\newtheorem{Thm}{Theorem}[section]
\newtheorem{Lem}{Lemma}[section]
\newtheorem{Pro}{Proposition}[section]
\newtheorem{Rem}{Remark}[section]
\numberwithin{equation}{section}
\newenvironment{proof}{\medskip\par\noindent{\bf Proof\/}:\quad}{\qquad
\raisebox{-0.5mm}{\rule{1.5mm}{1mm}}\vspace{6pt}}
\begin{document}
\title{A singularly perturbed fractional Kirchhoff problem}

\author{
{Vicen\c tiu D. R\u{a}dulescu}\thanks{Email: {\tt radulescu@inf.ucv.ro}}\\
\small Faculty of Applied Mathematics, AGH University of Science and Technology, 30-059 Krak\'{o}w, Poland\\
\small Department of Mathematics, University of Craiova, Craiova, 200585, Romania\\
{Zhipeng Yang}\thanks{Corresponding author. Email: {\tt yangzhipeng326@163.com}}\\
\small Mathematical Institute, Georg-August-University of G\"ottingen, G\"ottingen 37073, Germany\\
}

\date{} \maketitle

\textbf{Abstract.}
In this paper, we first establish the uniqueness and non-degeneracy of positive solutions to the fractional Kirchhoff problem
\begin{equation*}
\Big(a+b{\int_{\mathbb{R}^{N}}}|(-\Delta)^{\frac{s}{2}}u|^2dx\Big)(-\Delta)^su+mu=|u|^{p-2}u,\quad \text{in}\ \mathbb{R}^{N},
\end{equation*}
where $a,b,m>0$, $0<\frac{N}{4}<s<1$, $2<p<2^*_s=\frac{2N}{N-2s}$ and $(-\Delta )^s$ is the fractional Laplacian. Then, combining this non-degeneracy result and Lyapunov-Schmidt reduction method, we derive the existence of semiclassical solutions to the singularly perturbation problem
\begin{equation*}
\Big(\varepsilon^{2s}a+\varepsilon^{4s-N} b{\int_{\mathbb{R}^{N}}}|(-\Delta)^{\frac{s}{2}}u|^2dx\Big)(-\Delta)^su+V(x)u=|u|^{p-2}u,\quad \text{in}\ \mathbb{R}^{N},
\end{equation*}
for $\varepsilon> 0$ sufficiently small and a potential function $V$.

\vspace{6mm} \noindent{\bf Keywords:} Fractional Kirchhoff equations; Uniqueness and nondegeneracy; Lyapunov-Schmidt reduction.

\vspace{6mm} \noindent
{\bf 2010 Mathematics Subject Classification.} 35R11, 35A15, 47G20.


\section{Introduction and main results}

In this paper, we are concerned with the following singularly perturbed fractional Kirchhoff problem
\begin{equation}\label{eq1.1}
\Big(\varepsilon^{2s}a+\varepsilon^{4s-N} b{\int_{\mathbb{R}^{N}}}|(-\Delta)^{\frac{s}{2}}u|^2dx\Big)(-\Delta)^su+V(x)u=u^p,\quad \text{in}\ \mathbb{R}^{N},
\end{equation}
and the related unperturbed problem
\begin{equation}\label{eq1.2}
\Big(a+b{\int_{\mathbb{R}^{N}}}|(-\Delta)^{\frac{s}{2}}u|^2dx\Big)(-\Delta)^su+mu=u^p,\quad \text{in}\ \mathbb{R}^{N},
\end{equation}
where $a,b,m>0$, $\varepsilon>0$ is a parameter, $V: \mathbb{R}^{N} \rightarrow \mathbb{R}$ is a bounded continuous function,
$(-\Delta )^s$ is the fractional Laplacian and $p$ satisfies
\begin{equation*}
1<p<2_s^*-1=
\begin{cases}
\frac{N+2s}{N-2s}, &  0<s<\frac{N}{2}, \\
+\infty, &  s\geq \frac{N}{2},
\end{cases}
\end{equation*}
where $2^*_s$ is the standard fractional Sobolev critical exponent.
\par
If $s=1$, equation \eqref{eq1.2} reduces to the well known Kirchhoff type problem, which and their variants have been studied extensively in the literature. The equation that goes under the name of Kirchhoff equation was proposed in \cite{Kirchhoff1883} as a model for the transverse oscillation of a stretched string in the form
\begin{equation}\label{eq1.3}
\rho h \partial_{t t}^{2} u-\left(p_{0}+\frac{\mathcal{E}h}{2 L} \int_{0}^{L}\left|\partial_{x} u\right|^{2} d x\right) \partial_{x x}^{2} u=0,
\end{equation}
for $t \geq 0$ and $0<x<L$, where $u=u(t, x)$ is the lateral displacement at time $t$ and at position $x, \mathcal{E}$ is the Young modulus, $\rho$ is the mass density, $h$ is the cross section area, $L$ the length of the string, $p_{0}$ is the initial stress tension.
\par
Through the years, this model was generalized in several ways that can be collected in the form
\begin{equation*}
\partial_{t t}^{2} u-M\left(\|u\|^{2}\right) \Delta u=f(t, x, u), \quad x \in \Omega
\end{equation*}
for a suitable function $M:[0, \infty) \rightarrow \mathbb{R}$, called Kirchhoff function. The set $\Omega$ is a bounded domain of $\mathbb{R}^{N}$, and $\|u\|^{2}=\|\nabla u\|_{2}^{2}$ denotes the Dirichlet norm of $u$. The basic case corresponds to the choice
\begin{equation*}
M(t)=a+b t^{\gamma-1}, \quad a \geq 0, b \geq 0, \gamma \geq 1.
\end{equation*}
Problem \eqref{eq1.3} and its variants have been studied extensively in the literature. Bernstein obtains the global stability result in \cite{Bernstein1940BASUS}, which has been generalized to arbitrary dimension $N\geq 1$ by Poho\v{z}aev in \cite{Pohozaev1975MS}. We also point out that such problems may describe a process of some biological systems dependent on the average of itself, such as the density of population (see e.g. \cite{Arosio-Panizzi1996TAMS}). Many interesting work on Kirchhoff equations can be found in \cite{MR1161092,MR2270546,MR519648,MR1604955} and the references therein. From a mathematical point of view, the interest of studying Kirchhoff equations comes from the nonlocality of Kirchhoff type equations. For instance, if we take $M(t)=a+bt$, the consideration of the stationary analogue of Kirchhoff’s wave equation leads to elliptic problems
\begin{equation}\label{eq1.4}
\begin{cases}
-\Big(a+b\int_{\Omega}|\nabla u|^2dx\Big)\Delta u=f(x,u) &\text{in } \Omega, \\
u=0 &\text{on}\ \partial \Omega,
\end{cases}
\end{equation}
for some nonlinear functions $f(x,u)$. Note that the term $(\int_{\Omega}|\nabla u|^2dx)\Delta u$
depends not only on the pointwise value of $\Delta u$, but also on the integral of $|\nabla u|^2$ over the whole
domain. In this sense, Eqs. \eqref{eq1.1},  \eqref{eq1.2} and \eqref{eq1.4} are no longer the usual pointwise equalities. This
new feature brings new mathematical difficulties that make the study of Kirchhoff type equations
particularly interesting.
After Lions \cite{MR519648} introducing an abstract functional framework to this problem, this type of problem has received
much attention.
We refer to e.g. \cite{Chen-Kuo-Wu2011JDE,Perera-Zhang2006JDE,Shuai2015JDE} and to e.g. \cite{Deng-Peng-Shuai2015JFA,MR3218834,MR3360660,MR4293909,MR3412403,MR4305432,MR4021897,MR3200382} for mathematical researches on Kirchhoff type equations on bounded domains and in the whole space, respectively.
We also refer to \cite{MR3987384} for a recent survey of the results connected to this model.
\par
On the other hand, the interest in generalizing to the fractional case the model introduced by Kirchhoff does
not arise only for mathematical purposes. In fact, following the ideas of \cite{MR2675483} and the concept
of fractional perimeter, Fiscella and Valdinoci proposed in \cite{MR3120682} an equation describing the
behaviour of a string constrained at the extrema in which appears the fractional length of the rope. Recently, problem similar to \eqref{eq1.1} and \eqref{eq1.2} has been extensively investigated by many authors using different techniques and producing several relevant results  (see, e.g. \cite{MR4071927,MR4056169,MR4241293,MR4305431,MR4245633,MR3985380,Gu-Yang,MR4390819,R-Yang2,Yang2,MR4378099,Yang-Yu}).
\par
Besides, if $b=0$ in \eqref{eq1.2}, then we are led immediately to the following fractional Schr\"{o}dinger equation
\begin{equation}\label{eq1.5}
  a(-\Delta)^su+mu=u^p, \quad \text{in} \,\,\mathbb{R}^{N}.
\end{equation}
This equation is related to the standing wave solutions of the
time-independent fractional Schr\"{o}dinger equation
\begin{equation}\label{eq1.6}
 ih\frac{\partial \psi}{\partial t}=h^{2s}(-\Delta)^s\psi+V(x)\psi-f(x,|\psi|),\,\,\,\in\mathbb{R}^N\times\mathbb{R},
\end{equation}
where $h$ is the Plank constant and $V(x)$ is a potential function.
Eq. \eqref{eq1.6} was introduced by Laskin \cite{Laskin2000PLA,Laskin2002PR} as a fundamental equation of fractional quantum mechanics in the study of particles on stochastic fields modelled by L\'{e}vy process.
Since the fractional Laplacian $(-\Delta)^s$ is a nonlocal operator, one can not apply directly the
usual techniques dealing with the classical Laplacian operator.
Therefore, some ideas are proposed recently. In
\cite{Caffarelli-Silvestre2007PDE}, Caffarelli and Silvestre
expressed the operator $(-\Delta)^s$ on $\mathbb{R}^N$ as a generalized
elliptic BVP with local differential operators defined on the upper
half-space $\mathbb{R}^{N+1}_+=\{(t,x):t>0,x\in\mathbb{R}^N\}$. By means of
Lyapunov-Schmidt reduction, concentration phenomenon of solutions
was considered independently in \cite{Chen-Zheng2013CPAA} and
\cite{Davila-delPino-Wei2014JDE}. For more interesting results
concerning with the existence, multiplicity and concentration of
solutions for the fractional Laplacian equation, we refer reader to
\cite{Nezza-Palatucci-Valdinoci2012BSM,Valdinocibook2017} and the references therein.
\par
Uniqueness of ground states of nonlocal equations similar to Eq. \eqref{eq1.5} is of fundamental importance in the stability and blow-up analysis for solitary wave solutions of nonlinear dispersive equations, for example, of the generalized Benjamin-Ono equation. In contrast to the classical limiting case when $s=1$, in which standard ODE techniques are applicable, uniqueness of ground state solutions to Eq. \eqref{eq1.5} is a really difficult problem. In the case that $s=\frac{1}{2}$ and $N=1$, Amick and Toland \cite{MR1111746}, they obtained the uniqueness result for solitary waves of the Benjamin-Ono equation. After that, Lenzmann \cite{MR2561169} obtained the uniqueness of ground states for the pseudorelativistic Hartree equation in $3$-dimension. In \cite{MR3070568}, Frank and Lenzmann extends the results in \cite{MR1111746} to the case that $s \in(0,1)$ and $N=1$ with completely new methods. For the high dimensional case, Fall and Valdinoci \cite{MR3207007} established the uniqueness and nondegeneracy of ground state solutions of \eqref{eq1.5} when $s \in(0,1)$ is sufficiently close to $1$ and $p$ is subcritical. In their striking paper \cite{MR3530361}, Frank, Lenzmann and Silvestre solved the problem completely, and they showed that the ground state solutions of \eqref{eq1.5} with $a=m=1$ is unique for arbitrary space dimensions $N \geq 1$ and all admissible and subcritical exponents $p>0 .$ Moreover, they also established the nondegeneracy of ground state solutions in the sense that the linearized operator
\begin{equation*}
L_{+}=(-\Delta)^{s}+1+p|Q|^{p-1}
\end{equation*}
is nondegenerate; i.e., its kernel is given by
\begin{equation*}
\operatorname{ker} L_{+}=\operatorname{span}\left\{\partial_{x_{1}} Q, \cdots, \partial_{x_{N}} Q\right\}
\end{equation*}
where $Q \in H^{s}\left(\mathbb{R}^{N}\right)$ is a solution of \eqref{eq1.5} with $a=m=1$. For a systematical research on applications of nondegeneracy of ground states to perturbation problems, we refer to Ambrosetti and Malchiodi \cite{Ambrosetti-Malchiodibook} and the references therein.
\par
From the viewpoint of calculus of variation, the fractional Kirchhoff problem \eqref{eq1.2} is much more complex and difficult than the
classical fractional Laplacian equation \eqref{eq1.5} as the appearance of the term $b\big({\int_{\mathbb{R}^{N}}}|(-\Delta)^{\frac{s}{2}}u|^2dx\big)(-\Delta )^su$, which is of order four.
This fact leads to difficulty in obtaining the boundedness of the $(PS)$ sequence for the corresponding energy functional if $p\leq3$.
In this paper, we borrow some ideas from \cite{Wu-Gu,MR3862713,MR4161398} to overcome
 the restriction to $1<p\leq 3$ and will obtain the  uniqueness, nondegeneracy of ground state solution for
 problem \eqref{eq1.2}. Then, by Lyapunov-Schmidt reduction method, we also obtain some existence results for the singularly perturbation problem. One of the main idea is based on the scaling technique which allows us to find a relation between solutions of  \eqref{eq1.2} and the following equation
\begin{equation}\label{eq1.7}
(-\Delta)^{s} Q+Q=Q^{p} , \quad \text { in } \mathbb{R}^{N}
\end{equation}
where $0<s<1$ and $1<p<2_{s}^{*}-1$.
\par
In order to state our main results, we recall some preliminary
results for the fractional Laplacian. For $0<s<1$, the fractional
Sobolev space $H^s(\mathbb{R}^N)$ is defined by
\begin{equation*}
H^{s}(\mathbb{R}^N)=\bigg\{u\in L^2(\mathbb{R}^N):\frac{u(x)-u(y)}{|x-y|^{\frac{N}{2}+s}}\in L^2({\mathbb{R}^{N}\times\mathbb{R}^{N}})\bigg\},
\end{equation*}
endowed with the natural norm
\begin{equation*}
  \|u\|^2=\int_{\mathbb{R}^N}|u|^2dx+\int\int_{\mathbb{R}^N\times\mathbb{R}^N}\frac{|u(x)-u(y)|^2}{|x-y|^{N+2s}}dxdy.
\end{equation*}
\par
The fractional Laplacian $(-\Delta)^s$ is the pseudo-differential
operator defined by
\begin{equation*}
\mathcal{F}((-\Delta)^su)(\xi)=|\xi|^{2s}\mathcal{F}(u)(\xi),\ \ \xi\in \mathbb{R}^N,
\end{equation*}
where $\mathcal{F}$ denotes the Fourier transform. It is also given by
\begin{equation*}
(-\Delta)^{s}u(x)=-\frac{1}{2}C(N,s)\int_{\mathbb{R}^N}\frac{u(x+y)+u(x-y)-2u(x)}{|y|^{N+2s}}dy,
\end{equation*}
where
\begin{equation*}
C(N,s)=\bigg(\int_{\mathbb{R}^N}\frac{(1-cos\xi_1)}{|\xi|^{N+2s}}d\xi\bigg)^{-1},\ \xi=(\xi_1,\xi_2,...\xi_N).
\end{equation*}
From \cite{Nezza-Palatucci-Valdinoci2012BSM}, we have
\begin{equation*}
\|(-\Delta)^{\frac{s}{2}}u\|^2_2=\int_{\mathbb{R}^N}|\xi|^{2s}|\mathcal{F}(u)|^2d\xi=\frac{1}{2}C(N,s)
\int_{\mathbb{R}^N\times\mathbb{R}^N}\frac{|u(x)-u(y)|^2}{|x-y|^{N+2s}}dxdy
\end{equation*}
and the fractional Gagliardo-Nirenberg-Sobolve inequality
\begin{equation}\label{eq1.8}
\int_{\mathbb{R}^N}|u|^{p+1}dx\leq\mathcal{S}\Big(\int_{\mathbb{R}^N}|(-\Delta
)^{\frac{s}{2}}u|^2dx\Big)^{\frac{N(p-1)}{4s}}
\Big(\int_{\mathbb{R}^N}|u|^2dx\Big)^{\frac{p-1}{4s}(2s-N)+1},
\end{equation}
where $\mathcal{S}>0$ is the best constant. It follows from
\eqref{eq1.8} that
\begin{equation}\label{eq1.9}
J(u)=\frac{\mathcal{S}\Big(\int_{\mathbb{R}^N}|(-\Delta)^{\frac{s}{2}}u|^2dx\Big)^{\frac{N(p-1)}{4s}}\Big(\int_{\mathbb{R}^N}|u|^2dx\Big)^{\frac{p-1}{4s}(2s-N)+1}}
{\int_{\mathbb{R}^N}|u|^{p+1}dx}>0.
\end{equation}
It has been pointed out that in \cite{MR3530361} that $\mathcal{S}=\inf\limits_{u\in H^s(\mathbb{R}^N)\backslash\{0\}}J(u)$ is actually
attained. Here we say that $Q$ is a ground state solution if a solution of \eqref{eq1.7} is also a minimizer for $J(u)$.
\par
Despite all the results cited above, to the best of our knowledge, in literature there are still no articles summarizing the situation of different kind of solutions at different level of energy for the fractional Kirchhoff problem. We summarize our main results as follows.

\begin{Thm}\label{Thm1.1}Let $a,b,m>0$. Assume that $\frac{N}{4}<s<1$ and $1<p<2_s^*-1$. Then
equation \eqref{eq1.2} has a ground state solution $U\in H^s(\mathbb{R}^N)$,
\begin{itemize}
  \item[$(i)$] $U>0$ belongs to $C^\infty(\mathbb{R}^N)\cap H^{2s+1}(\mathbb{R}^N)$;
  \item[$(ii)$] there exist some $x_0\in \mathbb{R}^N$ such that
  $U(\cdot-x_0)$ is radial and strictly decreasing in $r=|x-x_0|$;
  \item[$(iii)$] there exist constants $C_1,C_2>0$ such that
\begin{equation*}
\frac{C_1}{1+|x|^{N+2s}}\leq U(x)\leq \frac{C_2}{1+|x|^{N+2s}},\quad \forall \ x\in \mathbb{R}^N.
\end{equation*}
  \end{itemize}
\end{Thm}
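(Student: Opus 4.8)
The plan is to transplant the well-understood description of the ground state of the limiting equation \eqref{eq1.7} to \eqref{eq1.2} by means of an explicit rescaling. By \cite{MR3530361}, together with the fact recalled above that $\mathcal S$ is attained, equation \eqref{eq1.7} admits a positive ground state $Q$, unique up to translations, which is radial and strictly decreasing about its center, belongs to $C^\infty(\mathbb{R}^N)\cap H^{2s+1}(\mathbb{R}^N)$ --- the regularity following from the bootstrap $(-\Delta)^sQ=Q^p-Q\in L^2\Rightarrow Q\in H^{2s}\Rightarrow\cdots$, which closes because $2s>N/2$ (as $s>N/4$) makes $H^{2s}(\mathbb{R}^N)$ a multiplicative algebra --- and obeys the two-sided estimate $c(1+|x|^{N+2s})^{-1}\leq Q(x)\leq C(1+|x|^{N+2s})^{-1}$. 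Each of the properties $(i)$--$(iii)$ holds for $Q$; it remains to carry them over to \eqref{eq1.2}.

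First I would look for $U$ in the form $U(x)=\lambda Q(\mu x)$ with $\lambda,\mu>0$ to be determined. Using $(-\Delta)^sU(x)=\lambda\mu^{2s}\big((-\Delta)^sQ\big)(\mu x)$, the identity $\int_{\mathbb{R}^N}|(-\Delta)^{s/2}U|^2\,dx=\lambda^2\mu^{2s-N}T$ with $T:=\int_{\mathbb{R}^N}|(-\Delta)^{s/2}Q|^2\,dx$, and the equation $(-\Delta)^sQ=Q^p-Q$, one finds that $U$ solves \eqref{eq1.2} exactly when
\begin{equation*}
\lambda^{p-1}=m\qquad\text{and}\qquad \Psi(\mu):=a\mu^{2s}+bT\lambda^2\mu^{4s-N}=m .
\end{equation*}
Thus $\lambda=m^{1/(p-1)}$ is forced, and since $s>N/4$ gives $4s-N>0$, the function $\Psi$ is continuous and strictly increasing on $(0,\infty)$ with $\Psi(0^+)=0$ and $\Psi(+\infty)=+\infty$, so there is a unique admissible $\mu>0$. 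This is the only point at which the hypothesis $N/4<s$ enters: for $s\leq N/4$ the equation $\Psi(\mu)=m$ need not be solvable. With these $\lambda,\mu$, the function $U=\lambda Q(\mu\cdot)\in H^s(\mathbb{R}^N)$ solves \eqref{eq1.2}.

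The qualitative properties of $U$ are now inherited from those of $Q$: $U>0$ and $U\in C^\infty(\mathbb{R}^N)$ because $Q$ is; $U\in H^{2s+1}(\mathbb{R}^N)$ because every Sobolev norm of $U$ is a fixed constant (depending on $\lambda,\mu$) times the corresponding norm of $Q$; $U(\cdot-x_0)$ is radial and strictly decreasing in $|x-x_0|$, with $x_0$ the rescaled center of $Q$ and using $\mu>0$; and the bound for $Q$ yields $\frac{C_1}{1+|x|^{N+2s}}\leq U(x)\leq\frac{C_2}{1+|x|^{N+2s}}$ after absorbing $\lambda$ and the dilation into the constants, the bounded range of $x$ being handled by continuity and strict positivity of $U$. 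To see that $U$ is a \emph{ground state} of \eqref{eq1.2}, i.e.\ a least-energy nontrivial solution of the functional $I(u)=\frac a2\|(-\Delta)^{s/2}u\|_2^2+\frac b4\|(-\Delta)^{s/2}u\|_2^4+\frac m2\|u\|_2^2-\frac1{p+1}\|u\|_{p+1}^{p+1}$, I would apply the same rescaling in the opposite direction: a positive solution $u$ of \eqref{eq1.2} is sent to the solution $v(x)=m^{-1/(p-1)}u(\beta x)$ of \eqref{eq1.7} with $\beta=\big((a+b\|(-\Delta)^{s/2}u\|_2^2)/m\big)^{1/(2s)}$ (the parameter $\beta$ again being uniquely determined, because $s>N/4$), and this is a bijection between the sets of positive solutions that carries $U$ to $Q$. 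Expressing $I(u)$ and the energy $E_0(v):=\frac12\|(-\Delta)^{s/2}v\|_2^2+\frac12\|v\|_2^2-\frac1{p+1}\|v\|_{p+1}^{p+1}$ of \eqref{eq1.7} on solutions by means of the Nehari and Pohozaev identities, one checks that $I(u)$ is a strictly increasing function of $E_0(v)$ along this correspondence; since $Q$ minimizes $E_0$ among solutions of \eqref{eq1.7} by \cite{MR3530361}, $U$ minimizes $I$ among solutions of \eqref{eq1.2}.

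I expect the last step --- identifying $U$ as a genuine ground state --- to be the main obstacle. Reducing \eqref{eq1.2} to \eqref{eq1.7} by scaling is attractive precisely because it sidesteps the usual variational difficulties for the Kirchhoff functional, but it forces one to control the nonlocal quartic term $\frac b4\|(-\Delta)^{s/2}u\|_2^4$: on the Nehari manifold the Nehari and Pohozaev identities show that this term enters $I$ with the sign of $p-3$, so for $1<p\leq 3$ the standard arguments (boundedness of Palais--Smale sequences, monotonicity of the fibering maps) fail and have to be replaced by the finer estimates of \cite{Wu-Gu,MR3862713,MR4161398}. By contrast, the construction of $U$ and the transplantation of $(i)$--$(iii)$ are routine once the deep uniqueness, regularity and decay results for \eqref{eq1.7} are available.
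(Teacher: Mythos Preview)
Your construction of $U$ by rescaling the Frank--Lenzmann--Silvestre ground state $Q$ is precisely the paper's approach; your parametrisation via $\mu$ solving $a\mu^{2s}+bT\lambda^{2}\mu^{4s-N}=m$ is equivalent to the paper's parametrisation via $\mathcal E_{0}=a+b\|(-\Delta)^{s/2}U\|_{2}^{2}$ solving \eqref{eq2.1} (indeed $\mathcal E_{0}=m\mu^{-2s}$), and your strictly increasing $\Psi$ even yields uniqueness of the scaling parameter for free, which the paper defers to Section~3.1. The only substantive divergence is in verifying that $U$ is a \emph{ground state}. In the paper, ``ground state'' is defined as a solution that also minimises the Gagliardo--Nirenberg quotient $J$ of \eqref{eq1.9}; since $J$ is invariant under $u\mapsto\lambda u(\mu\,\cdot)$, one has $J(U)=J(Q)$ immediately and the ground-state property transfers at no cost. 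Your route through the Kirchhoff energy $I$, with its Nehari/Pohozaev comparison and the delicacies you anticipate for $1<p\le 3$, is therefore unnecessary here --- the ``main obstacle'' you flag simply does not arise under the paper's definition.
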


\begin{Thm}\label{Thm1.2}Under the assumptions of Theorem \ref{Thm1.1},  the ground state solution $U$ of \eqref{eq1.2} is unique up to
 translation. Moreover, $U$ is nondegenerate in $H^s(\mathbb{R}^N)$ in the sense that there holds
$$\ker L_+=span\{\partial_{x_1}U, \partial_{x_2}U,\cdots,  \partial_{x_N}U\},$$
where $L_+$ is defined as
\begin{equation*}
  L_+\varphi=\Big(a+b{\int_{\mathbb{R}^{N}}}|(-\Delta
)^{\frac{s}{2}}U|^2dx\Big)(-\Delta
)^s\varphi+m\varphi-pU^{p-1}\varphi+2b\Big({\int_{\mathbb{R}^{N}}}(-\Delta
)^{\frac{s}{2}}U(-\Delta )^{\frac{s}{2}}\varphi dx\Big)(-\Delta )^sU
\end{equation*}
acting on $L^2(\mathbb{R}^N)$ with domain $H^s(\mathbb{R}^N)$.
\end{Thm}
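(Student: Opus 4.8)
The whole argument reduces \eqref{eq1.2} to the nonlocal Schr\"odinger equation \eqref{eq1.7}, whose ground state $Q$ is unique up to translation and nondegenerate by the theorem of Frank, Lenzmann and Silvestre \cite{MR3530361}. Let $U$ be a ground state of \eqref{eq1.2} and set $b_0:=b\int_{\mathbb{R}^{N}}|(-\Delta)^{s/2}U|^{2}\,dx>0$; then $U$ solves the constant-coefficient equation $(a+b_0)(-\Delta)^{s}U+mU=U^{p}$, so the rescaling $U(x)=m^{1/(p-1)}W(\beta x)$ with $\beta:=\big(m/(a+b_0)\big)^{1/(2s)}$ turns $U$ into a positive $H^{s}$-solution $W$ of \eqref{eq1.7}. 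Using the variational characterisation of ground states together with this scaling, one checks that $W$ is in fact a ground state of \eqref{eq1.7}, whence $W=Q(\cdot-y)$ for some $y\in\mathbb{R}^{N}$. Writing $A:=\|(-\Delta)^{s/2}Q\|_{2}^{2}$ and inserting $W=Q(\cdot-y)$ into the definition of $b_0$ yields the scalar equation $a\beta^{2s}+b\,m^{2/(p-1)}A\,\beta^{4s-N}=m$. Since $\frac{N}{4}<s$ forces $4s-N>0$, the left-hand side is a strictly increasing bijection of $(0,\infty)$ onto $(0,\infty)$, so $\beta$ — hence $b_0$, hence $U$ — is uniquely determined up to the translation $y$. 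This gives the uniqueness assertion.

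For the nondegeneracy we may assume $U(x)=m^{1/(p-1)}Q(\beta x)$. Differentiating \eqref{eq1.2} in $x_i$ (the coefficient $b_0$ being constant) and using $\int_{\mathbb{R}^{N}}(-\Delta)^{s}U\,\partial_{x_i}U\,dx=0$, which makes the last, nonlocal term of $L_+$ vanish, shows $\partial_{x_i}U\in\ker L_+$ for every $i$. Conversely, let $\varphi\in\ker L_+$ and write $\varphi(x)=m^{1/(p-1)}\psi(\beta x)$. A direct computation, using $(a+b_0)\beta^{2s}=m$, reduces $L_+\varphi=0$ to $\mathcal{L}_+\psi=-\kappa\,\langle(-\Delta)^{s}Q,\psi\rangle\,(-\Delta)^{s}Q$, where $\mathcal{L}_+:=(-\Delta)^{s}+1-pQ^{p-1}$ is the linearised operator of \eqref{eq1.7} at $Q$ and $\kappa:=2b\,m^{(3-p)/(p-1)}\beta^{4s-N}>0$. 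If $c:=\langle(-\Delta)^{s}Q,\psi\rangle=0$ then $\mathcal{L}_+\psi=0$, and the nondegeneracy of $\mathcal{L}_+$ from \cite{MR3530361} gives $\psi\in\operatorname{span}\{\partial_{x_i}Q\}$, so $\varphi\in\operatorname{span}\{\partial_{x_i}U\}$.

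It remains to exclude $c\neq0$. Since $\langle(-\Delta)^{s}Q,\partial_{x_i}Q\rangle=0$ for all $i$, standard spectral theory (with $-pQ^{p-1}$ a relatively compact perturbation of $(-\Delta)^{s}+1$) makes $\mathcal{L}_+$ boundedly invertible on $(\ker\mathcal{L}_+)^{\perp}$, and $(-\Delta)^{s}Q$ lies in its range; moreover the scaling identities $(-\Delta)^{s}(x\cdot\nabla Q)=2s(-\Delta)^{s}Q+x\cdot\nabla\big((-\Delta)^{s}Q\big)$ and $pQ^{p-1}(x\cdot\nabla Q)=x\cdot\nabla(Q^{p})$ give $\mathcal{L}_+(x\cdot\nabla Q)=2s(-\Delta)^{s}Q$, and the radial function $x\cdot\nabla Q$ is orthogonal to $\ker\mathcal{L}_+$, so $\mathcal{L}_+^{-1}\big[(-\Delta)^{s}Q\big]=\frac{1}{2s}\,x\cdot\nabla Q$. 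Writing $\psi=-\frac{\kappa c}{2s}\,x\cdot\nabla Q+\sum_{i}d_i\partial_{x_i}Q$, pairing with $(-\Delta)^{s}Q$, and using the Pohozaev-type identity $\langle(-\Delta)^{s}Q,x\cdot\nabla Q\rangle=\frac{2s-N}{2}A$, one obtains after dividing by $c\neq0$ the relation $(N-2s)(m-a\beta^{2s})=2sm$. But $0<m-a\beta^{2s}<m$ (because $b_0>0$) and $N-2s<2s$ (again since $s>\frac{N}{4}$), so $(N-2s)(m-a\beta^{2s})<2sm$: a contradiction. Therefore $c=0$, and $\ker L_+=\operatorname{span}\{\partial_{x_1}U,\dots,\partial_{x_N}U\}$.

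The main obstacle is the very first reduction: one must prove that the rescaled ground state $W$ is genuinely a ground state of \eqref{eq1.7}, so that \cite{MR3530361} is applicable. For $1<p\le3$ this is delicate, since the Kirchhoff energy does not by itself control the minimising or Palais--Smale sequences; this is exactly where the ideas of \cite{Wu-Gu,MR3862713,MR4161398} and the decay and regularity in Theorem \ref{Thm1.1}(ii)--(iii) are used. The remaining ingredients — self-adjointness and Fredholm properties of $L_+$ and $\mathcal{L}_+$, the spectral and Pohozaev identities, and the $L^{2}$-integrability of $x\cdot\nabla Q$ — are routine once the sharp decay of $Q$ furnished by \cite{MR3530361} is available.
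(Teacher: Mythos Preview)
Your proof is correct, and in fact organises the nondegeneracy argument more efficiently than the paper does. The uniqueness part is essentially the paper's argument reparametrised: the paper works with $\mathcal{E}_0=a+b_0=m\beta^{-2s}$ and studies $f(\mathcal{E})=\mathcal{E}-a-c\,\mathcal{E}^{(N-2s)/(2s)}$, whereas your formulation in $\beta$ makes the strict monotonicity transparent because both exponents $2s$ and $4s-N$ are positive.

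For nondegeneracy the two approaches diverge. The paper first proves $(\ker L_+)\cap L^2_{\mathrm{rad}}=\{0\}$ by the same Pohozaev computation you carry out, but restricted to radial functions, where only the invertibility of $T_+$ on $L^2_{\mathrm{rad}}$ is used; it then handles the non-radial directions by decomposing $L_+$ in spherical harmonics and invoking a Perron--Frobenius property for each angular block $L_{+,l}$. Your argument bypasses this entirely: since Frank--Lenzmann--Silvestre already give the \emph{full} kernel $\ker\mathcal{L}_+=\operatorname{span}\{\partial_{x_i}Q\}$, the Fredholm alternative applies on all of $L^2$, not merely on $L^2_{\mathrm{rad}}$, so the single identity $\mathcal{L}_+(x\cdot\nabla Q)=2s(-\Delta)^sQ$ together with the pairing computation settles every direction at once. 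This is a genuine simplification; the paper's spherical-harmonic machinery (including Perron--Frobenius for the nonlocal blocks) is not needed.

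One correction to your commentary: the step you flag as the ``main obstacle'' --- showing that the rescaled $W$ is a ground state of \eqref{eq1.7} --- is in fact trivial in the paper's framework. Ground states are \emph{defined} as minimisers of the Gagliardo--Nirenberg--Sobolev quotient $J(u)$ in \eqref{eq1.9}, and $J$ is invariant under the rescaling $u\mapsto \mu\,u(\lambda\,\cdot)$; hence $J(W)=J(U)=J(Q)$ immediately. No control of Palais--Smale sequences, and none of the $1<p\le 3$ difficulties you mention, enter at this point; those remarks concern the \emph{existence} proof (Theorem \ref{Thm1.1}), not the uniqueness/nondegeneracy reduction.
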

\par
By Theorem \ref{Thm1.2}, it is now possible that we apply Lyapunov-Schmidt reduction to study the perturbed fractional Kirchhoff equation \eqref{eq1.1}. We want to look for solutions of \eqref{eq1.1} in the Sobolev space $H^s(\mathbb{R}^N)$ for sufficiently small $\varepsilon$, which named semiclassical solutions. We also call such derived solutions as concentrating solutions since they will concentrate at certain point of the potential function $V$.
Moreover, it is expected that this approach can deal with problem \eqref{eq1.1} for all $1<p<2^*_s-1$, in a unified way.
To state our following results, let introduce some notations that will be used throughout the paper. For $\varepsilon>0$ and $y=\left(y_{1}, y_{2},\cdots y_{N}\right) \in \mathbb{R}^{N}$, write
\begin{equation*}
U_{\varepsilon, y}(x)=U\left(\frac{x-y}{ \varepsilon}\right), \quad x \in \mathbb{R}^{N}.
\end{equation*}
Assume that $V: \mathbb{R}^{N} \rightarrow \mathbb{R}$ satisfies the following conditions:
\begin{itemize}
  \item [$(V_1)$] $V$ is a bounded continuous function with $\inf\limits_{x \in \mathbb{R}^{N}} V>0$;
  \item [$(V_2)$] There exist $x_{0} \in \mathbb{R}^{N}$ and $r_{0}>0$ such that
\begin{equation*}
V\left(x_{0}\right)<V(x) \quad \text { for } 0<\left|x-x_{0}\right|<r_{0}
\end{equation*}
and $V \in C^{\alpha}\left(\bar{B}_{r_{0}}\left(x_{0}\right)\right)$ for some $0<\alpha<\frac{N+4 s}{2}$. That is, $V$ is of $\alpha$-th order H\"{o}lder continuity around $x_{0}$.
\end{itemize}
The assumption $(V_1)$ allows us to introduce the inner products
\begin{equation*}
\langle u, v\rangle_{\varepsilon}=\int_{\mathbb{R}^{N}}\left(\varepsilon^{2s} a (-\Delta)^{\frac{s}{2}} u \cdot(-\Delta)^{\frac{s}{2}} v+V(x) u v\right)dx
\end{equation*}
for $u, v \in H^{s}\left(\mathbb{R}^{N}\right)$. We also write
\begin{equation*}
H_{\varepsilon}=\left\{u \in H^{s}\left(\mathbb{R}^{N}\right):\|u\|_{\varepsilon}=\langle u, u\rangle_{\varepsilon}^{\frac{1}{2}}<\infty\right\}.
\end{equation*}
Now we state the existence result as follows.
\begin{Thm}\label{Thm1.3}
Under the assumptions of Theorem \ref{Thm1.1} and assume that $V$ satisfies $(V_1)$ and $(V_2)$. Then there exists $\varepsilon_{0}>0$ such that for all $\varepsilon \in\left(0, \varepsilon_{0}\right)$, problem \eqref{eq1.1} has a solution $u_{\varepsilon}$ of the form
\begin{equation*}
u_{\varepsilon}=U\left(\frac{x-y_{\varepsilon}}{\varepsilon}\right)+\varphi_{\varepsilon}
\end{equation*}
with $\varphi_{\varepsilon} \in H_{\varepsilon}$, satisfying
\begin{equation*}
\begin{aligned}
y_{\varepsilon} & \rightarrow x_{0}, \\
\left\|\varphi_{\varepsilon}\right\|_{\varepsilon} &=o\left(\varepsilon^{\frac{N}{2}}\right) \\
\end{aligned}
\end{equation*}
as $\varepsilon \rightarrow 0$ .
\end{Thm}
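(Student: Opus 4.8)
\medskip\noindent\textbf{Sketch of the proof of Theorem \ref{Thm1.3}.}\quad
The plan is to perform a Lyapunov--Schmidt reduction along the manifold of rescaled ground states of \eqref{eq1.2}, using Theorem \ref{Thm1.2} to provide the needed invertibility. Fix $m=V(x_0)$ and let $U$ be the unique positive ground state of \eqref{eq1.2} with this value of $m$, supplied by Theorems \ref{Thm1.1}--\ref{Thm1.2}; via the scaling relation between \eqref{eq1.2} and \eqref{eq1.7} one transfers to $U$ all the relevant information: positivity, smoothness, the decay $C_1(1+|x|^{N+2s})^{-1}\le U(x)\le C_2(1+|x|^{N+2s})^{-1}$, and above all the nondegeneracy $\ker L_+=\operatorname{span}\{\partial_{x_1}U,\dots,\partial_{x_N}U\}$. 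On $H_\varepsilon$ consider the energy
\[
I_\varepsilon(u)=\frac{\varepsilon^{2s}a}{2}\int_{\R^N}|(-\Delta)^{s/2}u|^2+\frac{\varepsilon^{4s-N}b}{4}\Big(\int_{\R^N}|(-\Delta)^{s/2}u|^2\Big)^2+\frac12\int_{\R^N}V(x)u^2-\frac1{p+1}\int_{\R^N}|u|^{p+1},
\]
which is of class $C^2$ and whose nontrivial critical points solve \eqref{eq1.1}. The powers $\varepsilon^{2s}$ and $\varepsilon^{4s-N}$ are chosen precisely so that, with $U_{\varepsilon,y}(x)=U((x-y)/\varepsilon)$, a direct change of variables gives $\|U_{\varepsilon,y}\|_\varepsilon^2\sim\varepsilon^N$ and
\[
I_\varepsilon(U_{\varepsilon,y})=\varepsilon^N c_0+\tfrac12\varepsilon^N\big(V(y)-V(x_0)\big)\|U\|_{L^2}^2+O(\varepsilon^{N+\alpha})
\]
uniformly for $y$ in a small closed ball $\bar B_r(x_0)$ compactly contained in $B_{r_0}(x_0)$, where $c_0$ is the ground-state energy of \eqref{eq1.2} with $m=V(x_0)$; the error uses $(V_2)$ and the decay of $U$. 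Set $Z_{\varepsilon,y}=\operatorname{span}\{\partial_{y_i}U_{\varepsilon,y}:1\le i\le N\}$, split $H_\varepsilon=Z_{\varepsilon,y}\oplus Z_{\varepsilon,y}^{\perp}$ with orthogonal projections $P_{\varepsilon,y},P_{\varepsilon,y}^{\perp}$, and look for a solution of the form $u=U_{\varepsilon,y}+\varphi$ with $\varphi\in Z_{\varepsilon,y}^{\perp}$; then $I_\varepsilon'(u)=0$ is equivalent to the auxiliary equation $P_{\varepsilon,y}^{\perp}I_\varepsilon'(U_{\varepsilon,y}+\varphi)=0$ together with the bifurcation equation $P_{\varepsilon,y}I_\varepsilon'(U_{\varepsilon,y}+\varphi)=0$.

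The next step is to solve the auxiliary equation for each fixed $y\in\bar B_r(x_0)$. Since $U$ solves \eqref{eq1.2} with $m=V(x_0)$, the residual of the ansatz is, after identification via the Riesz isomorphism, $I_\varepsilon'(U_{\varepsilon,y})=(V(\cdot)-V(x_0))U_{\varepsilon,y}$, and $(V_1)$--$(V_2)$ together with the decay of $U$ yield
\[
\|P_{\varepsilon,y}^{\perp}I_\varepsilon'(U_{\varepsilon,y})\|_{\varepsilon}\le C\varepsilon^{N/2}\big(|V(y)-V(x_0)|+\varepsilon^\alpha\big);
\]
here the hypothesis $\alpha<\frac{N+4s}{2}$ is exactly what guarantees $\int_{\R^N}|z|^{2\alpha}U(z)^2\,dz<\infty$. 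Let $\mathcal L_{\varepsilon,y}$ be the linearization of $v\mapsto P_{\varepsilon,y}^{\perp}I_\varepsilon'(v)$ at $U_{\varepsilon,y}$, acting on $Z_{\varepsilon,y}^{\perp}$. After the substitution $x=y+\varepsilon z$, $\mathcal L_{\varepsilon,y}$ is a perturbation of the operator $L_+$ of Theorem \ref{Thm1.2} — the perturbation consisting of the slowly varying term $V(y+\varepsilon z)-V(x_0)\to0$ and lower-order contributions of the nonlocal Kirchhoff terms — so from the nondegeneracy of $L_+$ on $(\ker L_+)^{\perp}$ together with a compactness/contradiction argument one obtains that $\mathcal L_{\varepsilon,y}$ is invertible with $\|\mathcal L_{\varepsilon,y}^{-1}\|$ bounded uniformly in small $\varepsilon$ and in $y\in\bar B_r(x_0)$. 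A contraction-mapping argument for the fixed-point equation $\varphi=-\mathcal L_{\varepsilon,y}^{-1}P_{\varepsilon,y}^{\perp}\big(I_\varepsilon'(U_{\varepsilon,y})+\mathcal N_{\varepsilon,y}(\varphi)\big)$, where $\mathcal N_{\varepsilon,y}(\varphi)$ collects the superlinear remainder of the nonlinearity and of the Kirchhoff term, then produces, for $\varepsilon$ small, a unique small solution $\varphi=\varphi_{\varepsilon,y}\in Z_{\varepsilon,y}^{\perp}$ with
\[
\|\varphi_{\varepsilon,y}\|_\varepsilon\le C\varepsilon^{N/2}\big(|V(y)-V(x_0)|+\varepsilon^\alpha\big),
\]
and with $y\mapsto\varphi_{\varepsilon,y}$ of class $C^1$ by the implicit function theorem.

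It remains to solve the bifurcation equation, i.e.\ to find a critical point of the reduced functional $\Phi_\varepsilon(y):=I_\varepsilon(U_{\varepsilon,y}+\varphi_{\varepsilon,y})$ on $\bar B_r(x_0)$; the standard reduction lemma (using $I_\varepsilon'(U_{\varepsilon,y}+\varphi_{\varepsilon,y})\in Z_{\varepsilon,y}$, the $C^1$ dependence of $\varphi_{\varepsilon,y}$, and the linear independence of the $\partial_{y_i}U_{\varepsilon,y}$) shows that any interior critical point $y_\varepsilon$ of $\Phi_\varepsilon$ yields a genuine solution $u_\varepsilon=U_{\varepsilon,y_\varepsilon}+\varphi_{\varepsilon,y_\varepsilon}$ of \eqref{eq1.1}. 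Since $\varphi_{\varepsilon,y}\in Z_{\varepsilon,y}^{\perp}$ forces $\langle I_\varepsilon'(U_{\varepsilon,y}),\varphi_{\varepsilon,y}\rangle_{\varepsilon}=O(\|\varphi_{\varepsilon,y}\|_\varepsilon^2)$, a Taylor expansion combined with the estimates above gives, uniformly on $\bar B_r(x_0)$,
\[
\Phi_\varepsilon(y)=\varepsilon^N c_0+\tfrac12\varepsilon^N\big(V(y)-V(x_0)\big)\|U\|_{L^2}^2+O\big(\varepsilon^N(|V(y)-V(x_0)|+\varepsilon^\alpha)^2\big)+O(\varepsilon^{N+\alpha}).
\]
Shrinking $r$ so that $\sup_{\bar B_r(x_0)}|V-V(x_0)|$ is small relative to $\|U\|_{L^2}^2$, the quadratic error is absorbed and one obtains $\Phi_\varepsilon(y)\ge\varepsilon^N c_0+\tfrac14\varepsilon^N(V(y)-V(x_0))\|U\|_{L^2}^2-C\varepsilon^{N+\alpha}$ on $\bar B_r(x_0)$, while $\Phi_\varepsilon(x_0)\le\varepsilon^N c_0+C\varepsilon^{N+\alpha}$. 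By $(V_2)$ one has $\min_{\partial B_r(x_0)}(V-V(x_0))>0$, so for $\varepsilon$ small $\min_{\bar B_r(x_0)}\Phi_\varepsilon<\min_{\partial B_r(x_0)}\Phi_\varepsilon$; hence $\Phi_\varepsilon$ attains its minimum over $\bar B_r(x_0)$ at an interior point $y_\varepsilon$, which is therefore a critical point. Running the same inequalities backwards gives $V(y_\varepsilon)-V(x_0)=O(\varepsilon^\alpha)$, whence $y_\varepsilon\to x_0$ because $x_0$ is a strict local minimum of $V$, and finally $\|\varphi_{\varepsilon,y_\varepsilon}\|_\varepsilon\le C\varepsilon^{N/2}(|V(y_\varepsilon)-V(x_0)|+\varepsilon^\alpha)=o(\varepsilon^{N/2})$. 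Setting $\varphi_\varepsilon:=\varphi_{\varepsilon,y_\varepsilon}$ then gives a solution $u_\varepsilon=U((x-y_\varepsilon)/\varepsilon)+\varphi_\varepsilon$ with the stated properties.

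I expect the main obstacle to be the uniform invertibility of $\mathcal L_{\varepsilon,y}$ on $Z_{\varepsilon,y}^{\perp}$: one must convert the abstract nondegeneracy of Theorem \ref{Thm1.2} into an estimate stable both under the $\varepsilon$-rescaling and under replacing the constant $m=V(x_0)$ by the slowly varying $V(y+\varepsilon z)$, while carrying along the nonlocal ``rank-one type'' term $2b\big(\int_{\R^N}(-\Delta)^{s/2}U\,(-\Delta)^{s/2}\varphi\,dx\big)(-\Delta)^sU$ present in $L_+$, which is precisely what distinguishes this linearized operator from that of the fractional Schr\"{o}dinger equation. A secondary, bookkeeping-type difficulty is to propagate the correct powers of $\varepsilon$ — together with the factor $\varepsilon^{4s-N}$ multiplying the nonlocal term — through all the nonlinear and nonlocal estimates, so that residual, corrector and reduced energy all live at the scale $\varepsilon^N$ with the variation in $y$ genuinely dominating the error terms.
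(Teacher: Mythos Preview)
Your sketch is correct and follows essentially the same Lyapunov--Schmidt scheme as the paper: the paper's Lemmas 4.4, 4.5, 4.7, 5.1 and 5.2 are precisely your residual bound, remainder estimate, energy expansion, uniform invertibility of $\mathcal L_{\varepsilon,y}$ by a contradiction/compactness argument using Theorem \ref{Thm1.2}, and the contraction-mapping construction of $\varphi_{\varepsilon,y}$, after which both arguments minimise the reduced energy over a small ball about $x_0$ and compare with a point near $x_0$ to force the minimiser into the interior. One small correction: the orthogonality $\varphi_{\varepsilon,y}\in Z_{\varepsilon,y}^\perp$ by itself does \emph{not} yield $l_\varepsilon(\varphi_{\varepsilon,y})=O(\|\varphi_{\varepsilon,y}\|_\varepsilon^2)$, since $I_\varepsilon'(U_{\varepsilon,y})=(V-V(x_0))U_{\varepsilon,y}$ need not lie in $Z_{\varepsilon,y}$; the estimate is nevertheless true---and is exactly how the paper argues---simply because $\|l_\varepsilon\|$ and $\|\varphi_{\varepsilon,y}\|_\varepsilon$ share the same upper bound $C\varepsilon^{N/2}\big(|V(y)-V(x_0)|+\varepsilon^\alpha\big)$.
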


\par
This paper is organized as follows. We complete the proof of Theorem \ref{Thm1.1} in Section 2 and prove Theorem \ref{Thm1.2} in Section 3. In Section 4, we present some basic results which will be used later and explain the strategy of the proof of Theorem \ref{Thm1.3}. Finally, we finish the proof of Theorem \ref{Thm1.3} in Section 5.
\par
\vspace{3mm}
{\bf Notation.~}Throughout this paper, we make use of the following notations.
\begin{itemize}
\item[$\bullet$]  For any $R>0$ and for any $x\in \mathbb{R}^N$, $B_R(x)$ denotes the ball of radius $R$ centered at $x$;
\item[$\bullet$]  $\|\cdot\|_q$ denotes the usual norm of the space $L^q(\mathbb{R}^N),1\leq q\leq\infty$;
\item[$\bullet$]  $o_n(1)$ denotes $o_n(1)\rightarrow 0$ as $n\rightarrow\infty$;
\item[$\bullet$]  $C$ or $C_i(i=1,2,\cdots)$ are some positive constants may change from line to line.
\end{itemize}

\section{Proof of Theorem \ref{Thm1.1}}
In this section we prove Theorem \ref{Thm1.1}. Our methods depend on the following result due to Frank, Lenzmann and Silvestre \cite{MR3530361}.

\begin{Pro}\label{Pro2.1} Let $N\geq 1$, $0<s<1$ and $1<p<2_s^*-1$. Then
the following holds.
\begin{itemize}
  \item[$(i)$] there exists a minimizer $Q\in H^{s}(\mathbb{R}^N)$ for $J(u)$,  which can be chose a nonnegative function that solves equation  \eqref{eq1.7};
  \item[$(ii)$] there exist some $x_0\in \mathbb{R}^N$ such that   $Q(\cdot-x_0)$ is radial, positive and strictly decreasing in $r=|x-x_0|$. Moreover, the function $Q$ belongs to $C^\infty(\mathbb{R}^N)\cap H^{2s+1}(\mathbb{R}^N)$ and it satisfies
\begin{equation*}
\frac{C_1}{1+|x|^{N+2s}}\leq Q(x)\leq \frac{C_2}{1+|x|^{N+2s}},\quad\forall \ x\in \mathbb{R}^N,
\end{equation*}
with some constants $C_2\geq C_1>0$;
  \item[$(iii)$] $Q$ is a unique solution of \eqref{eq1.7} up to translation.
  \end{itemize}
\end{Pro}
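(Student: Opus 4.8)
The plan is to follow the foundational work of Frank, Lenzmann and Silvestre \cite{MR3530361}, supplemented by the symmetry and regularity theory for the fractional Laplacian, treating the three items in turn.

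For item $(i)$, I would first show that $\mathcal{S}=\inf_{u\in H^s(\mathbb{R}^N)\setminus\{0\}}J(u)$ in \eqref{eq1.9} is attained. Given a minimizing sequence $(u_n)$, the scaling and translation invariances of the quotient $J$ let me normalize $\|u_n\|_2=\|(-\Delta)^{s/2}u_n\|_2=1$; replacing each $u_n$ by its symmetric-decreasing rearrangement $u_n^{*}$ (which preserves the $L^2$ and $L^{p+1}$ norms and does not increase $\|(-\Delta)^{s/2}u_n\|_2$) one may assume $u_n$ radial, nonnegative and nonincreasing. The compact embedding $H^s_{\mathrm{rad}}(\mathbb{R}^N)\hookrightarrow L^{p+1}(\mathbb{R}^N)$, valid since $2<p+1<2_s^{*}$, then produces a nonzero weak limit $Q$, and weak lower semicontinuity of the seminorms shows $J(Q)=\mathcal{S}$. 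The Euler--Lagrange equation for $J$, after a suitable dilation, is precisely \eqref{eq1.7}, and nonnegativity of the minimizer is retained because $\|(-\Delta)^{s/2}|u|\|_2\le\|(-\Delta)^{s/2}u\|_2$.

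For item $(ii)$, positivity of $Q$ follows from the strong maximum principle for $(-\Delta)^s+1$ applied to $(-\Delta)^sQ+Q=Q^p\ge0$, and radial strict monotonicity is inherited from the rearranged minimizing sequence (the strict form of the rearrangement inequality forcing any minimizer to be radial). Regularity comes from the convolution representation $Q=\mathcal{K}\ast Q^p$, where $\mathcal{K}$ is the kernel of $((-\Delta)^s+1)^{-1}$: a bootstrap first gives $Q\in L^{\infty}(\mathbb{R}^N)$ and then, by the Schauder-type estimates for the fractional Laplacian, $Q\in C^{\infty}(\mathbb{R}^N)\cap H^{2s+1}(\mathbb{R}^N)$. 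Finally, the decay $\mathcal{K}(x)\sim c_{N,s}|x|^{-(N+2s)}$ as $|x|\to\infty$, together with $\mathcal{K}\in L^1$ and the two-sided control on $Q^p$, yields $C_1(1+|x|)^{-(N+2s)}\le Q(x)\le C_2(1+|x|)^{-(N+2s)}$.

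For item $(iii)$, which is the deep part of \cite{MR3530361} and the step I expect to be the main obstacle, the argument rests on two pillars. First, the nondegeneracy of the linearized operator $L_{+}=(-\Delta)^s+1-pQ^{p-1}$ on $L^2(\mathbb{R}^N)$: decomposing $L_{+}$ along spherical harmonics reduces matters to a family of one-dimensional fractional Schr\"{o}dinger operators, for which one establishes a Perron--Frobenius property, the location of the bottom of the spectrum, and a spectral gap, concluding that $\ker L_{+}=\mathrm{span}\{\partial_{x_1}Q,\dots,\partial_{x_N}Q\}$. Second, a continuation scheme in the parameter $s$: uniqueness is classical at $s=1$ by ODE shooting, the nondegeneracy just obtained lets the implicit function theorem propagate a locally unique branch of ground states, and real-analytic dependence on $s$ together with uniform a priori bounds upgrades this to global uniqueness on $(0,1)$ for every admissible subcritical $p$. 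Since Proposition \ref{Pro2.1} is exactly the main result of \cite{MR3530361} specialized to the normalization $a=m=1$, I would close by checking that our hypotheses $N\ge1$, $0<s<1$, $1<p<2_s^{*}-1$ match theirs and invoking that theorem.
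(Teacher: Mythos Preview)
The paper does not prove Proposition~\ref{Pro2.1} at all: it is stated as a known result ``due to Frank, Lenzmann and Silvestre \cite{MR3530361}'' and then used as a black box in the proof of Theorem~\ref{Thm1.1}. Your proposal therefore does strictly more than the paper does---you sketch the actual argument of \cite{MR3530361} rather than merely citing it---and you yourself recognize this in your final sentence, where you propose to close by invoking that theorem. As an outline of the Frank--Lenzmann--Silvestre proof your sketch is accurate (rearrangement and radial compactness for existence; maximum principle, bootstrap and kernel asymptotics for positivity, regularity and decay; nondegeneracy plus continuation in $s$ from the local case $s=1$ for uniqueness), so there is no mathematical gap. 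The only remark is that, in the context of the present paper, the intended ``proof'' is simply the citation, and reproducing the full argument of \cite{MR3530361} is neither expected nor necessary.
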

{\bf Proof of Theorem \ref{Thm1.1}} Let $Q$ be the uniquely positive solution of \eqref{eq1.7} and also a minimizer of $J(u)$. Consider the
equation
\begin{equation}\label{eq2.1}
f(\mathcal{E})=\mathcal{E}-a-bm^{\frac{2}{p-1}+\frac{2s-N}{2s}}\|(-\Delta)^{\frac{s}{2}}Q\|^2_2\mathcal{E}^{\frac{N-2s}{2s}}=0,
\quad \mathcal{E}\in (a,+\infty).
\end{equation}
Recall that $s>\frac{N}{4}$. we have $\frac{N-2s}{2s}<1$, which implies that $\lim\limits_{\mathcal{E}\to +\infty}f(\mathcal{E})=+\infty$.
Moreover, one has $f(a)<0$. Consequently, there exists $\mathcal{E}_0>a$ such that $f(\mathcal{E}_0)=0$. Let
\begin{equation}\label{eq2.2}
U(x)=m^{\frac{1}{p-1}}Q(m^{\frac{1}{2s}}\mathcal{E}_0^{-\frac{1}{2s}}x)=m^{\frac{1}{p-1}}Q(\tilde{x}).
\end{equation}
It follows from the definition of $(-\Delta)^s$ that
\begin{equation*}
\Big((-\Delta)^sU\Big)(x)=m^{\frac{p}{p-1}}\mathcal{E}_0^{-1}\Big((-\Delta)^sQ\Big)(\tilde{x}),
\end{equation*}
which implies that $U$ is a positive solution of the following equation
\begin{equation}\label{eq2.3}
\mathcal{E}_0(-\Delta )^sU+mU=U^p \quad \text{in}\quad \mathbb{R}^{N}.
\end{equation}
Note that
\begin{equation*}
\|(-\Delta)^{\frac{s}{2}}U\|^2_2=m^{\frac{2}{p-1}+\frac{2s-N}{2s}}\|(-\Delta)^{\frac{s}{2}}Q\|^2_2\mathcal{E}_0^{\frac{N-2s}{2s}}.
\end{equation*}
We have $\mathcal{E}_0=a+b\|(-\Delta)^{\frac{s}{2}}U\|^2_2$. From \eqref{eq2.3}, we conclude that $U$ is a positive solution of
\eqref{eq1.2}. Furthermore, $U$ is a minimizer for $J(u)$ since $J(U)=J(Q)$. Then the conclusions of Theorem \ref{Thm1.1} follows
from \eqref{eq2.2} and Proposition \ref{Pro2.1}.

\begin{Rem}\label{R2.1}
Our method presented in the proof of Theorem \ref{Thm1.1} is not variational which is simpler and more useful. It's clear that the functional corresponding to equation \eqref{eq1.2} includes the 4-order term $\frac{b}{4}\|(-\Delta)^{\frac{s}{2}}u\|^4_2$. Thus it is not easy to show that the boundedness of a $(PS)$ sequence if $1<p\leq 3$.
However, our Theorem \ref{Thm1.1} implies that the 4-superlinear condition ($p>3$), which is always used in literature, seems not
essential for the existence of solutions for Kirchhoff type problems \eqref{eq1.1} and \eqref{eq1.2}.
\end{Rem}

\section{Proof or Theorem \ref{Thm1.2}}
\subsection{Uniqueness}
In this subsection we prove the uniqueness part of Theorem \ref{Thm1.2}.\\
{\bf Proof of Uniqueness:} Let $U$ be a ground state positive solution of \eqref{eq1.2} and set
\begin{equation*}
\mathcal{E}_0=a+b\|(-\Delta)^{\frac{s}{2}}U\|^2_2\quad \text{and}\quad \tilde{U}(x)=m^{-\frac{1}{p-1}}U(m^{-\frac{1}{2s}}\mathcal{E}_0^{\frac{1}{2s}}x).
\end{equation*}
As in the proof of Theorem \ref{Thm1.1}, it is easy to check that $\tilde{U}$ is a positive solution of \eqref{eq1.7} and a minimizer of
$J(u)$. Hence, $\tilde{U}$ is a ground state solution. It follows from Proposition \ref{Pro2.1} that there exists some $x_0\in \mathbb{R}^N$ such that $\tilde{U}(x)=Q(x-x_0)$, where $Q$ is the unique ground state of \eqref{eq1.7}. Consequently, we have
\begin{equation*}
{U}(x)=m^{\frac{1}{p-1}}Q\Big(m^{\frac{1}{2s}}\mathcal{E}_0^{-\frac{1}{2s}}x-x_0\Big).
\end{equation*}
and
\begin{equation}\label{eq3.1}
\mathcal{E}_0=a+bm^{\frac{2}{p-1}+\frac{2s-N}{2s}}\|(-\Delta)^{\frac{s}{2}}Q\|^2_2\mathcal{E}_0^{\frac{N-2s}{2s}}.
\end{equation}
\par
 In order to arrive at the desired conclusion, we have to prove  $\mathcal{E}_0$ is uniquely defined. For this, consider the real function $f$
 defined by \eqref{eq2.1}. Recall that $\frac{N-2s}{2s}<1$. It is evident that $f$ is strictly  increasing on $(a,+\infty)$ if $\frac{N-2s}{2s}\leq 0$. Moreover,  for $0<\frac{N-2s}{2s}<1$, one can verify that the solution  $\mathcal{E}_0$ of \eqref{eq2.1} must belong to the increasing
 interval  of $f$ since a solution $\mathcal{E}$ satisfies
\begin{equation*}
\mathcal{E}-bm^{\frac{2}{p-1}+\frac{2s-N}{2s}}\|(-\Delta)^{\frac{s}{2}}Q\|^2_2\mathcal{E}^{\frac{N-2s}{2s}}>0.
\end{equation*}
Hence, $\mathcal{E}_0$ is uniquely defined by \eqref{eq3.1}. The proof is completed.

\subsection{Nondegeneracy}
In this subsection we prove the nondegeneracy part of Theorem  \ref{Thm1.2}.  We start with introducing a nondegeneracy result due to Frank, Lenzmann and Silvestre \cite{MR3530361}.

\begin{Pro}\label{Pro3.1} Let $N\geq 1$, $0<s<1$, $1<p<2_s^*-1$ and $c$ be a positive constant.
Suppose that $Q\in H^s(\mathbb{R}^N)$ is a ground state solution of
\begin{equation}\label{eq3.2}
c(-\Delta)^sQ+Q=|Q|^p\quad \text{in}\quad \mathbb{R}^{N}
\end{equation}
and $T_+$ denotes the corresponding linearized operator given by
\begin{equation*}
T_+=c(-\Delta)^s+1-p|Q|^{p-1}.
\end{equation*}
Then the following holds.
\begin{itemize}
  \item[$(i)$] $Q$ is nondegenerate, i.e., $\ker T_+=span\{\partial_{x_1}Q, \partial_{x_2}Q,\cdots,
\partial_{x_N}Q\}$;
  \item[$(ii)$] the restriction of $T_+$ on $L^2_{rad}(\mathbb{R}^N)$ is one-to-one and thus it has an inverse $T_+^{-1}$ acting on
  $L^2_{rad}(\mathbb{R}^N)$;
  \item[$(iii)$] $T_+Q=-(p-1)Q^{p}$ and $T_+R=-2sQ$, where $R=\frac{2s}{p-1}Q+x\cdot \nabla Q$.
  \end{itemize}
\end{Pro}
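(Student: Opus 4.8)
The plan is to reduce parts $(i)$ and $(ii)$ to the normalized equation \eqref{eq1.7} treated by Frank, Lenzmann and Silvestre \cite{MR3530361}, and to obtain part $(iii)$ by a direct one-parameter scaling computation. For $(i)$, set $\mu=c^{1/(2s)}$ and $\widetilde Q(x)=Q(\mu x)$. Using $(-\Delta)^s[Q(\mu\,\cdot\,)](x)=\mu^{2s}((-\Delta)^sQ)(\mu x)=c\,((-\Delta)^sQ)(\mu x)$ together with \eqref{eq3.2}, one checks at once that $\widetilde Q$ solves $(-\Delta)^s\widetilde Q+\widetilde Q=\widetilde Q^{\,p}$, and $\widetilde Q$ is again a minimizer of $J$ since the quotient in \eqref{eq1.9} is invariant under the dilation $x\mapsto\mu x$. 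Hence $\widetilde Q$ is a ground state of \eqref{eq1.7}, so by \cite{MR3530361} its linearization $\widetilde T_+=(-\Delta)^s+1-p\widetilde Q^{\,p-1}$ has kernel $\operatorname{span}\{\partial_{x_1}\widetilde Q,\dots,\partial_{x_N}\widetilde Q\}$. The dilation $(\mathcal D\varphi)(x)=\varphi(\mu x)$ is an isomorphism of $H^s(\mathbb R^N)$ satisfying $\mathcal D\,T_+=\widetilde T_+\,\mathcal D$ (because $c\mu^{-2s}=1$ and $Q(\mu\,\cdot\,)^{p-1}=\widetilde Q^{\,p-1}$), and $\mathcal D(\partial_{x_j}Q)=\mu^{-1}\partial_{x_j}\widetilde Q$; therefore $\ker T_+=\mathcal D^{-1}(\ker\widetilde T_+)=\operatorname{span}\{\partial_{x_1}Q,\dots,\partial_{x_N}Q\}$, which is $(i)$.

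For $(ii)$, note that since $Q$ is radial (Proposition \ref{Pro2.1}, transported through $\mathcal D$), $T_+$ maps $L^2_{rad}(\mathbb R^N)$ into itself and is self-adjoint there with form domain $H^s_{rad}(\mathbb R^N)$; moreover $Q^{p-1}(x)\to0$ as $|x|\to\infty$ by the decay estimate of Proposition \ref{Pro2.1}, so $p|Q|^{p-1}$ is a relatively compact perturbation of $c(-\Delta)^s+1$ and $\sigma_{\mathrm{ess}}(T_+)=[1,+\infty)$, in particular $0\notin\sigma_{\mathrm{ess}}(T_+)$. If $\varphi\in L^2_{rad}$ with $T_+\varphi=0$, then $\varphi\in\ker T_+$, so $\varphi=\sum_{j=1}^N c_j\,\partial_{x_j}Q$ by $(i)$; but each $\partial_{x_j}Q$ is odd in the variable $x_j$ while $\varphi$ is radial, which forces $c_1=\dots=c_N=0$ and hence $\varphi=0$. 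Thus $0$ is neither an eigenvalue nor in the essential spectrum of $T_+\big|_{L^2_{rad}}$, so it lies in the resolvent set and $T_+^{-1}$ is a bounded operator on $L^2_{rad}(\mathbb R^N)$.

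For $(iii)$, since $Q>0$ we may drop the absolute values. The first identity is immediate from \eqref{eq3.2}: $T_+Q=c(-\Delta)^sQ+Q-pQ^{p-1}Q=Q^p-pQ^p=-(p-1)Q^p$. For the second, introduce the family $w_\lambda(x)=\lambda^{\frac{2s}{p-1}}Q(\lambda x)$, so that $w_1=Q$ and $\frac{d}{d\lambda}\big|_{\lambda=1}w_\lambda=\frac{2s}{p-1}Q+x\cdot\nabla Q=R$. Using $(-\Delta)^sw_\lambda(x)=\lambda^{\frac{2s}{p-1}+2s}((-\Delta)^sQ)(\lambda x)$ together with \eqref{eq3.2} and the identity $\frac{2s}{p-1}+2s=\frac{2sp}{p-1}$, a short computation shows that $w_\lambda$ solves $c(-\Delta)^sw_\lambda+\lambda^{2s}w_\lambda=w_\lambda^{\,p}$. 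Differentiating this in $\lambda$ at $\lambda=1$ gives $c(-\Delta)^sR+2sQ+R=pQ^{p-1}R$, that is $T_+R=-2sQ$. The decay and regularity of $Q$ from Proposition \ref{Pro2.1} ensure $R\in H^s(\mathbb R^N)$ and justify differentiating under $(-\Delta)^s$.

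The only genuine obstacle is $(i)$: the nondegeneracy of the ground state of \eqref{eq1.7} is the deep theorem of \cite{MR3530361}, which is used here as a black box; what is new is merely the bookkeeping of the dilation $\mu=c^{1/(2s)}$ and the check that it intertwines the two linearized operators. Given $(i)$ and the qualitative properties of $Q$ in Proposition \ref{Pro2.1}, part $(ii)$ is a soft spectral argument and part $(iii)$ a routine scaling identity, the only mild care being to verify $R\in H^s(\mathbb R^N)$ so that $R$ lies in the domain of $T_+$.
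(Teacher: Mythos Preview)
Your argument is correct in all three parts. However, you should be aware that the paper does not actually prove Proposition~\ref{Pro3.1}: it is stated as a known result ``due to Frank, Lenzmann and Silvestre \cite{MR3530361}'' and used as a black box in the subsequent proof of Theorem~\ref{Thm1.2}. So there is no ``paper's own proof'' to compare with; you have supplied a proof where the paper gives only a citation.

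That said, what you have written is exactly the right way to justify the proposition from the literature. The FLS theorem is stated for the normalized equation \eqref{eq1.7} (i.e.\ $c=1$), and the proposition here allows a general coefficient $c>0$; your dilation $\mu=c^{1/(2s)}$ and the intertwining relation $\mathcal D\,T_+=\widetilde T_+\,\mathcal D$ correctly transport the kernel from the normalized to the general case. Part $(ii)$ is then a routine consequence of $(i)$ together with the fact that $p|Q|^{p-1}$ is a relatively compact perturbation of $c(-\Delta)^s+1$ (so $0\notin\sigma_{\mathrm{ess}}$) and the parity argument ruling out radial elements in $\operatorname{span}\{\partial_{x_j}Q\}$. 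Part $(iii)$ is a clean scaling computation, and your check that $2s/(p-1)+2s=2sp/(p-1)$ is the key identity making $w_\lambda$ solve $c(-\Delta)^sw_\lambda+\lambda^{2s}w_\lambda=w_\lambda^{\,p}$; the paper later quotes the same identity \eqref{eq3.9} from \cite{Ros2014The} when using $R=x\cdot\nabla U$. One small remark: in $(ii)$ you appeal to self-adjointness of $T_+$ on $L^2_{rad}$; strictly speaking the natural domain is $H^{2s}_{rad}$, not $H^s_{rad}$ (which is the form domain), but this does not affect the conclusion.
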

\par
Let $U$ be a ground state solution of \eqref{eq1.2}. From Theorems \ref{Thm1.1} and the first part of Theorem \ref{Thm1.2}, $U\in H^{2s+1}(\mathbb{R}^N)$ and $U>0$ is radial. By rescaling $U(x)\mapsto \mu Q(\lambda x)$ with suitable
$\mu$ and $\lambda$, we can assume without loss of generality that $U$ satisfies the normalized equation
\begin{equation}\label{eq3.3}
\Big(a+b{\int_{\mathbb{R}^{N}}}|(-\Delta)^{\frac{s}{2}}U|^2dx\Big)(-\Delta)^sU+U=U^{p}\quad,\text{in}\quad\mathbb{R}^{N}.
\end{equation}
Then the linearized operator $L_+$ can be rewrote as
\begin{equation*}
L_+\varphi=\Big(a+b{\int_{\mathbb{R}^{N}}}|(-\Delta)^{\frac{s}{2}}U|^2dx\Big)(-\Delta)^s\varphi+\varphi-pU^{p-1}\varphi+2b\Big({\int_{\mathbb{R}^{N}}}(-\Delta
)^{\frac{s}{2}}U(-\Delta )^{\frac{s}{2}}\varphi dx\Big)(-\Delta )^sU
\end{equation*}
acting on $L^2(\mathbb{R}^N)$ with domain $H^s(\mathbb{R}^N)$.
\par
To complete the proof of Theorem \ref{Thm1.2}, we first show that the restriction of $L_+$ on $L^2_{rad}(\mathbb{R}^N)$ has trivial kernel.

\begin{Lem}\label{Lem3.1}
We have $(\ker L_+)\cap L^2_{rad}(\mathbb{R}^N)=\{0\}$.
\end{Lem}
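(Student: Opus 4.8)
The plan is to reduce the problem to the nondegeneracy of the scalar operator $T_+$ from Proposition \ref{Pro3.1} by carefully accounting for the extra nonlocal term $2b\big(\int_{\mathbb{R}^N}(-\Delta)^{\frac{s}{2}}U(-\Delta)^{\frac{s}{2}}\varphi\,dx\big)(-\Delta)^sU$ that appears in $L_+$. Write $A=a+b\int_{\mathbb{R}^N}|(-\Delta)^{\frac{s}{2}}U|^2dx$, so that \eqref{eq3.3} reads $A(-\Delta)^sU+U=U^p$, and set $T_+\varphi=A(-\Delta)^s\varphi+\varphi-pU^{p-1}\varphi$, which is exactly the operator of Proposition \ref{Pro3.1} with $c=A$ (after the normalization). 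Then for $\varphi\in(\ker L_+)\cap L^2_{rad}(\mathbb{R}^N)$ we have
\begin{equation*}
T_+\varphi=-2b\Big(\int_{\mathbb{R}^N}(-\Delta)^{\frac{s}{2}}U(-\Delta)^{\frac{s}{2}}\varphi\,dx\Big)(-\Delta)^sU=:-\lambda\,(-\Delta)^sU,
\end{equation*}
where $\lambda=2b\int_{\mathbb{R}^N}(-\Delta)^{\frac{s}{2}}U(-\Delta)^{\frac{s}{2}}\varphi\,dx$ is a real scalar. Since $U$ is radial, $(-\Delta)^sU\in L^2_{rad}(\mathbb{R}^N)$, so by part $(ii)$ of Proposition \ref{Pro3.1} the operator $T_+$ is invertible on $L^2_{rad}(\mathbb{R}^N)$ and hence $\varphi=-\lambda\,T_+^{-1}\big((-\Delta)^sU\big)$. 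The whole proof therefore comes down to two things: (a) expressing $T_+^{-1}((-\Delta)^sU)$ explicitly using the algebraic identities in part $(iii)$, and (b) showing the resulting self-consistency equation forces $\lambda=0$.

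For step (a), I would use the two identities $T_+U=-(p-1)U^{p}$ and $T_+R=-2sU$ with $R=\frac{2s}{p-1}U+x\cdot\nabla U$. From $A(-\Delta)^sU=U^p-U$ one gets $(-\Delta)^sU=\frac{1}{A}(U^p-U)$, so
\begin{equation*}
T_+^{-1}\big((-\Delta)^sU\big)=\frac{1}{A}\Big(T_+^{-1}(U^p)-T_+^{-1}(U)\Big)=\frac{1}{A}\Big(-\frac{1}{p-1}U+\frac{1}{2s}R\Big),
\end{equation*}
using $T_+^{-1}(U^p)=-\frac{1}{p-1}U$ and $T_+^{-1}(U)=-\frac{1}{2s}R$ from part $(iii)$. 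Thus $\varphi=-\frac{\lambda}{A}\big(-\frac{1}{p-1}U+\frac{1}{2s}R\big)=\frac{\lambda}{A}\big(\frac{1}{p-1}U-\frac{1}{2s}(\frac{2s}{p-1}U+x\cdot\nabla U)\big)=-\frac{\lambda}{2sA}\,x\cdot\nabla U$. So either $\lambda=0$ (and then $\varphi=0$, done), or $\varphi$ is a nonzero multiple of $x\cdot\nabla U$.

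For step (b), I must rule out $\varphi=c\,x\cdot\nabla U$ with $c\neq 0$ by plugging it back into the definition of $\lambda$ and deriving a contradiction. Compute $\lambda=2bc\int_{\mathbb{R}^N}(-\Delta)^{\frac{s}{2}}U\,(-\Delta)^{\frac{s}{2}}(x\cdot\nabla U)\,dx$; on the other hand $\lambda=-2sAc$ from the relation $\varphi=-\frac{\lambda}{2sA}x\cdot\nabla U$. The integral $\int_{\mathbb{R}^N}(-\Delta)^{\frac{s}{2}}U\,(-\Delta)^{\frac{s}{2}}(x\cdot\nabla U)\,dx$ can be evaluated by the scaling/Pohozaev identity: testing $x\cdot\nabla U$ against $(-\Delta)^sU$ in the quadratic form corresponds to differentiating $\int|(-\Delta)^{\frac{s}{2}}U_\mu|^2dx$ at $\mu=1$ where $U_\mu(x)=U(\mu x)$, which gives $\int(-\Delta)^{\frac{s}{2}}U(-\Delta)^{\frac{s}{2}}(x\cdot\nabla U)\,dx=-\frac{N-2s}{2}\|(-\Delta)^{\frac{s}{2}}U\|_2^2$. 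Substituting, $\lambda=-bc(N-2s)\|(-\Delta)^{\frac{s}{2}}U\|_2^2$, and equating with $-2sAc=-2sc\big(a+b\|(-\Delta)^{\frac{s}{2}}U\|_2^2\big)$ yields, after cancelling $c\neq0$, the identity $b(N-2s)\|(-\Delta)^{\frac{s}{2}}U\|_2^2=2sa+2sb\|(-\Delta)^{\frac{s}{2}}U\|_2^2$, i.e. $b(N-4s)\|(-\Delta)^{\frac{s}{2}}U\|_2^2=2sa>0$. Since $N/4<s<1$ gives $N-4s<0$ while the left side would then be negative, this is a contradiction. Hence $c=0$, so $\varphi=0$, completing the proof.

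I expect the main obstacle to be step (b): getting the Pohozaev-type evaluation of $\int(-\Delta)^{\frac{s}{2}}U(-\Delta)^{\frac{s}{2}}(x\cdot\nabla U)\,dx$ with the correct constant and justifying the integration by parts for the fractional operator (the regularity and decay from Theorem \ref{Thm1.1}(iii), namely $U\in H^{2s+1}(\mathbb{R}^N)$ with polynomial decay, are exactly what make this rigorous). The sign condition $N<4s$ is precisely what closes the argument, which explains the hypothesis $s>N/4$ in the theorem.
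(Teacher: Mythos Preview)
Your proof is correct and follows essentially the same route as the paper's: both reduce $L_+\varphi=0$ to $T_+\varphi=-\lambda(-\Delta)^sU$, invert $T_+$ on radial functions using the identities $T_+U=-(p-1)U^p$ and $T_+R=-2sU$ to obtain $\varphi=-\frac{\lambda}{2sA}\,x\cdot\nabla U$, and then close with the Pohozaev-type identity $\int_{\mathbb{R}^N}(x\cdot\nabla U)(-\Delta)^sU\,dx=\frac{2s-N}{2}\|(-\Delta)^{s/2}U\|_2^2$ to force $\lambda=0$. Your final step is in fact more explicit than the paper's, which simply asserts $-\frac{(c-a)(2s-N)}{2sc}\neq 1$; your derivation of $b(N-4s)\|(-\Delta)^{s/2}U\|_2^2=2sa$ makes transparent why the hypothesis $s>N/4$ is exactly what is needed.
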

\begin{proof}Assume that $v\in H^s(\mathbb{R}^N)\cap L^2_{rad}(\mathbb{R}^N)$ belongs to $\ker L_+$. Then we have
\begin{equation}\label{eq3.4}
\Big(a+b{\int_{\mathbb{R}^{N}}}|(-\Delta)^{\frac{s}{2}}U|^2dx\Big)(-\Delta)^sv+v-pU^{p-1}v=-2b\Big({\int_{\mathbb{R}^{N}}}(-\Delta
)^{\frac{s}{2}}U(-\Delta )^{\frac{s}{2}}v dx\Big)(-\Delta)^sU.
\end{equation}
Let $c=a+b\|(-\Delta)^{\frac{s}{2}}U\|^2_2$. Recall that $U$ is a ground state solution of \eqref{eq3.3}. It follows from Theorem
\ref{Thm1.2} that $c$ is a constant independent of $U$ under the assumptions of Theorem \ref{Thm1.1}. Hence, $U$ solves \eqref{eq3.2}
with $c=a+b\|(-\Delta)^{\frac{s}{2}}U\|^2_2$. We then can rewrite \eqref{eq3.4} as
\begin{equation}\label{eq3.5}
T_+v=-2b\Big({\int_{\mathbb{R}^{N}}}(-\Delta)^{\frac{s}{2}}U(-\Delta )^{\frac{s}{2}}v dx\Big)(-\Delta)^sU=-\frac{2b\sigma_v}{c}(-U+U^{p}),
\end{equation}
where
\begin{equation*}
\sigma_v={\int_{\mathbb{R}^{N}}}(-\Delta )^{\frac{s}{2}}U(-\Delta
)^{\frac{s}{2}}v dx.
\end{equation*}
By applying Proposition \ref{Pro3.1}, we conclude that
\begin{equation}\label{eq3.6}
v=-\frac{2b\sigma_v}{c}T_+^{-1}(-U+U^{p})=-\frac{b\sigma_v}{sc}\psi,
\end{equation}
where $\psi=x\cdot \nabla U$. Multiplying \eqref{eq3.6} by $(-\Delta)^sU$ and integrating over $\mathbb{R}^N$, we see that
\begin{equation}\label{eq3.7}
\int_{\mathbb{R}^{N}}v(-\Delta)^{s}Udx=-\frac{b\sigma_v}{sc}\int_{\mathbb{R}^{N}}\psi(-\Delta)^{s}Udx.
\end{equation}
Note that
\begin{equation}\label{eq3.8}
\int_{\mathbb{R}^{N}}v(-\Delta)^{s}Udx=\int_{\mathbb{R}^{N}}(-\Delta )^{\frac{s}{2}}U(-\Delta)^{\frac{s}{2}}v dx
\end{equation}
and
\begin{equation}\label{eq3.9}
\int_{\mathbb{R}^{N}}\psi(-\Delta)^{s}Udx=\frac{2s-N}{2}\int_{\mathbb{R}^{N}}|(-\Delta)^{\frac{s}{2}}U|^2 dx
\end{equation}
(see e.g. \cite{Ros2014The}). We then conclude from \eqref{eq3.7}-\eqref{eq3.9} that
\begin{equation*}
\sigma_v=-\frac{b(2s-N)\sigma_v}{2sc}\int_{\mathbb{R}^{N}}|(-\Delta
)^{\frac{s}{2}}U|^2 dx=-\frac{(c-a)(2s-N)}{2sc}\sigma_v,
\end{equation*}
which implies that $\sigma_v=0$ due to $-\frac{(c-a)(2s-N)}{2sc}\neq1$. From \eqref{eq3.5}, we have $v=0$. The proof is completed.
\end{proof}
\par
We next will adopt lines presented in \cite{MR3530361} and \cite{MR2561169} to obtain the nondegeneracy of $U$. In what follows, we assume without loss of generality that $N\geq 2$ holds.
\par
Since $(-\Delta )^sU=c^{-1}(-U+U^{p})$ and $U(x)=U(|x|)$ is radial function, the operator $L_+$ commutes with rotations in $\mathbb{R}^N$ (see e.g. \cite{MR2561169}). Therefore, we can decompose $L^2(\mathbb{R}^N)$ using spherical harmonics
\begin{equation*}
L^2(\mathbb{R}^N)=\oplus_{l\geq 0}\mathcal{H}_l
\end{equation*}
so that $L_+$ acts invariantly on each subspace
\begin{equation*}
\mathcal{H}_l=L^2(\mathbb{R}_+, r^{N-1}dr)\otimes\mathcal{Y}_l.
\end{equation*}
Here $\mathcal{Y}_l=span\{Y_{l,m}\}_{m\in M_l}$ denotes space of the spherical harmonics of degree $l$ in space dimension $N$ and $M_l$ is an index set depending on $l$ and $N$. Precisely, $M_l=\frac{(l+N-1)!}{(N-1)!l!}$ for $l\geq 0$ and $M_l=0$ for $l<0$.
Moreover, denote by $\Delta_{\mathbb{S}^{N-1}}$ the Laplacian-Beltrami operator on the unit $N-1$ dimensional sphere $\mathbb{S}^{N-1}$ in $\mathbb{R}^{N}$ and by $Y_{l,m},l=0,1,\cdots$ the spherical harmonics satisfy
\begin{equation*}
-\Delta_{\mathbb{S}^{N-1}}Y_{l,m}=\lambda_lY_{l,m}
\end{equation*}
for all $l=0,1,\cdots$ and $1\leq m\leq M_l-M_{l-2}$, where
\begin{equation*}
\lambda_l=l(N+l-2)\ \ \forall\ l\geq0
\end{equation*}
is an eigenvalue of $-\Delta_{\mathbb{S}^{N-1}}$ with multiplicity $\leq M_l-M_{l-2}$ for all $k\in\mathbb N$. In particular, $\lambda_0=0$ is of multiplicity $1$ with $Y_{0,1}=1$, and $\lambda_1=N-1$ is of multiplicity $N$ with $Y_{1,m}=x_m/|x|$ for $1\leq m\leq N$. (see e.g. Ambrosetti and Malchiodi \cite[Chapter 2 and Chapter 4]{Ambrosetti-Malchiodibook})
\par
We can describe the action of $L_+$ more precisely. For each $l$, the action of $L_+$ on the radial factor in $\mathcal{H}_l$ is given
by
\begin{equation*}
(L_{+,l}f)(r)=c\Big((-\Delta_l)^sf\Big)(r)+f(r)-pU^{p-1}(r)f(r)+2b(W_lf)(r)
\end{equation*}
with the nonlocal linear operator
\begin{equation*}
(W_lf)(r)=\frac{2\pi^{\frac{N}{2}}}{\Gamma(\frac{N}{2})}\Big((-\Delta_l)^sU\Big)(r)\int_0^{+\infty}\Big((-\Delta_l)^sU\Big)(r)r^{N-1}f(r)dr
\end{equation*}
for $f\in C_0^{\infty}(\mathbb{R}^+)\subset L^2(\mathbb{R}_+, r^{N-1}dr)$. Here
$(-\Delta_l)^s$ is given by spectral calculus and the known formula
\begin{equation*}
-\Delta_l=-\partial_r^2-\frac{N-1}{r}\partial_r+\frac{l(l+N-2)}{r^2}.
\end{equation*}
\par
Applying  arguments similar to that used in \cite{MR3530361} and \cite{MR2561169}, one can verify that each $L_{+,l}$ enjoys a Perron-Frobenius property, that is, if $E=\inf \sigma(\mathcal{H}_l)$ ia an eigenvalue, then $E$ is simple and the corresponding eigenfunction can be chosen strictly positive. Moreover, we have $L_{+,l}>0$ for $l\geq 2$ in the sense of quadratic forms (see e.g. \cite{MR2561169}).
\par
We now return to the proof of Theorem \ref{Thm1.2}.
\par
{\bf Proof of nondegeneracy: }By differentiating equation \eqref{eq3.2}, we see that $L_{+}\partial _{x_i}U=0$ for $i=1,2,\cdots, N$. Since $\partial _{x_i}U(x)=U'(r)\frac{x_i}{r}\in \mathcal{H}_1$, this shows that $L_{+,1}U'=0$. Note that $U'(r)<0$ by Theorem \ref{Thm1.1}. It follows from the Perron-Frobenius property  that $0$ is the lowest eigenvalue  of $L_{+,1}$, with $-U'(r)$ being its an corresponding eigenfunction. Therefore, for any $v\in \mathcal{H}_1$ satisfying $L_{+,1}v=0$ must be some linear combination of $\{\partial _{x_i}U:i=1,2,\cdots, N\}$. Recall that $L_{+,l}>0$ for $l\geq 2$. Applying the Perron-Frobenius property again, $0$ cannot be an eigenvalue of $L_{+,l}>0$ for $l\geq 2$. Finally, Lemma \ref{Lem3.1} implies that $L_{+,0}=\{0\}$. Consequently, for any $v\in \ker L_+$, we conclude that $v\in \mathcal{H}_1$ and hence $\ker L_+=\ker L_{+,1}=span\{\partial_{x_1}U, \partial_{x_2}U,\cdots,
\partial_{x_N}U\}$. The proof is completed.

\section{Some preliminaries}
In this section, we introduce Lyapunov-Schmidt reduction method of the proof of Theorem \ref{Thm1.3} and present some elementary estimates for later use.
\par
It is known that every solution to Eq. \eqref{eq1.1} is a critical point of the energy functional $I_{\varepsilon}: H_{\varepsilon} \rightarrow \mathbb{R}$, given by
\begin{equation*}
I_{\varepsilon}(u)=\frac{1}{2}\|u\|_{\varepsilon}^{2}+\frac{b\varepsilon^{4s-N}}{4}\left(\int_{\mathbb{R}^{N}}|(-\Delta)^{\frac{s}{2}} u|^{2}dx\right)^{2}-\frac{1}{p+1} \int_{\mathbb{R}^{N}} u^{p+1}dx
\end{equation*}
for $u \in H_{\varepsilon}$. It is standard to verify that $I_{\varepsilon} \in C^{2}\left(H_{\varepsilon}\right) .$ So we are left to find a critical point of $I_{\varepsilon}$.  However, due to the presence of the double nonlocal terms $(-\Delta)^s$ and $\left(\int_{\mathbb{R}^{N}}|(-\Delta)^{\frac{s}{2}} u|^{2}\right)$, it requires more careful estimates on the orders of $\varepsilon$ in the procedure. In particular, the nonlocal terms brings new difficulties in the higher order remainder term, which is more complicated than the case of the fractional Schr\"{o}dinger equation \eqref{eq1.5} or usual Kirchhoff equation \eqref{eq1.4}.
\par
Denote $Q$ be the solution of the following equation:
\begin{equation*}
\begin{cases}
(-\Delta)^{s} u+u=u^{p}, & \\
u(0)=\max\limits_{x \in \mathbb{R}^{N}} u(x), & \\
u>0, x \in \mathbb{R}^{N}.
\end{cases}
\end{equation*}
Then, it is easy to see that the function
\begin{equation*}
W_{\lambda}(x):=\lambda^{\frac{1}{p}} Q\left(\lambda^{\frac{1}{2 s}} x\right)
\end{equation*}
satisfies the equation
\begin{equation*}
(-\Delta)^{s} u+\lambda u=u^{p}, \quad x \in \mathbb{R}^{N}.
\end{equation*}
Therefore, for any point $\xi \in \mathbb{R}^{N}$, taking $\lambda=V(\xi)$, it follows that $V(\xi)^{\frac{1}{p-1}} W\left(V(\xi)^{\frac{1}{2 s}} x\right)$ satisfies
\begin{equation*}
\varepsilon^{2 s}(-\Delta)^{s} u+V(\xi) u=u^{p}, x \in \mathbb{R}^{N}.
\end{equation*}
By the same idea and the proof of Theorem \ref{Thm1.1}, we have known  $U(x)=m^{\frac{1}{p-1}}Q(m^{\frac{1}{2s}}\mathcal{E}_0^{-\frac{1}{2s}}x)$ is a positive unique solution of \eqref{eq1.2}. Moreover,
we have the polynomial decay instead of the usual exponential decay of $U$ of and its derivatives. That is,
\begin{equation*}
U(x)+|\nabla U(x)| \leq \frac{C}{1+|x|^{N+2s}}, \quad x \in \mathbb{R}^{N}
\end{equation*}
for some $C>0$.
\par
Following the idea from Cao and Peng \cite{MR2581983} (see also \cite{MR4021897}) , we will use the unique ground state $U$ of \eqref{eq1.2} with $m=V(x_0)$ to build the solutions of Eq. \eqref{eq1.1}. Since the $\varepsilon$-scaling makes it concentrate around $\xi$, this function constitutes a good positive approximate solution to \eqref{eq1.1}.
\par
For $\delta,\eta>0$, fixing $y \in B_{\delta}(x_0)$, we define
\begin{equation*}
M_{\varepsilon,\eta}=\left\{(y, \varphi): y \in B_{\delta}(x_0), \varphi\in E_{\varepsilon,y}, \|\varphi\|^2\leq\eta\varepsilon^{N}\right\}
\end{equation*}
where we denote $E_{\varepsilon,y}$ by
\begin{equation*}
E_{\varepsilon, y}:=\left\{\varphi \in H_{\varepsilon}:\left\langle\frac{\partial U_{\varepsilon, y^{i}}}{\partial y^{i}}, \varphi\right\rangle_{\varepsilon}=0, i=1, \ldots, N\right\}.
\end{equation*}
We have the following estimates for $U_{\varepsilon,y^{i}}(r)$.
\begin{Lem}\cite[Lemma 2.2]{MR2352959}\label{Lem4.1}
For $i=1, \ldots, N$, in the limit $\varepsilon \rightarrow 0$ the quantities
\begin{equation*}
\left\|U_{ \varepsilon,y^{i}}(r)\right\|^{2}, \quad\left\|\frac{\partial U_{ \varepsilon,y^{i}}(r)}{\partial y^{i}}\right\|^{2}, \quad\left\|\frac{\partial^{2} U_{ \varepsilon,y^{i}}(r)}{\partial^{2} y^{i}}\right\|^{2}
\end{equation*}
have the same order as $\varepsilon^{N}$.
\end{Lem}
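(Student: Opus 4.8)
The plan is to prove Lemma~\ref{Lem4.1} by an explicit change of variables, exploiting the homogeneity of the fractional Laplacian together with the fact that the $\varepsilon$-dependent weight in $\langle\cdot,\cdot\rangle_{\varepsilon}$ is calibrated precisely so that the natural rescaling of $U_{\varepsilon,y}$ yields the clean power $\varepsilon^{N}$. The only ingredient needed is the scaling identity $(-\Delta)^{\frac{s}{2}}[v(\cdot/\varepsilon)](x)=\varepsilon^{-s}[(-\Delta)^{\frac{s}{2}}v](x/\varepsilon)$ for $v\in H^{s}(\mathbb{R}^{N})$, which is immediate from the Fourier characterization of $(-\Delta)^{\frac{s}{2}}$.

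First I would treat $\|U_{\varepsilon,y}\|_{\varepsilon}^{2}$. After the change of variable that unscales $U_{\varepsilon,y}$ back to $U$ (whose Jacobian contributes a factor $\varepsilon^{N}$), the term $\varepsilon^{2s}a\int_{\mathbb{R}^{N}}|(-\Delta)^{\frac{s}{2}}U_{\varepsilon,y}|^{2}dx$ becomes $\varepsilon^{2s}a\cdot\varepsilon^{-2s}\cdot\varepsilon^{N}\|(-\Delta)^{\frac{s}{2}}U\|_{2}^{2}=a\varepsilon^{N}\|(-\Delta)^{\frac{s}{2}}U\|_{2}^{2}$, since the weight $\varepsilon^{2s}$ cancels exactly the factor $\varepsilon^{-2s}$ produced by the scaling identity. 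The potential term $\int_{\mathbb{R}^{N}}V(x)U_{\varepsilon,y}^{2}dx$ equals $\varepsilon^{N}$ times the integral of $U^{2}$ against a bounded, uniformly positive rescaling of $V$ that converges pointwise to $V(x_{0})$ as $\varepsilon\to 0$; since $U(x)\le C(1+|x|)^{-(N+2s)}\in L^{2}(\mathbb{R}^{N})$, by $(V_{1})$ and dominated convergence this term is $(V(x_{0})+o(1))\|U\|_{2}^{2}\,\varepsilon^{N}$, and in any case lies between $(\inf V)\|U\|_{2}^{2}\,\varepsilon^{N}$ and $(\sup V)\|U\|_{2}^{2}\,\varepsilon^{N}$. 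Hence $\|U_{\varepsilon,y}\|_{\varepsilon}^{2}=(a\|(-\Delta)^{\frac{s}{2}}U\|_{2}^{2}+V(x_{0})\|U\|_{2}^{2}+o(1))\varepsilon^{N}$, which is of order $\varepsilon^{N}$ with a strictly positive lower bound.

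For the two derivative quantities the computation is identical, the only change being that differentiating $U_{\varepsilon,y}$ once, respectively twice, in the $i$-th component of the concentration parameter replaces $U$ by $\partial_{i}U$, respectively $\partial_{i}^{2}U$, under the same change of variables, without altering the powers of $\varepsilon$. Thus $\|\partial_{y^{i}}U_{\varepsilon,y}\|_{\varepsilon}^{2}=(a\|(-\Delta)^{\frac{s}{2}}\partial_{i}U\|_{2}^{2}+V(x_{0})\|\partial_{i}U\|_{2}^{2}+o(1))\varepsilon^{N}$, and likewise $\|\partial_{y^{i}}^{2}U_{\varepsilon,y}\|_{\varepsilon}^{2}$ with $\partial_{i}^{2}U$ in place of $\partial_{i}U$. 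It then remains to record that $\|\partial_{i}U\|_{2}$, $\|(-\Delta)^{\frac{s}{2}}\partial_{i}U\|_{2}$, $\|\partial_{i}^{2}U\|_{2}$ and $\|(-\Delta)^{\frac{s}{2}}\partial_{i}^{2}U\|_{2}$ are finite, and that the ones involving $\partial_{i}U$ are strictly positive — the latter because $U$ is radial and strictly decreasing, so $\partial_{i}U\not\equiv 0$. Finiteness follows from Theorem~\ref{Thm1.1}: $U\in C^{\infty}(\mathbb{R}^{N})\cap H^{2s+1}(\mathbb{R}^{N})$ with $U(x)+|\nabla U(x)|\le C(1+|x|)^{-(N+2s)}$, and bootstrapping the equation solved by $U$ — in which the right-hand side $U^{p}$ is as regular as $U$, because $H^{2s+1}(\mathbb{R}^{N})\hookrightarrow L^{\infty}(\mathbb{R}^{N})$ when $s>\frac{N}{4}$ — gives $U\in H^{k}(\mathbb{R}^{N})$ for every $k$, so that $\partial_{i}U,\partial_{i}^{2}U\in H^{s}(\mathbb{R}^{N})\cap L^{2}(\mathbb{R}^{N})$; together with the polynomial decay of $\nabla U$ and $D^{2}U$ (established by the same arguments as for the decay of $U$) this makes all four integrals converge.

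The main obstacle I anticipate is purely bookkeeping: one has to track the powers of $\varepsilon$ so that all three quantities come out of order $\varepsilon^{N}$ — in particular the concentration parameter of $U_{\varepsilon,y}$ must enter in rescaled form, so that $\partial_{y^{i}}$ does not generate a spurious factor $\varepsilon^{-1}$ — and one needs the second-order decay and $H^{s}$-regularity of $U$ at hand. Both are direct consequences of Theorem~\ref{Thm1.1} and the scaling structure, so the proof reduces to these two verifications, which is why the estimate can simply be quoted from \cite[Lemma 2.2]{MR2352959}.
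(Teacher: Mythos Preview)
The paper does not prove Lemma~\ref{Lem4.1} at all; it is quoted from \cite[Lemma~2.2]{MR2352959} and stated without proof. Your approach --- a direct change of variables using the scaling identity $(-\Delta)^{s/2}[v(\cdot/\varepsilon)](x)=\varepsilon^{-s}[(-\Delta)^{s/2}v](x/\varepsilon)$ together with the regularity and decay of $U$ from Theorem~\ref{Thm1.1} --- is the natural and standard way to establish such estimates, and is more than the paper itself provides.

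Your final paragraph correctly isolates the one genuine issue: with the paper's definition $U_{\varepsilon,y}(x)=U((x-y)/\varepsilon)$, each $\partial_{y^{i}}$ brings down a factor $\varepsilon^{-1}$, so the three quantities come out of orders $\varepsilon^{N}$, $\varepsilon^{N-2}$ and $\varepsilon^{N-4}$ in $\|\cdot\|_{\varepsilon}^{2}$, not all $\varepsilon^{N}$. Indeed, in the proof of Lemma~\ref{Lem4.2} the paper itself writes $(\partial_{y^{j}}U_{\varepsilon_{n},y_{n}^{j}},\partial_{y^{j}}U_{\varepsilon_{n},y_{n}^{j}})=C_{j}\varepsilon_{n}^{1-N}+O(\varepsilon_{n}^{2-N})$, which is inconsistent with the $\varepsilon^{N}$ claimed in Lemma~\ref{Lem4.1}. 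So the discrepancy you anticipate is an inconsistency in the paper's notation (the cited source presumably uses a rescaled concentration parameter), not a gap in your reasoning; your scaling computation is the correct one, and the stated orders only hold verbatim if $y$ is already the rescaled variable.
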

\par
We will restrict our argument to the existence of a critical point of $I_{\varepsilon}$ that concentrates, as $\varepsilon$ small enough, near the spheres with radii $\frac{x_{0}}{ \varepsilon}$. Thus we
are looking for a critical point of the form
\begin{equation*}
u_\varepsilon=U_{\varepsilon,y}+\varphi_{\varepsilon}
\end{equation*}
where $\varphi_{\varepsilon} \in E_{\varepsilon, y}$, and $\varepsilon y\rightarrow x_{0},\left\|\varphi_{\varepsilon}\right\|^{2}=o\left(\varepsilon^{N}\right)$ as $\varepsilon \rightarrow 0$. For this we introduce a new functional $J_{\varepsilon}:M_{\varepsilon,\eta} \rightarrow \mathbb{R}$ defined by
\begin{equation*}
J_{\varepsilon}(y, \varphi)=I_{\varepsilon}\left(U_{\varepsilon, y}+\varphi\right), \quad \varphi \in E_{\varepsilon, y}.
\end{equation*}
In fact, we divide the proof of Theorem  \ref{Thm1.3} into two steps:
\begin{itemize}
  \item [{\bf Step1:}] for each $\varepsilon, \delta$ sufficiently small and for each $y \in B_{\delta}(x_0)$, we will find a critical point $\varphi_{\varepsilon, y}$ for $J_{\varepsilon}(y, \cdot)$ (the function $y \mapsto \varphi_{\varepsilon, y}$ also belongs to the class $C^{1}\left(H_{\varepsilon}\right)$ );
  \item [{\bf Step2:}] for each $\varepsilon, \delta$ sufficiently small, we will find a critical point $y_{\varepsilon}$ for the function $j_{\varepsilon}$ : $B_{\delta}(x_0) \rightarrow \mathbb{R}$ induced by
\begin{equation}\label{eq4.1}
y \mapsto j_{\varepsilon}(y) \equiv J\left(y, \varphi_{\varepsilon, y}\right).
\end{equation}
That is, we will find a critical point $y_{\varepsilon}$ in the interior of $B_{\delta}(x_0)$.
\end{itemize}
\par
It is standard to verify that $\left(y_{\varepsilon}, \varphi_{\varepsilon, y_{\varepsilon}}\right)$ is a critical point of $J_{\varepsilon}$ for $\varepsilon$ sufficiently small by the chain rule. This gives a solution $u_{\varepsilon}= U_{\varepsilon, y_{\varepsilon}}+\varphi_{\varepsilon, y_{\varepsilon}}$ to Eq. \eqref{eq1.1} for $\varepsilon$ sufficiently small in virtue of the following lemma.
\begin{Lem}\label{Lem4.2}
There exist $\varepsilon_{0}, \eta_{0}>0$ such that for $\varepsilon \in\left(0, \varepsilon_{0}\right], \eta \in\left(0, \eta_{0}\right]$, and $(y, \varphi) \in M_{\varepsilon, y}$ the following are equivalent:
\begin{itemize}
  \item [$(i)$] $u_{\varepsilon}= U_{\varepsilon, y_{\varepsilon}}+\varphi_{\varepsilon, y_{\varepsilon}}$ is a critical point of $I_{\varepsilon}$ in $H_{\varepsilon}$.
  \item [$(i)$] $(y, \varphi)$ is a critical point of $J_{\varepsilon}$.
\end{itemize}
\end{Lem}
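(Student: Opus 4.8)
The plan is to exploit the fact that $J_\varepsilon$ is nothing but the restriction of $I_\varepsilon$ to the finite-codimension manifold
$$
\mathcal{N}_{\varepsilon,\eta}=\{U_{\varepsilon,y}+\varphi : y\in B_\delta(x_0),\ \varphi\in E_{\varepsilon,y},\ \|\varphi\|^2\le\eta\varepsilon^N\},
$$
and to show that, for $\varepsilon$ and $\eta$ small, criticality along $\mathcal{N}_{\varepsilon,\eta}$ already forces criticality in the whole space $H_\varepsilon$. The implication $(i)\Rightarrow(ii)$ is immediate: if $u_\varepsilon=U_{\varepsilon,y}+\varphi$ is a critical point of $I_\varepsilon$ in $H_\varepsilon$ then $I_\varepsilon'(u_\varepsilon)$ annihilates every direction, in particular those tangent to $\mathcal{N}_{\varepsilon,\eta}$, so $(y,\varphi)$ is a critical point of $J_\varepsilon$ by the chain rule. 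The content is in $(ii)\Rightarrow(i)$.

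For $(ii)\Rightarrow(i)$, I would compute the tangent space of $\mathcal{N}_{\varepsilon,\eta}$ at $(y,\varphi)$ and translate ``$(y,\varphi)$ critical for $J_\varepsilon$'' into two sets of equations. Differentiating $J_\varepsilon(y,\varphi)=I_\varepsilon(U_{\varepsilon,y}+\varphi)$ in $\varphi$ along $E_{\varepsilon,y}$ gives that $I_\varepsilon'(u_\varepsilon)$ vanishes on $E_{\varepsilon,y}$; hence there exist Lagrange multipliers $c_1,\dots,c_N\in\mathbb{R}$ (for the $N$ constraints defining $E_{\varepsilon,y}$) with
$$
I_\varepsilon'(u_\varepsilon)=\sum_{i=1}^N c_i\, L_{\varepsilon,y}\!\left(\frac{\partial U_{\varepsilon,y^i}}{\partial y^i}\right),
$$
where $L_{\varepsilon,y}$ is the Riesz-type isomorphism attached to $\langle\cdot,\cdot\rangle_\varepsilon$. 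Differentiating $J_\varepsilon$ in $y$, and using that $\varphi$ depends on $y$ through the constraint (together with $\partial_{y^i}\langle \partial_{y^i}U_{\varepsilon,y},\varphi\rangle_\varepsilon=0$), gives $N$ further scalar identities which, after the standard manipulation, read as a linear system in the $c_i$ whose matrix is a small perturbation of a diagonal matrix with entries of order $\|\partial_{y^i}U_{\varepsilon,y^i}\|^2\sim\varepsilon^N$ (by Lemma \ref{Lem4.1}), while the off-diagonal and error terms are $o(\varepsilon^N)$ — here one uses $\|\varphi\|^2\le\eta\varepsilon^N$, the decay of $U$ and its derivatives, and the $C^2$-smallness estimates on $I_\varepsilon$ near $U_{\varepsilon,y}$. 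Invertibility of this matrix for $\varepsilon,\eta$ small then forces $c_1=\dots=c_N=0$, i.e. $I_\varepsilon'(u_\varepsilon)=0$ in $H_\varepsilon$, which is $(i)$.

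The main obstacle is the second computation: controlling the $y$-derivative of $J_\varepsilon$ when $\varphi=\varphi(y)$ is itself $y$-dependent, and showing that the resulting $N\times N$ coefficient matrix is diagonally dominant at scale $\varepsilon^N$. The double nonlocality is what makes this delicate — besides the usual $(-\Delta)^s$ term one must differentiate the quartic term $\tfrac{b\varepsilon^{4s-N}}{4}\big(\int|(-\Delta)^{s/2}u|^2\big)^2$, producing cross terms of the form $\big(\int (-\Delta)^{s/2}U_{\varepsilon,y}(-\Delta)^{s/2}\varphi\big)\big(\int (-\Delta)^{s/2}U_{\varepsilon,y}(-\Delta)^{s/2}\partial_{y^i}U_{\varepsilon,y}\big)$ that are absent in the fractional Schrödinger case; each of these must be shown to be $o(\varepsilon^N)$ uniformly on $M_{\varepsilon,\eta}$, which is exactly where the polynomial decay bound $U(x)+|\nabla U(x)|\le C(1+|x|^{N+2s})^{-1}$ and a careful bookkeeping of the powers of $\varepsilon$ enter. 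Once these estimates are in place the conclusion is routine linear algebra, and the equivalence follows.
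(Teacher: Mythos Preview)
Your Lagrange-multiplier route is correct, but it is genuinely different from what the paper does. The paper never introduces multipliers $c_i$ and never tests $I_\varepsilon'$ against $\partial_{y^i}U_{\varepsilon,y}$. Instead it argues geometrically: it shows that the map $\Phi:(y,\varphi)\mapsto U_{\varepsilon,y}+\varphi$ is a bijection from $M_{\varepsilon,\eta}$ onto an \emph{open} neighbourhood $N_{\varepsilon,\eta}\subset H_\varepsilon$. This is done by a closest-point projection argument: for $u\in N_{\varepsilon,\eta}$ one minimises $\|u-U_{\varepsilon,y}\|$ over $y$, proves (Claim~1) that any minimiser stays in the interior of the admissible $y$-region, and (Claim~2) that the minimiser is unique; the orthogonality $\varphi=u-U_{\varepsilon,y}\in E_{\varepsilon,y}$ then drops out of the first-order condition for the minimum. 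Once $\Phi$ is a (local) diffeomorphism onto an open set, the equivalence of critical points is immediate from the chain rule, with no further estimates on $I_\varepsilon$ needed.

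Two remarks on your version. First, the paper's approach is insensitive to the Kirchhoff term: Claims~1 and~2 use only the scaling of $\|\partial_{y^i}U_{\varepsilon,y}\|_\varepsilon$ and $\|\partial^2_{y^iy^j}U_{\varepsilon,y}\|_\varepsilon$ from Lemma~\ref{Lem4.1} and the polynomial decay of $U$, so the ``double nonlocality'' you flag as the main obstacle simply does not arise there. Second, and more interestingly, it does not really arise in \emph{your} argument either: once you write $I_\varepsilon'(u_\varepsilon)=\sum_i c_i\langle\,\cdot\,,\partial_{y^i}U_{\varepsilon,y}\rangle_\varepsilon$ and combine it with the $y$-criticality, the resulting $N\times N$ system has coefficient matrix
\[
M_{ij}=\big\langle \partial_{y^j}U_{\varepsilon,y},\partial_{y^i}U_{\varepsilon,y}\big\rangle_\varepsilon-\big\langle \varphi,\partial^2_{y^iy^j}U_{\varepsilon,y}\big\rangle_\varepsilon,
\]
which involves only the $\langle\cdot,\cdot\rangle_\varepsilon$ inner product and the constraint, not the quartic term at all. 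So the cross terms you isolate need not be estimated for this lemma; your argument is in fact cleaner than you suggest. What your approach buys over the paper's is directness (no auxiliary minimisation problem), at the cost of writing out the tangent-space computation; what the paper's buys is that the same closest-point decomposition is reusable elsewhere and that the specific structure of $I_\varepsilon$ never enters.
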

\begin{proof}The arguments follow the approach in \cite{MR2352959,MR1686700}, we give the proof for the nonlocal problem here for completeness. We define for $\eta, \varepsilon>0$
\begin{equation*}
N_{\varepsilon, \eta}=\left\{u \in H_\varepsilon:\|u-U_{\varepsilon,y}\|<\eta \varepsilon^{\frac{1-N}{2}} \text { for some } y \in B_{\delta}(x_0)\right\}.
\end{equation*}
{\bf Claim 1:}
There exists $\eta_{0}, \varepsilon_{0}>0$ such that for $u \in N_{\varepsilon, \eta}$ with $\eta \in\left(0, \eta_{0}\right)$, $\varepsilon \in\left(0, \varepsilon_{0}\right]$, the minimization problem
\begin{equation}\label{eq4.2}
\inf \left\{\|u-U_{\varepsilon,y}\|: y \in D_{\varepsilon, 4 \delta}\right\}
\end{equation}
is achieved in $D_{\varepsilon, 2 \delta}$ and not in $D_{\varepsilon, 4 \delta} \backslash \bar{D}_{\varepsilon, 2 \delta} .$
\par
We argue by contradiction. Assume to the contrary that for some sequences $\varepsilon_{n} \rightarrow 0, \eta_{n} \rightarrow 0, u_{n} \in N_{\varepsilon_{n}, \eta_{n}}$, there exist minimizers $y_{n}$ of \eqref{eq4.2} in $D_{\varepsilon_{n}, 4 \delta} \backslash \bar{D}_{\varepsilon_{n}, 2 \delta} .$ Since $u_{n} \in N_{\varepsilon_{n}, \eta_{n}}$ there exists $\tilde{y}_{n} \in D_{\varepsilon_{n}, \delta}$ with $\left\|u_{n}-U_{\varepsilon,\tilde{y}_{n} }\right\|<\eta_{n} \varepsilon^{\frac{1-N}{2}} .$ It follows that
\begin{equation*}
\varepsilon^{\frac{N-1}{2}}\left\|U_{\varepsilon,\tilde{y}_{n} }-U_{\varepsilon,y_{n} }\right\| \rightarrow 0 \quad \text { as } n \rightarrow \infty.
\end{equation*}
It is easy to check that this implies
\begin{equation*}
\left|y_{n}-\tilde{y}_{n}\right| \rightarrow 0 \quad \text { as } n \rightarrow \infty.
\end{equation*}
 This contradicts the choice of $y_{n}$ and $\tilde{y}_{n}$ and proves Claim 1.
\par
Let $y$ be a minimizer of minimization problem \eqref{eq4.2}, so $y\in D_{\varepsilon, 2 \delta}$. Then
\begin{equation*}
\varphi:=u-U_{\varepsilon,y}
\end{equation*}
satisfies
\begin{equation*}
\left(\varphi, \frac{\partial U_{\varepsilon,y^{i}}}{\partial y^{i}}\right)=0 \quad \text { for } i=1, \ldots, N.
\end{equation*}
{\bf Claim 2:}
For $\eta$ and $\varepsilon$ small enough, the minimization problem \eqref{eq4.2} admits a unique solution.
\par
We proceed by contradiction. Suppose that there exist sequences $\eta_{n}, \varepsilon_{n} \rightarrow 0$ $u_{n} \in N_{\varepsilon_{n}, \eta_{n}}$, such that there are two minimizers $y_{n}$ and $\tilde{y}_{n}$ of \eqref{eq4.2}. By the argument of Claim 1, we have
\begin{equation*}
\left|y_{n}-\tilde{y}_{n}\right| \rightarrow 0 \quad \text { as } n \rightarrow \infty.
\end{equation*}
We show next that $y_{n}=\tilde{y}_{n}$ for sufficiently large $n$. Setting
\begin{equation*}
\varphi_{n}=u_{n}-U_{\varepsilon_{n},y_{n}}, \quad \tilde{\varphi}_{n}=u_{n}-U_{\varepsilon_{n},\tilde{y}_{n}},
\end{equation*}
we have
\begin{equation*}
\left(\varphi_{n}, \frac{\partial  U_{\varepsilon_{n},y^{i}_{n}}}{\partial y^{i}_{n}}\right)=0, \quad\left(\tilde{\varphi}_{n}, \frac{\partial U_{\varepsilon_{n},\tilde{y}^{i}_{n}}}{\partial \bar{y}^{i}_{n}}\right)=0, \quad \text { for } i=1, \ldots, N.
\end{equation*}
hence
\begin{equation}\label{eq4.3}
\sum_{i=1}^{k}\left(U_{\varepsilon_{n},y^{i}_{n}}-U_{\varepsilon_{n},\tilde{y}^{i}_{n}},\frac{\partial  U_{\varepsilon_{n},y^{j}_{n}}}{\partial y^{j}_{n}}\right)=\left(\tilde{\varphi}_{n}, \frac{\partial  U_{\varepsilon_{n},y^{j}_{n}}}{\partial y^{j}_{n}}-\frac{\partial U_{\varepsilon_{n},\tilde{y}^{j}_{n}}}{\partial \bar{y}^{j}_{n}}\right)
\end{equation}
for $j=1, \ldots, N$. Next observe that
\begin{equation*}
\begin{aligned}
&\left(U_{\varepsilon_{n},y^{i}_{n}}-U_{\varepsilon_{n},\tilde{y}^{i}_{n}},\frac{\partial  U_{\varepsilon_{n},y^{j}_{n}}}{\partial y^{j}_{n}}\right) \\
&=-\left(\frac{\partial  U_{\varepsilon_{n},y^{i}_{n}}}{\partial y^{i}_{n}}, \frac{\partial  U_{\varepsilon_{n},y^{j}_{n}}}{\partial y^{j}_{n}}\right)\left(\tilde{y}^{i}_{n}-y^{i}_{n}\right) \\
&+O\left[\left(\frac{\partial^2  U_{\varepsilon_{n},y^{i}_{n}}}{\partial^2 y^{i}_{n}}, \frac{\partial^2  U_{\varepsilon_{n},y^{j}_{n}}}{\partial^2 y^{j}_{n}}\right)\left|\tilde{y}^{i}_{n}-y^{i}_{n}\right|^{2}\right].
\end{aligned}
\end{equation*}
By Lemma \ref{Lem4.1}, the polynomial decay of $U_{\varepsilon,y^{i}}$ and the fact $\left|y^{i}_{n}-y^{i}_{n}\right| \geq \frac{c}{\varepsilon_n}$ (for $i \neq j, c$ independent of $n$ ) we derive the following estimates,
\begin{equation*}
\begin{aligned}
&\left(\frac{\partial  U_{\varepsilon_{n},y^{j}_{n}}}{\partial y^{j}_{n}}, \frac{\partial  U_{\varepsilon_{n},y^{j}_{n}}}{\partial y^{j}_{n}}\right) =C_{j} \varepsilon_{n}^{1-N}+O\left(\varepsilon_{n}^{2-N}\right), \\
&\left(\frac{\partial^2  U_{\varepsilon_{n},y^{j}_{n}}}{\partial^2 y^{j}_{n}}, \frac{\partial  U_{\varepsilon_{n},y^{j}_{n}}}{\partial y^{j}_{n}}\right) =\tilde{C}_{j} \varepsilon_{n}^{1-N}+O\left(\varepsilon_{n}^{2-N}\right), \\
&\left(\frac{\partial  U_{\varepsilon_{n},y^{i}_{n}}}{\partial y^{i}_{n}}, \frac{\partial  U_{\varepsilon_{n},y^{j}_{n}}}{\partial y^{j}_{n}}\right) =O\left(\varepsilon_{n}^{1-N} \frac{1}{1+|\frac{y^i-y^j}{\varepsilon_n}|^{N+2s}}\right), \\
&\left(\frac{\partial^2  U_{\varepsilon_{n},y^{i}_{n}}}{\partial^2 y^{i}_{n}}, \frac{\partial  U_{\varepsilon_{n},y^{j}_{n}}}{\partial y^{j}_{n}}\right) =O\left(\varepsilon_{n}^{1-N} \frac{1}{1+|\frac{y^i-y^j}{\varepsilon_n}|^{N+2s}}\right),
\end{aligned}
\end{equation*}
for $i, j=1, \ldots, N, i \neq j$. Consequently
\begin{equation}\label{eq4.4}
\begin{aligned}
&\left(U_{\varepsilon_{n},y^{i}_{n}}-U_{\varepsilon_{n},\tilde{y}^{i}_{n}},\frac{\partial  U_{\varepsilon_{n},y^{j}_{n}}}{\partial y^{j}_{n}}\right) \\
&=-C_{j} \varepsilon_{n}^{1-N}\left(\tilde{y}^{i}_{n}-y^{i}_{n}\right)+o(1) \sum_{i=1}^{j} \varepsilon_{n}^{1-N}\left(\tilde{y}^{i}_{n}-y^{i}_{n}\right).
\end{aligned}
\end{equation}
Similarly, since $\left\|\varphi_{n}\right\|=o(1) \varepsilon^{(1-N) / 2}$ by assumption we have
\begin{equation}\label{eq4.5}
\begin{aligned}
\left(\tilde{\varphi}_{n}, \frac{\partial  U_{\varepsilon_{n},y^{j}_{n}}}{\partial y^{j}_{n}}-\frac{\partial  U_{\varepsilon_{n},\tilde{y}^{j}_{n}}}{\partial \tilde{y}^{j}_{n}}\right) &=-\left(\tilde{\varphi}_{n}, \frac{\partial^2  U_{\varepsilon_{n},y^{j}_{n}}}{\partial^2 y^{j}_{n}}\right)\left(\tilde{y}^{j}_{n}-y^{j}_{n}\right) \\
&=o(1) \varepsilon^{1-N}\left|\tilde{y}^{j}_{n}-y^{j}_{n}\right|.
\end{aligned}
\end{equation}
Combining \eqref{eq4.3}-\eqref{eq4.5} we see that
\begin{equation*}
\left|\tilde{y}^{j}_{n}-y^{j}_{n}\right|=o(1) \sum_{i=1}^{k}\left|\tilde{y}^{j}_{n}-y^{j}_{n}\right| \quad \text { for } j=1, \ldots, N.
\end{equation*}
Therefore $\bar{y}^{j}_{n}=y^{j}_{n}$ as $n$ large enough, contradicting our assumption.
\par
By Claim 2, there exist $\eta_{0}, \varepsilon_{0}>0$, such that if $\eta \in\left(0, \eta_{0}\right]$ and $\varepsilon \in\left(0, \varepsilon_{0}\right]$, then any $u \in N_{\varepsilon, \eta}$ can be uniquely written in the form
\begin{equation*}
u=U_{\varepsilon,y}+\varphi \quad \text { with }(y, \varphi) \in M_{\varepsilon, \eta}
\end{equation*}
For a given $(y, \varphi) \in M_{\varepsilon, \eta}$, let us define
\begin{equation*}
u=u_{y, \varphi}=U_{\varepsilon,y}+\varphi.
\end{equation*}
Then $(y, \varphi)$ is a critical point of the functional
\begin{equation*}
M_{\varepsilon, \eta} \rightarrow \mathbb{R}, \quad(y, \varphi) \mapsto I_{\varepsilon}\left( U_{\varepsilon,y}+\varphi\right)
\end{equation*}
if and only if
\begin{equation*}
u=U_{\varepsilon,y}+\varphi
\end{equation*}
is a critical point of $I_{\varepsilon}$ in $H_{\varepsilon}$.
\end{proof}
\par
Now, in order to realize {\bf Step 1}, we expand $J_{\varepsilon}(y, \cdot)$ near $\varphi=0$ for each fixed $y$ as follows:
\begin{equation*}
J_{\varepsilon}(y, \varphi)=J_{\varepsilon}(y, 0)+l_{\varepsilon}(\varphi)+\frac{1}{2}\left\langle\mathcal{L}_{\varepsilon} \varphi, \varphi\right\rangle+R_{\varepsilon}(\varphi)
\end{equation*}
where $J_{\varepsilon}(y, 0)=I_{\varepsilon}\left(U_{\varepsilon, y}\right)$, and $l_{\varepsilon}, \mathcal{L}_{\varepsilon}$ and $R_{\varepsilon}$ are defined for $\varphi, \psi \in H_{\varepsilon}$ as follows:
\begin{equation}\label{eq4.6}
\begin{aligned}
l_{\varepsilon}(\varphi) &=\left\langle I_{\varepsilon}^{\prime}\left(U_{\varepsilon, y}\right), \varphi\right\rangle \\
&=\left\langle U_{\varepsilon, y}, \varphi\right\rangle_{\varepsilon}+b\varepsilon^{4s-N}\left(\int_{\mathbb{R}^{N}}\left| (-\Delta)^{\frac{s}{2}}U_{\varepsilon, y}\right|^{2}dx\right) \int_{\mathbb{R}^{N}} (-\Delta)^{\frac{s}{2}} U_{\varepsilon, y} \cdot (-\Delta)^{\frac{s}{2}} \varphi dx-\int_{\mathbb{R}^{N}} U_{\varepsilon, y}^{p} \varphi dx
\end{aligned}
\end{equation}
and $\mathcal{L}_{\varepsilon}: L^{2}\left(\mathbb{R}^{N}\right) \rightarrow L^{2}\left(\mathbb{R}^{N}\right)$ is the bilinear form around $U_{\varepsilon, y}$ defined by
\begin{equation*}
\begin{aligned}
\left\langle\mathcal{L}_{\varepsilon} \varphi, \psi\right\rangle &=\left\langle I_{\varepsilon}^{\prime \prime}\left(U_{\varepsilon, y}\right)[\varphi], \psi\right\rangle \\
&=\langle\varphi, \psi\rangle_{\varepsilon}+ b\varepsilon^{4s-N}\left(\int_{\mathbb{R}^{N}}\left|(-\Delta)^{\frac{s}{2}} U_{\varepsilon, y}\right|^{2}dx\right) \int_{\mathbb{R}^{N}} (-\Delta)^{\frac{s}{2}} \varphi \cdot (-\Delta)^{\frac{s}{2}} \psi dx\\
&+2\varepsilon^{4s-N}b\left(\int_{\mathbb{R}^{N}} (-\Delta)^{\frac{s}{2}} U_{\varepsilon, y} \cdot (-\Delta)^{\frac{s}{2}} \varphi dx\right)\left(\int_{\mathbb{R}^{N}} (-\Delta)^{\frac{s}{2}} U_{\varepsilon, y} \cdot (-\Delta)^{\frac{s}{2}} \psi dx\right)-p \int_{\mathbb{R}^{N}} U_{\varepsilon, y}^{p-1} \varphi \psi dx
\end{aligned}
\end{equation*}
and $R_{\varepsilon}$ denotes the second order reminder term given by
\begin{equation}\label{eq4.7}
R_{\varepsilon}(\varphi)=J_{\varepsilon}(y, \varphi)-J_{\varepsilon}(y, 0)-l_{\varepsilon}(\varphi)-\frac{1}{2}\left\langle\mathcal{L}_{\varepsilon} \varphi, \varphi\right\rangle.
\end{equation}
We remark that $R_{\varepsilon}$ belongs to $C^{2}\left(H_{\varepsilon}\right)$ since so is every term in the right hand side of \eqref{eq4.7}.
In the rest of this section, we consider $l_{\varepsilon}: H_{\varepsilon} \rightarrow \mathbb{R}$ and $R_{\varepsilon}: H_{\varepsilon} \rightarrow \mathbb{R}$ and give some elementary estimates.
\par
We will repeatedly use the following type of Sobolev inequality:
\begin{Lem}\label{Lem4.3}
For any $2 \leq q \leq 2^*_s$, there exists a constant $C>0$ depending only on $N, V, a$ and $q$, but independent of $\varepsilon$, such that
\begin{equation}\label{eq4.8}
\|\varphi\|_{L^{q}\left(\mathbb{R}^{N}\right)} \leq C \varepsilon^{\frac{N}{q}-\frac{N}{2}}\|\varphi\|_{\varepsilon}
\end{equation}
holds for all $\varphi \in H_{\varepsilon} .$
\end{Lem}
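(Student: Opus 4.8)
The plan is to deduce \eqref{eq4.8} from the $\varepsilon$-free fractional Sobolev inequality by the dilation $x\mapsto\varepsilon x$, which converts $\|\cdot\|_\varepsilon$ into the standard $H^s$-norm up to explicit powers of $\varepsilon$, and then to absorb the potential using $(V_1)$.

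First I would record the unscaled inequality: for every $2\le q\le 2^*_s$ there is a constant $C_0=C_0(N,s,q)>0$ such that
\begin{equation*}
\|\psi\|_{L^q(\mathbb{R}^N)}^2\le C_0\Big(\|(-\Delta)^{\frac{s}{2}}\psi\|_2^2+\|\psi\|_2^2\Big)\qquad\text{for all }\psi\in H^s(\mathbb{R}^N).
\end{equation*}
This is immediate from \eqref{eq1.8} with $p+1=q$: writing $X=\|(-\Delta)^{\frac{s}{2}}\psi\|_2^2$ and $Y=\|\psi\|_2^2$, the two exponents in \eqref{eq1.8} are $\theta_1=\frac{N(q-2)}{4s}\ge0$ and $\theta_2=\frac{(q-2)(2s-N)}{4s}+1$, which is $\ge0$ as well (this is exactly where the restriction $q\le 2^*_s$ is used), and $\theta_1+\theta_2=\frac q2$; hence $X^{\theta_1}Y^{\theta_2}\le(X+Y)^{\theta_1+\theta_2}=(X+Y)^{q/2}$, and raising to the power $2/q$ gives the claim (with $C_0=\mathcal{S}^{2/q}$).

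Next I would carry out the scaling. Fix $\varphi\in H_\varepsilon$ and set $\psi(x):=\varphi(\varepsilon x)\in H^s(\mathbb{R}^N)$. Using the Fourier-side identity $\widehat\psi(\xi)=\varepsilon^{-N}\widehat\varphi(\xi/\varepsilon)$ one computes
\begin{equation*}
\|\psi\|_q^q=\varepsilon^{-N}\|\varphi\|_q^q,\qquad \|\psi\|_2^2=\varepsilon^{-N}\|\varphi\|_2^2,\qquad \|(-\Delta)^{\frac{s}{2}}\psi\|_2^2=\varepsilon^{2s-N}\|(-\Delta)^{\frac{s}{2}}\varphi\|_2^2.
\end{equation*}
Inserting these into the displayed inequality and multiplying through by $\varepsilon^{2N/q}$ yields
\begin{equation*}
\|\varphi\|_q^2\le C_0\,\varepsilon^{\frac{2N}{q}-N}\Big(\varepsilon^{2s}\|(-\Delta)^{\frac{s}{2}}\varphi\|_2^2+\|\varphi\|_2^2\Big).
\end{equation*}
Finally, by $(V_1)$ we have $V_0:=\inf_{\mathbb{R}^N}V>0$, so $\|\varphi\|_2^2\le V_0^{-1}\int_{\mathbb{R}^N}V\varphi^2\,dx$ and $\varepsilon^{2s}\|(-\Delta)^{\frac{s}{2}}\varphi\|_2^2\le a^{-1}\int_{\mathbb{R}^N}\varepsilon^{2s}a|(-\Delta)^{\frac{s}{2}}\varphi|^2\,dx$, whence the bracket is at most $\max\{a^{-1},V_0^{-1}\}\|\varphi\|_\varepsilon^2$. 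Since $\frac{2N}{q}-N=2\big(\frac Nq-\frac N2\big)$, taking square roots gives \eqref{eq4.8} with $C=\big(C_0\max\{a^{-1},V_0^{-1}\}\big)^{1/2}$, which depends only on $N,s,q,a$ and $V_0$, i.e. only on $N,V,a$ and $q$.

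I do not expect a serious obstacle here. The only two points that deserve a moment's care are the correct homogeneity of $(-\Delta)^{s/2}$ under the dilation $x\mapsto\varepsilon x$ (the factor $\varepsilon^{2s-N}$ above, which I would verify on the Fourier side or from the singular-integral representation), and the verification that the exponent $\theta_2$ in \eqref{eq1.8} stays nonnegative all the way up to the critical value $q=2^*_s$ (where it vanishes), so that the elementary estimate $X^{\theta_1}Y^{\theta_2}\le(X+Y)^{\theta_1+\theta_2}$ applies uniformly on the whole range $2\le q\le 2^*_s$; everything else is bookkeeping of powers of $\varepsilon$.
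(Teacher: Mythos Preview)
Your proof is correct and follows essentially the same route as the paper: rescale $\varphi$ by $x\mapsto \varepsilon x$, apply the $\varepsilon$-free fractional Sobolev embedding $H^s\hookrightarrow L^q$, undo the scaling to pick up the factor $\varepsilon^{\frac{2N}{q}-N}$, and then use $(V_1)$ and $a>0$ to replace the standard $H^s$-norm by $\|\cdot\|_\varepsilon$. The only difference is cosmetic: you derive the Sobolev embedding explicitly from the Gagliardo--Nirenberg inequality \eqref{eq1.8} and track the constants, whereas the paper simply invokes the embedding directly.
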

\begin{proof}
By setting $\tilde{\varphi}(x)=\varphi(\varepsilon x)$ and using Sobolev inequality, we deduce
\begin{equation*}
\begin{aligned}
\int_{\mathbb{R}^{N}}|\varphi|^{q}dx &=\varepsilon^{N} \int_{\mathbb{R}^{N}}|\tilde{\varphi}|^{q}dx \\
& \leq C_{1} \varepsilon^{N}\left(\int_{\mathbb{R}^{N}}\left(|(-\Delta)^{\frac{s}{2}} \tilde{\varphi}|^{2}+|\tilde{\varphi}|^{2}\right)dx\right)^{q / 2} \\
&=C_{1} \varepsilon^{N-\frac{Nq}{2}}\left(\int_{\mathbb{R}^{N}}\left(\varepsilon^{2s}|(-\Delta)^{\frac{s}{2}} \varphi|^{2}+|\varphi|^{2}\right)dx\right)^{q / 2} \\
& \leq C_{2} \varepsilon^{N-\frac{Nq}{2}}\|\varphi\|_{\varepsilon}^{q}
\end{aligned}
\end{equation*}
where $C_{1}$ is the best constant for the fractional Sobolev embedding $H^{s}\left(\mathbb{R}^{N}\right) \subset L^{q}\left(\mathbb{R}^{N}\right)$, and $C_{2}>0$ depends only on $n, a, q$ and $V$.
\end{proof}

\begin{Lem}\cite{MR2595734}\label{Lem4.6}
For any constant $0<\sigma \leq \min \{\alpha, \beta\}$, there is a constant $C>0$, such that
\begin{equation*}
\frac{1}{\left(1+\left|y-x^{i}\right|\right)^{\alpha}} \frac{1}{\left(1+\left|y-x^{j}\right|\right)^{\beta}} \leq \frac{C}{\left|x^{i}-x^{j}\right|^{\sigma}}\left(\frac{1}{\left(1+\left|y-x^{i}\right|\right)^{\alpha+\beta-\sigma}}+\frac{1}{\left(1+\left|y-x^{j}\right|\right)^{\alpha+\beta-\sigma}}\right)
\end{equation*}
where $\alpha, \beta>0$ are two constants.
\end{Lem}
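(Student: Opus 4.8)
\medskip\noindent\textbf{Proof strategy.}\quad The inequality is purely elementary and pointwise, so the plan is a short two-case analysis driven by the triangle inequality. The guiding observation is that, for any fixed $y$, at least one of the two distances $|y-x^{i}|$ and $|y-x^{j}|$ is comparable to $|x^{i}-x^{j}|$; this lets one convert a power of that larger distance into a power of $|x^{i}-x^{j}|^{-1}$, with the bookkeeping arranged so that the surviving exponents add up to $\alpha+\beta-\sigma$.

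First I would dispose of the trivial case $x^{i}=x^{j}$, in which the right-hand side is $+\infty$ and there is nothing to prove. Assuming $x^{i}\neq x^{j}$, set $d=|x^{i}-x^{j}|>0$ and, for the given $y$, $r=|y-x^{i}|$, $t=|y-x^{j}|$, so that $d\le r+t$ by the triangle inequality. Since the claimed estimate is invariant under the simultaneous interchange $(x^{i},\alpha)\leftrightarrow(x^{j},\beta)$, I may assume $r\ge t$. Then $d\le r+t\le 2r\le 2(1+r)$, hence $d^{\sigma}\le 2^{\sigma}(1+r)^{\sigma}$; and because $\alpha-\sigma\ge 0$ (as $\sigma\le\min\{\alpha,\beta\}$) and $1+t\le 1+r$, also $(1+t)^{\alpha-\sigma}\le(1+r)^{\alpha-\sigma}$. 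Multiplying these two bounds gives the key inequality $d^{\sigma}(1+t)^{\alpha-\sigma}\le 2^{\sigma}(1+r)^{\alpha}$.

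Dividing this key inequality by $d^{\sigma}(1+r)^{\alpha}(1+t)^{\alpha-\sigma}$ and then multiplying by $(1+t)^{-\beta}$, one obtains
\[
\frac{1}{(1+r)^{\alpha}(1+t)^{\beta}}\ \le\ \frac{2^{\sigma}}{d^{\sigma}}\cdot\frac{1}{(1+t)^{\alpha+\beta-\sigma}}\ \le\ \frac{2^{\sigma}}{d^{\sigma}}\left(\frac{1}{(1+r)^{\alpha+\beta-\sigma}}+\frac{1}{(1+t)^{\alpha+\beta-\sigma}}\right),
\]
the last step merely adding a nonnegative term. Taking $C=2^{\sigma}$ settles the case $r\ge t$; the case $t\ge r$ is identical after exchanging the roles of $(x^{i},\alpha)$ and $(x^{j},\beta)$, and there it is the summand $(1+|y-x^{i}|)^{-(\alpha+\beta-\sigma)}$ that absorbs the bound.

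There is no genuine obstacle here; the only points requiring a little care are: (a) one must keep the factor $(1+t)^{\beta}$ intact rather than crudely bounding it below by $1$ — this is why the ``key inequality'' is formulated with exponent $\alpha-\sigma$ instead of $\alpha$; and (b) one must keep track of which distance is the larger one in order to pick the correct summand on the right-hand side. The hypothesis $\sigma\le\min\{\alpha,\beta\}$ is used precisely at the monotonicity step $(1+t)^{\alpha-\sigma}\le(1+r)^{\alpha-\sigma}$ (respectively $(1+r)^{\beta-\sigma}\le(1+t)^{\beta-\sigma}$ in the other case).
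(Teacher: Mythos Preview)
Your argument is correct and complete: the two-case split via the triangle inequality, the key bound $d^{\sigma}(1+t)^{\alpha-\sigma}\le 2^{\sigma}(1+r)^{\alpha}$, and the symmetry reduction all go through exactly as you describe, and $C=2^{\sigma}$ works. Note that the paper does not prove this lemma at all; it is simply quoted from \cite{MR2595734}, so there is no ``paper's proof'' to compare against. Your self-contained elementary derivation is a welcome addition.
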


\begin{Lem}\label{Lem4.4}
Assume that $V$ satisfies $(V_1)$ and $(V_2)$. Then, there exists a constant $C>0$, independent of $\varepsilon$, such that for any $y \in B_{1}(0)$, there holds
\begin{equation*}
\left|l_{\varepsilon}(\varphi)\right| \leq C \varepsilon^{\frac{N}{2}}\left(\varepsilon^{\alpha}+(|V(y)-V(x_0)|)\right)\|\varphi\|_{\varepsilon}
\end{equation*}
for $\varphi \in H_{\varepsilon}$. Here $\alpha$ denotes the order of the H\"{o}lder continuity of $V$ in $B_{r_0}(0)$.
\end{Lem}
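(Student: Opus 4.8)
The plan is to estimate $l_\varepsilon(\varphi)$ by observing that $U_{\varepsilon,y}$ \emph{almost} solves the equation $\varepsilon^{2s}a(-\Delta)^s u + \varepsilon^{4s-N}b(\int|(-\Delta)^{s/2}u|^2)(-\Delta)^su + V(y)u = u^p$ — it would be an exact solution if $V(x_0)$ were replaced by the constant $V(y)$ throughout, thanks to the scaling relation from the proof of Theorem \ref{Thm1.1} and the fact that $U$ solves \eqref{eq1.2} with $m=V(x_0)$. Wait — more carefully, $U$ is the ground state of \eqref{eq1.2} with $m = V(x_0)$, so after the $\varepsilon$-scaling $U_{\varepsilon,y}$ solves the perturbed equation with potential $\equiv V(x_0)$, \emph{not} with $V(y)$. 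So I would split $l_\varepsilon(\varphi) = \langle I_\varepsilon'(U_{\varepsilon,y}),\varphi\rangle$ into two pieces: the ``error from using $V(x_0)$ in place of $V(x)$'' near the concentration point and ``in the far region,'' exploiting the polynomial decay $U(x)+|\nabla U(x)|\le C(1+|x|^{N+2s})^{-1}$.

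First I would write, using that $U_{\varepsilon,y}$ solves $\varepsilon^{2s}a(-\Delta)^sU_{\varepsilon,y}+\varepsilon^{4s-N}b(\int|(-\Delta)^{s/2}U_{\varepsilon,y}|^2)(-\Delta)^sU_{\varepsilon,y}+V(x_0)U_{\varepsilon,y}=U_{\varepsilon,y}^p$,
\[
l_\varepsilon(\varphi)=\int_{\mathbb{R}^N}\big(V(x)-V(x_0)\big)U_{\varepsilon,y}\,\varphi\,dx.
\]
Then I would break the integral into the region $|x-y|\le r_0/2$ (say), where $|V(x)-V(x_0)|\le |V(x)-V(y)|+|V(y)-V(x_0)| \le C|x-y|^\alpha + |V(y)-V(x_0)|$ by the $\alpha$-Hölder continuity of $V$ near $x_0$, and the complementary far region, where $|V(x)-V(x_0)|\le 2\|V\|_\infty$ is merely bounded but $U_{\varepsilon,y}$ is tiny. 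On the near region, the $C|x-y|^\alpha$ term scales like $\varepsilon^\alpha$ after the change of variables $x=y+\varepsilon z$ (using $|U(z)|\le C(1+|z|)^{-(N+2s)}$ and $\alpha<\frac{N+4s}{2}$ to guarantee integrability of $|z|^\alpha U(z)$ against an $L^2$ factor), and the $|V(y)-V(x_0)|$ term is handled by pulling it out and using $\|U_{\varepsilon,y}\|_{L^2}=\varepsilon^{N/2}\|U\|_{L^2}$. In each case one finishes with Hölder's inequality and Lemma \ref{Lem4.3} (with $q=2$, or $q=\frac{2N}{N+2s}$ for the potential-weighted $L^2$ pairing) to convert $\|\varphi\|_{L^q}$ into $\varepsilon^{N/q-N/2}\|\varphi\|_\varepsilon$, so that the $\varepsilon$-powers combine to give the prefactor $\varepsilon^{N/2}$ and the bracket $(\varepsilon^\alpha+|V(y)-V(x_0)|)$.

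The far region is where I would need the polynomial decay most decisively: there $|x-y|\ge r_0/2$, so $U_{\varepsilon,y}(x)=U((x-y)/\varepsilon)\le C\varepsilon^{N+2s}|x-y|^{-(N+2s)}\le C\varepsilon^{N+2s}(r_0/2)^{-(N+2s)}$ pointwise, and the whole far contribution is $O(\varepsilon^{N+2s})\|\varphi\|_{L^2}=O(\varepsilon^{N+2s}\cdot\varepsilon^{0})\cdot\varepsilon^{0}$ — actually $O(\varepsilon^{N+2s})\varepsilon^{-0}\|\varphi\|_\varepsilon$ after Lemma \ref{Lem4.3}, which is $o(\varepsilon^{N/2+\alpha})$ since $2s>\alpha-N/2$ is comfortably implied by $\alpha<\frac{N+4s}{2}$; hence it is absorbed into the $\varepsilon^\alpha$ term. \textbf{The main obstacle} I expect is the careful bookkeeping of the $\varepsilon$-exponents coming from three different sources — the scaling of $U_{\varepsilon,y}$ itself, the Sobolev inequality \eqref{eq4.8} which produces negative powers $\varepsilon^{N/q-N/2}$, and the weight $|x-y|^\alpha$ which contributes $\varepsilon^\alpha$ — and checking that the condition $\alpha<\frac{N+4s}{2}$ (together with $s>N/4$, so $N+2s>N/2$) is exactly what is needed to keep all the weighted integrals of $U$ convergent and to make the far-region remainder negligible; the inequalities themselves (Hölder, Sobolev via Lemma \ref{Lem4.3}, the decay bound) are all routine once the splitting is set up.
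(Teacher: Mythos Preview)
Your approach is essentially the same as the paper's: both reduce $l_\varepsilon(\varphi)$ to $\int_{\mathbb{R}^N}(V(x)-V(x_0))\,U_{\varepsilon,y}\,\varphi\,dx$, split $V(x)-V(x_0)=(V(x)-V(y))+(V(y)-V(x_0))$, and estimate each piece by Cauchy--Schwarz together with the decay of $U$ and the condition $\alpha<\frac{N+4s}{2}$. The only difference is that the paper applies the H\"older bound $|V(x)-V(y)|\le C|x-y|^\alpha$ over all of $\mathbb{R}^N$ without an explicit near/far decomposition (the tail being absorbed into the same weighted integral $\int|z|^{2\alpha}U(z)^2\,dz$), whereas you separate the region $|x-y|>r_0/2$ and kill it directly via the pointwise decay of $U_{\varepsilon,y}$; your version is a shade more careful given that $(V_2)$ only asserts H\"older continuity in $B_{r_0}(x_0)$, but it is not materially different.
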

\begin{proof}Since $U$ is the unique positive solution of Eq. \eqref{eq1.2} and the definition of $U_{\varepsilon,y}$, we can write
\begin{equation*}
l_{\varepsilon}(\varphi)=\int_{\mathbb{R}^{N}}\left(V(x)-V\left(x_0\right)\right) U_{\varepsilon, y} \varphi dx.
\end{equation*}
Using the conditions imposed on $V(x)$ and $\alpha<\frac{N+4 s}{2}$, we have
\begin{equation}\label{eq4.9}
\begin{aligned}
&\left| \int_{\mathbb{R}^{N}}\left(V(x)-V\left(y^i\right)\right) U_{\varepsilon, y} \varphi dx \right| \\
& \leq\left(\int_{\mathbb{R}^{N}}\left(\left(V(x)-V\left(y\right)\right) U_{\varepsilon, y}\right)^{2}d x\right)^{\frac{1}{2}}\|\varphi\|_{\varepsilon} \\
& \leq\left(\int_{\mathbb{R}^{N}}\left(V\left(\varepsilon x+y\right)-V\left(y\right) U\right)^{2} \varepsilon^{N}d y\right)^{\frac{1}{2}}\|\varphi\|_{\varepsilon} \\
& \leq \varepsilon^{\frac{N}{2}+\alpha}\left(\int_{\mathbb{R}^{N}}|y|^{2 \alpha} U^{2}d y\right)^{\frac{1}{2}}\|\varphi\|_{\varepsilon} \\
& \leq C \varepsilon^{\frac{N}{2}+\alpha}\left(C+\left(\int_{\mathbb{R}^{N} \backslash B_{R}(0)} \frac{|y|^{2 \alpha}}{1+|y|^{2(N+2 s)}}d y\right)^{\frac{1}{2}}\|\varphi\|_{\varepsilon}\right.\\
& \leq C \varepsilon^{\frac{N}{2}+\alpha}\|\varphi\|_{\varepsilon}
\end{aligned}
\end{equation}
and
\begin{equation}\label{eq4.10}
\begin{aligned}
&\left|\int_{\mathbb{R}^{N}}\left(V\left(y\right)-V\left(x_{0}\right)\right) U_{\varepsilon, y} \varphi dx\right| \\
& \leq C \varepsilon^{\frac{N}{2}}\left|V\left(y\right)-V\left(x_{0}\right)\right|\left(\int_{\mathbb{R}^{N}} U^{2}d x\right)^{\frac{1}{2}}\|\varphi\|_{\varepsilon} \\
& \leq C \varepsilon^{\frac{N}{2}}\left|V\left(y\right)-V\left(x_{0}\right)\right|\|\varphi\|_{\varepsilon}.
\end{aligned}
\end{equation}
Combining \eqref{eq4.9} and \eqref{eq4.10}, we obtain
\begin{equation}\label{eq4.11}
\begin{aligned}
&\left|\int_{\mathbb{R}^{N}}\left(V(x)-V\left(x_{0}\right)\right) U_{\varepsilon, y} \varphi dx \right| \\
&\quad=\left|\int_{\mathbb{R}^{N}}\left(V(x)-V\left(y\right)\right) U_{\varepsilon, y} \varphi d x+\int_{\mathbb{R}^{N}} \left(V\left(y\right)-V\left(x_{0}\right)\right) U_{\varepsilon, y} \varphi d x\right| \\
&\quad \leq C \varepsilon^{\frac{N}{2}+\alpha}\|\varphi\|_{\varepsilon}+C \varepsilon^{\frac{N}{2}} \left|V\left(y\right)-V\left(x^{0}\right)\right|\|\varphi\|_{\varepsilon}.
\end{aligned}
\end{equation}
It follows form \eqref{eq4.11} that the result has been proved.
\end{proof}
\par
Next we give estimates for $R_{\varepsilon}$ and its derivatives $R_{\varepsilon}^{(i)}$ for $i=1,2$.
\begin{Lem}\label{Lem4.5}
There exists a constant $C>0$, independent of $\varepsilon$ and $b$, such that for $i \in\{0,1,2\}$, there hold
\begin{equation*}
\left\|R_{\varepsilon}^{(i)}(\varphi)\right\| \leq C \varepsilon^{-\frac{N(p-1)}{2}}\|\varphi\|_{\varepsilon}^{p+1-i}+C(b+1) \varepsilon^{-\frac{N}{2}}\left(1+\varepsilon^{-\frac{N}{2}}\|\varphi\|_{\varepsilon}\right)\|\varphi\|_{\varepsilon}^{N-i}
\end{equation*}
for all $\varphi \in H_{\varepsilon}$.
\end{Lem}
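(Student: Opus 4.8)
The plan is to read $R_\varepsilon$ off from its definition \eqref{eq4.7}, split it into the contribution of the Kirchhoff quartic term and that of the power nonlinearity, bound each (and its first two derivatives) by scaling, and add up. Write $A(u)=\int_{\mathbb{R}^N}|(-\Delta)^{s/2}u|^2dx$ and $B(u,v)=\int_{\mathbb{R}^N}(-\Delta)^{s/2}u\,(-\Delta)^{s/2}v\,dx$. Since the quadratic part $\tfrac12\|\cdot\|_\varepsilon^2$ of $I_\varepsilon$ and its cross term are reproduced exactly by $J_\varepsilon(y,0)+l_\varepsilon(\varphi)+\tfrac12\langle\mathcal L_\varepsilon\varphi,\varphi\rangle$, and since $A(U_{\varepsilon,y}+\varphi)=A(U_{\varepsilon,y})+2B(U_{\varepsilon,y},\varphi)+A(\varphi)$, squaring this, Taylor-expanding $|U_{\varepsilon,y}+\varphi|^{p+1}$ to second order, and discarding in each the terms of order $\le 2$ in $\varphi$ (which are exactly those already collected in $J_\varepsilon(y,0)$, $l_\varepsilon$ and $\mathcal L_\varepsilon$) leaves
\begin{equation*}
\begin{aligned}
R_\varepsilon(\varphi)&=\frac{b\varepsilon^{4s-N}}{4}\Bigl(4B(U_{\varepsilon,y},\varphi)A(\varphi)+A(\varphi)^2\Bigr)\\
&\quad-\frac{1}{p+1}\int_{\mathbb{R}^N}\Bigl(|U_{\varepsilon,y}+\varphi|^{p+1}-U_{\varepsilon,y}^{p+1}-(p+1)U_{\varepsilon,y}^{p}\varphi-\tfrac{p(p+1)}{2}U_{\varepsilon,y}^{p-1}\varphi^2\Bigr)dx\\
&=:\mathcal K_\varepsilon(\varphi)+\mathcal N_\varepsilon(\varphi).
\end{aligned}
\end{equation*}
Here $\mathcal K_\varepsilon$ is a polynomial in $\varphi$ of degrees $3$ and $4$, while $\mathcal N_\varepsilon$ and its first two derivatives are integrals of pointwise Taylor remainders of $t\mapsto|U_{\varepsilon,y}+t|^{p+1}$; so it suffices to estimate these two pieces and their $i$-th derivatives for $i=0,1,2$.

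For $\mathcal K_\varepsilon$ I would use two scalings. From $U_{\varepsilon,y}(x)=U((x-y)/\varepsilon)$ one gets $A(U_{\varepsilon,y})=\varepsilon^{N-2s}A(U)$, and from the definition of $\|\cdot\|_\varepsilon$ together with $(V_1)$ one gets $\|(-\Delta)^{s/2}\varphi\|_2\le a^{-1/2}\varepsilon^{-s}\|\varphi\|_\varepsilon$; Cauchy--Schwarz then yields $|B(U_{\varepsilon,y},\varphi)|\le C\varepsilon^{(N-4s)/2}\|\varphi\|_\varepsilon$ and $|A(\varphi)|\le C\varepsilon^{-2s}\|\varphi\|_\varepsilon^2$, with constants independent of $\varepsilon$, of $b$, and of $y\in B_1(0)$. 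Plugging these into $\mathcal K_\varepsilon$ and its derivatives and collecting the powers of $\varepsilon$, the cubic part contributes $\le Cb\,\varepsilon^{-N/2}\|\varphi\|_\varepsilon^{3-i}$ and the quartic part $\le Cb\,\varepsilon^{-N}\|\varphi\|_\varepsilon^{4-i}=Cb\,\varepsilon^{-N/2}\bigl(\varepsilon^{-N/2}\|\varphi\|_\varepsilon\bigr)\|\varphi\|_\varepsilon^{3-i}$, so $\|\mathcal K_\varepsilon^{(i)}(\varphi)\|\le Cb\,\varepsilon^{-N/2}\bigl(1+\varepsilon^{-N/2}\|\varphi\|_\varepsilon\bigr)\|\varphi\|_\varepsilon^{3-i}$.

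For $\mathcal N_\varepsilon$ I would split according to $1<p\le 2$ versus $p>2$. If $1<p\le 2$, the second-order Taylor remainder of $t\mapsto|a+t|^{p+1}$ is $\le C|t|^{p+1}$, its first-derivative remainder $\le C|t|^{p}$ and its second-derivative remainder $\le C|t|^{p-1}$; integrating, applying H\"older and then Lemma \ref{Lem4.3} with $q=p+1$ (permissible since $2<p+1<2^*_s$) turns $\int|\varphi|^{p+1}$, $\int|\varphi|^{p}|\psi|$, $\int|\varphi|^{p-1}|\psi_1||\psi_2|$ into $\varepsilon^{N-N(p+1)/2}=\varepsilon^{-N(p-1)/2}$ times the product of the relevant $\|\cdot\|_\varepsilon$'s, which is the first term $C\varepsilon^{-N(p-1)/2}\|\varphi\|_\varepsilon^{p+1-i}$ of the claim. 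If $p>2$, the remainders acquire an extra summand $C\,a^{p-2}|t|^{3-i}$ (read in the appropriate multilinear sense for $i=1,2$); since $\|U_{\varepsilon,y}\|_\infty=\|U\|_\infty<\infty$ by the decay estimate of Theorem \ref{Thm1.1}, H\"older and Lemma \ref{Lem4.3} with $q=3$ (legitimate since $s>N/4$ forces $3\le 2^*_s$) bound $\int U_{\varepsilon,y}^{p-2}|\varphi|^{3-i}\cdot(\text{test functions})$ by $C\varepsilon^{-N/2}\|\varphi\|_\varepsilon^{3-i}$ times the relevant norms. Adding the $\mathcal K_\varepsilon$ and $\mathcal N_\varepsilon$ bounds, absorbing the $p>2$ contribution into the $C(b+1)\varepsilon^{-N/2}(\cdots)$ term, and using $N\le 3$ (so that $\|\varphi\|_\varepsilon^{3-i}\le\|\varphi\|_\varepsilon^{N-i}$ on the range $\|\varphi\|_\varepsilon\le 1$ relevant to $M_{\varepsilon,\eta}$) gives the stated inequality; all constants are uniform in $y\in B_1(0)$ because the $L^q$ and $L^\infty$ norms of $U$ are translation invariant.

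The step I expect to be the main obstacle is bookkeeping rather than conceptual: keeping the powers of $\varepsilon$ consistent under the two scalings used simultaneously --- the explicit parabolic rescaling for $U_{\varepsilon,y}$ and $\nabla U_{\varepsilon,y}$ (using the polynomial decay recalled in Section 4) versus the scaled Sobolev inequality of Lemma \ref{Lem4.3} for $\varphi$ and $\psi$ --- together with organizing the case distinction $1<p\le 2$ versus $p>2$ so that the borderline exponents in the pointwise Taylor remainders are exactly right and the resulting $\varepsilon^{-N/2}\|\varphi\|_\varepsilon^{3-i}$ terms align with the ones produced by the Kirchhoff quartic.
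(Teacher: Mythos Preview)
Your proof is correct and follows essentially the same route as the paper's: split $R_\varepsilon$ into the Kirchhoff quartic remainder (your $\mathcal K_\varepsilon$, the paper's $A_1$) and the power-nonlinearity Taylor remainder (your $\mathcal N_\varepsilon$, the paper's $-A_2$), bound the former via the scalings $\|(-\Delta)^{s/2}U_{\varepsilon,y}\|_2=C\varepsilon^{(N-2s)/2}$ and $\|(-\Delta)^{s/2}\varphi\|_2\le C\varepsilon^{-s}\|\varphi\|_\varepsilon$, and the latter via the case split $1<p\le 2$ versus $p>2$ together with Lemma~\ref{Lem4.3}. The exponent $N-i$ in the statement, which you rightly flag and reconcile via $N\le 3$ (a consequence of $s>N/4$), appears verbatim in the paper's own proof as well and is in all likelihood a typo for $3-i$.
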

\begin{proof}
By the definition of $R_{\varepsilon}$ in \eqref{eq4.7}, we have
\begin{equation*}
R_{\varepsilon}(\varphi)=A_{1}(\varphi)-A_{2}(\varphi)
\end{equation*}
where
\begin{equation*}
A_{1}(\varphi)=\frac{b \varepsilon^{4s-N}}{4}\left(\left(\int_{\mathbb{R}^{N}}|(-\Delta)^{\frac{s}{2}} \varphi|^{2}dx \right)^{2}+4 \int_{\mathbb{R}^{N}}|(-\Delta)^{\frac{s}{2}} \varphi|^{2}dx \int_{\mathbb{R}^{N}} (-\Delta)^{\frac{s}{2}} U_{\varepsilon, y} \cdot (-\Delta)^{\frac{s}{2}} \varphi dx \right)
\end{equation*}
and
\begin{equation*}
A_{2}(\varphi)=\frac{1}{p+1} \int_{\mathbb{R}^{N}}\left(\left(U_{\varepsilon, y}+\varphi\right)_{+}^{p+1}-U_{\varepsilon, y}^{p+1}-(p+1) U_{\varepsilon, y}^{p} \varphi-\frac{p(p+1)}{2} U_{\varepsilon, y}^{p-1} \varphi^{2}\right)dx.
\end{equation*}
Use $R_{\varepsilon}^{(i)}$ to denote the $i$ th derivative of $R_{\varepsilon}$, and also use similar notations for $A_{1}$ and $A_{2} .$ By direct computations, we deduce that, for any $\varphi, \psi \in H_{\varepsilon}$,
\begin{equation*}
\left\langle R_{\varepsilon}^{(1)}(\varphi), \psi\right\rangle=\left\langle A_{1}^{(1)}(\varphi), \psi\right\rangle-\left\langle A_{2}^{(1)}(\varphi), \psi\right\rangle
\end{equation*}
where
\begin{equation*}
\begin{aligned}
\left\langle A_{1}^{(1)}(\varphi), \psi\right\rangle=& b \varepsilon^{4s-N}\left(\int_{\mathbb{R}^{N}}|(-\Delta)^{\frac{s}{2}} \varphi|^{2}dx \int_{\mathbb{R}^{N}} (-\Delta)^{\frac{s}{2}} \varphi \cdot (-\Delta)^{\frac{s}{2}} \psi dx\right.\\
&\left.+\int_{\mathbb{R}^{N}}|(-\Delta)^{\frac{s}{2}} \varphi|^{2}dx \int_{\mathbb{R}^{N}} (-\Delta)^{\frac{s}{2}} U_{\varepsilon, y} \cdot (-\Delta)^{\frac{s}{2}} \psi dx\right) \\
&+2 b \varepsilon^{4s-N} \int_{\mathbb{R}^{N}} (-\Delta)^{\frac{s}{2}} U_{\varepsilon, y} \cdot (-\Delta)^{\frac{s}{2}} \varphi dx \int_{\mathbb{R}^{N}} (-\Delta)^{\frac{s}{2}} \varphi \cdot (-\Delta)^{\frac{s}{2}} \psi dx
\end{aligned}
\end{equation*}
and
\begin{equation*}
\left\langle A_{2}^{(1)}(\varphi), \psi\right\rangle=\int_{\mathbb{R}^{N}}\left(\left(U_{\varepsilon, y}+\varphi\right)_{+}^{p} \psi-U_{\varepsilon, y}^{p} \psi-p U_{\varepsilon, y}^{p-1} \varphi \psi\right)dx.
\end{equation*}
We also deduce, for any $\varphi, \psi, \xi \in H_{\varepsilon}$, that
\begin{equation*}
\left\langle R_{\varepsilon}^{(2)}(\varphi)[\psi], \xi\right\rangle=\left\langle A_{1}^{(2)}(\varphi)[\psi], \xi\right\rangle-\left\langle A_{2}^{(2)}(\varphi)[\psi], \xi\right\rangle
\end{equation*}
where
\begin{equation*}
\begin{aligned}
\left\langle A_{1}^{(2)}(\varphi)[\psi], \xi\right\rangle=& b \varepsilon^{4s-N}\left(2 \int_{\mathbb{R}^{N}} (-\Delta)^{\frac{s}{2}} \varphi \cdot (-\Delta)^{\frac{s}{2}} \psi dx \int_{\mathbb{R}^{N}} (-\Delta)^{\frac{s}{2}} \varphi \cdot (-\Delta)^{\frac{s}{2}} \xi dx\right.\\
&\left.+\int_{\mathbb{R}^{N}}|(-\Delta)^{\frac{s}{2}} \varphi|^{2} dx \int_{\mathbb{R}^{N}} (-\Delta)^{\frac{s}{2}} \xi \cdot (-\Delta)^{\frac{s}{2}} \psi dx\right) \\
&+2 b \varepsilon^{4s-N}\left(\int_{\mathbb{R}^{N}} (-\Delta)^{\frac{s}{2}} \varphi \cdot (-\Delta)^{\frac{s}{2}} \psi dx \int_{\mathbb{R}^{N}} (-\Delta)^{\frac{s}{2}} U_{\varepsilon, y} \cdot (-\Delta)^{\frac{s}{2}} \xi dx \right)\\
&+2 b \varepsilon^{4s-N}\left(\int_{\mathbb{R}^{N}} (-\Delta)^{\frac{s}{2}} U_{\varepsilon, y} \cdot (-\Delta)^{\frac{s}{2}} \psi dx \int_{\mathbb{R}^{N}} (-\Delta)^{\frac{s}{2}} \varphi \cdot (-\Delta)^{\frac{s}{2}} \xi dx\right) \\
&+2 b \varepsilon^{4s-N} \int_{\mathbb{R}^{N}} (-\Delta)^{\frac{s}{2}} U_{\varepsilon, y} \cdot (-\Delta)^{\frac{s}{2}} \varphi dx \int_{\mathbb{R}^{N}} (-\Delta)^{\frac{s}{2}} \xi \cdot (-\Delta)^{\frac{s}{2}} \psi dx
\end{aligned}
\end{equation*}
and
\begin{equation*}
\left\langle A_{2}^{(2)}(\varphi)[\psi], \xi\right\rangle=\int_{\mathbb{R}^{N}}\left(p\left(U_{\varepsilon, y}+\varphi\right)_{+}^{p-1} \psi \xi-p U_{\varepsilon, y}^{p-1} \psi \xi\right)dx.
\end{equation*}
\par
First, we estimate $A_{1}(\varphi), A_{1}^{1}(\varphi)$ and $A_{1}^{2}(\varphi)$. Notice that
\begin{equation*}
\left\|(-\Delta)^{\frac{s}{2}} U_{\varepsilon, y}\right\|_{L^{2}\left(\mathbb{R}^{N}\right)}=C_{0} \varepsilon^{\frac{N-2s}{2}}
\end{equation*}
with $C_{0}=\|(-\Delta)^{\frac{s}{2}} U\|_{L^{2}\left(\mathbb{R}^{N}\right)}$, and that
\begin{equation*}
\|(-\Delta)^{\frac{s}{2}} \varphi\|_{L^{2}\left(\mathbb{R}^{N}\right)} \leq C_{1} \varepsilon^{2s-N}\|\varphi\|_{\varepsilon}, \quad \varphi \in H_{\varepsilon}
\end{equation*}
holds for some $C_{1}>0$ independent of $\varepsilon$. Combining above two estimates together with H\"{o}lder's inequality yields
\begin{equation*}
\int_{\mathbb{R}^{N}}|(-\Delta)^{\frac{s}{2}} \varphi \cdot (-\Delta)^{\frac{s}{2}} \psi| dx \int_{\mathbb{R}^{N}}\left|(-\Delta)^{\frac{s}{2}} U_{\varepsilon, y} \cdot (-\Delta)^{\frac{s}{2}} \xi\right| dx\leq C \varepsilon^{-\frac{N+2s}{2}}
\end{equation*}
and that
\begin{equation*}
\int_{\mathbb{R}^{N}}|(-\Delta)^{\frac{s}{2}} \varphi \cdot (-\Delta)^{\frac{s}{2}} \psi|dx \int_{\mathbb{R}^{N}}|(-\Delta)^{\frac{s}{2}} \eta \cdot (-\Delta)^{\frac{s}{2}} \xi| dx\leq C \varepsilon^{-2N}
\end{equation*}
for all $\varphi, \psi, \eta, \xi \in H_{\varepsilon} .$ These estimates imply that
\begin{equation*}
\left|A_{1}^{(i)}(\varphi)\right| \leq C b \varepsilon^{-\frac{N}{2}}\left(1+\varepsilon^{-\frac{N}{2}}\|\varphi\|_{\varepsilon}\right)\|\varphi\|_{\varepsilon}^{N-i}
\end{equation*}
for some constant $C>0$ independent of $\varepsilon$.
\par
Next we estimate $A_{2}^{(i)}(\varphi)$ (the $i$ th derivative of $\left.A_{2}(\varphi)\right)$ for $i=0,1,2$. We consider the case $1<p \leq 2$ first.
\par
On one hand, if $1<p<2$, from \eqref{eq4.8}, we find
\begin{equation*}
\begin{aligned}
\left|A_{2}(\varphi)\right| & \leq C\left|\int_{\mathbb{R}^{N}} \varphi^{(p+1)}d x\right| \leq C \varepsilon^{\frac{1-p}{2} N}\|\varphi\|_{\varepsilon}^{(p+1)}, \\
\left|\left\langle A_{2}^{1}(\varphi), \psi\right\rangle\right| & \leq C\left|\int_{\mathbb{R}^{N}} \varphi^{p} \psi d x\right| \\
& \leq C\left(\int_{\mathbb{R}^{N}}|\varphi|^{p+1}d x\right)^{\frac{p}{p+1}}\left(\int_{\mathbb{R}^{N}}|\psi|^{p+1}d x\right)^{\frac{1}{p+1}} \\
& \leq C\left(\varepsilon^{\frac{1-p}{2} N}\|\varphi\|_{\varepsilon}^{p+1}\right)^{\frac{p}{p+1}}\left(\varepsilon^{\frac{1-p}{2} N}\|\psi\|_{\varepsilon}^{p+1}\right)^{\frac{1}{p+1}} \\
& \leq C \varepsilon^{\frac{1-p}{2} N}\|\varphi\|_{\varepsilon}^{p}\|\psi\|_{\varepsilon}
\end{aligned}
\end{equation*}
and
\begin{equation*}
\begin{aligned}
\left|\left\langle A_{2}^{2}(\varphi)(\psi, \xi)\right\rangle\right| & \leq C\left|\int_{\mathbb{R}^{N}} \varphi^{p-1} \psi \xi d x\right| \\
& \leq C\left(\int_{\mathbb{R}^{N}}|\varphi|^{(p-1) \frac{p+1}{p-1}}d x\right)^{\frac{p-1}{p+1}}\left(\int_{\mathbb{R}^{N}}|\psi|^{p+1}d x\right)^{\frac{1}{p+1}}\left(\int_{\mathbb{R}^{N}}|\xi|^{p+1}d x\right)^{\frac{1}{p+1}} \\
& \leq C\left(\varepsilon^{\frac{1-p}{2} N}\|\varphi\|_{\varepsilon}^{p+1}\right)^{\frac{p-1}{p+1}}\left(\varepsilon^{\frac{1-p}{2} N}\|\psi\|_{\varepsilon}^{p+1}\right)^{\frac{1}{p+1}}\left(\varepsilon^{\frac{1-p}{2} N}\|\xi\|_{\varepsilon}^{p+1}\right)^{\frac{1}{p+1}} \\
& \leq C \varepsilon^{\frac{1-p}{2} N}\|\varphi\|_{\varepsilon}^{p-1}\|\psi\|_{\varepsilon}\|\xi\|_{\varepsilon}
\end{aligned}
\end{equation*}
On the other hand, for the case $p>2$, using \eqref{eq4.8}, we also can obtain that
\begin{equation*}
\begin{aligned}
&\left|R_{\varepsilon}(\varphi)\right| \leq C \int_{\mathbb{R}^{N}} U_{\varepsilon, y}^{p-2} \varphi^{3} \leq C \varepsilon^{-\frac{N}{2}}\|\varphi\|_{\varepsilon}^{3}, \\
&\left|\left\langle R_{\varepsilon}^{\prime}(\varphi), \psi\right\rangle\right| \leq C \varepsilon^{-\frac{N}{2}}\|\varphi\|_{\varepsilon}^{2}\|\psi\|_{\varepsilon},
\end{aligned}
\end{equation*}
and
\begin{equation*}
\left|\left\langle R_{\varepsilon}^{\prime \prime}(\varphi)(\psi, \xi)\right\rangle\right| \leq C \varepsilon^{-\frac{N}{2} N}\|\varphi\|_{\varepsilon}\|\psi\|_{\varepsilon}\|\xi\|_{\varepsilon}.
\end{equation*}
So the results follow.
\end{proof}
\par
Now we will give the energy expansion for the approximate solutions.
\begin{Lem}\label{Lem4.7}
Assume that $V$ satisfies $(V 1)$ and $(V 2)$. Then, for $\varepsilon>0$ sufficiently small, there is a small constant $\tau>0$ and $C>0$ such that,
\begin{equation*}
\begin{aligned}
I_{\varepsilon}\left(U_{\varepsilon, y}\right)=& A \varepsilon^{N}+B \varepsilon^{N}\left(\left(V\left(y\right)-V\left(x_0\right)\right)\right)+O(\varepsilon^{N+\alpha})
\end{aligned}
\end{equation*}
where
\begin{equation*}
A=\frac{1}{2} \int_{\mathbb{R}^{N}}\left(a|(-\Delta)^{\frac{s}{2}} U|^{2}+U^{2}\right)dx+\frac{b}{4}\left(\int_{\mathbb{R}^{N}}|(-\Delta)^{\frac{s}{2}} U|^{2}dx\right)^{2}-\frac{1}{p+1} \int_{\mathbb{R}^{N}} U^{p+1}dx,
\end{equation*}
and
\begin{equation*}
B=\frac{1}{2} \int_{\mathbb{R}^{N}} U^{2}dx.
\end{equation*}
\end{Lem}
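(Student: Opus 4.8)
The plan is to substitute the approximate solution $U_{\varepsilon,y}$ into $I_{\varepsilon}$, rescale all the integrals by the change of variables $x=\varepsilon z+y$ so that the parameter $\varepsilon$ factors out, and then read off the leading constant $A\varepsilon^{N}$, the first-order potential correction $B\varepsilon^{N}(V(y)-V(x_0))$, and a H\"older-type remainder from the one term that still depends on both $\varepsilon$ and $y$, namely $\int_{\mathbb R^N}V(\varepsilon z+y)U^{2}\,dz$. It is worth noting that no use of equation \eqref{eq1.2} itself is required for this expansion: only the explicit form of $U_{\varepsilon,y}$ and the polynomial decay of $U$ from Theorem \ref{Thm1.1} enter.

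First I would record the scaling identities. Since $(-\Delta)^{\frac{s}{2}}U_{\varepsilon,y}(x)=\varepsilon^{-s}\bigl((-\Delta)^{\frac{s}{2}}U\bigr)\!\bigl(\tfrac{x-y}{\varepsilon}\bigr)$, the substitution $x=\varepsilon z+y$ gives
\[
\int_{\mathbb R^N}|(-\Delta)^{\frac{s}{2}}U_{\varepsilon,y}|^{2}\,dx=\varepsilon^{N-2s}\!\int_{\mathbb R^N}|(-\Delta)^{\frac{s}{2}}U|^{2}\,dz,\qquad \int_{\mathbb R^N}U_{\varepsilon,y}^{p+1}\,dx=\varepsilon^{N}\!\int_{\mathbb R^N}U^{p+1}\,dz,
\]
together with $\int_{\mathbb R^N}V(x)U_{\varepsilon,y}^{2}\,dx=\varepsilon^{N}\int_{\mathbb R^N}V(\varepsilon z+y)U^{2}\,dz$. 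The crucial point is that the weights in \eqref{eq1.1} are calibrated exactly to this scaling: in the local part of $\|\cdot\|_{\varepsilon}$ one has $\varepsilon^{2s}\cdot\varepsilon^{N-2s}=\varepsilon^{N}$, and in the Kirchhoff term $\varepsilon^{4s-N}\cdot(\varepsilon^{N-2s})^{2}=\varepsilon^{N}$, so that after substitution
\[
I_{\varepsilon}(U_{\varepsilon,y})=\varepsilon^{N}\Bigl[\tfrac{a}{2}\!\int_{\mathbb R^N}\!|(-\Delta)^{\frac{s}{2}}U|^{2}\,dz+\tfrac{b}{4}\Bigl(\int_{\mathbb R^N}\!|(-\Delta)^{\frac{s}{2}}U|^{2}\,dz\Bigr)^{2}-\tfrac{1}{p+1}\!\int_{\mathbb R^N}\!U^{p+1}\,dz\Bigr]+\tfrac{\varepsilon^{N}}{2}\!\int_{\mathbb R^N}\!V(\varepsilon z+y)U^{2}\,dz .
\]

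It then remains to expand the last integral. Writing $V(\varepsilon z+y)=V(x_0)+\bigl(V(y)-V(x_0)\bigr)+\bigl(V(\varepsilon z+y)-V(y)\bigr)$, the constant term $V(x_0)$ combines with the bracket above and with $\tfrac12\int U^{2}$ to reconstitute exactly $A$ (here using that $U$ solves \eqref{eq1.2} with $m=V(x_0)$; as in Section 3 one normalises $V(x_0)=1$), while the second term produces $B\varepsilon^{N}\bigl(V(y)-V(x_0)\bigr)$ with $B=\tfrac12\int_{\mathbb R^N}U^{2}\,dx$. For the third term I would split the $z$-integral at $|z|=r_{0}/(2\varepsilon)$: on $\{|z|\le r_{0}/(2\varepsilon)\}$ the point $\varepsilon z+y$ lies in $B_{r_{0}}(x_0)$ once $y$ is close enough to $x_0$, so $(V_2)$ yields $|V(\varepsilon z+y)-V(y)|\le C\varepsilon^{\alpha}|z|^{\alpha}$ and hence a contribution bounded by $C\varepsilon^{\alpha}\int_{\mathbb R^N}|z|^{\alpha}U^{2}\,dz$, which is finite because $U(z)\le C(1+|z|)^{-(N+2s)}$ and $\alpha<\tfrac{N+4s}{2}$; on the complement, boundedness of $V$ and the same decay give $\int_{|z|\ge r_{0}/(2\varepsilon)}U^{2}\,dz\le C\varepsilon^{N+4s}=o(\varepsilon^{\alpha})$. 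Multiplying by $\varepsilon^{N}/2$ yields the claimed remainder $O(\varepsilon^{N+\alpha})$.

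I do not anticipate a genuine obstacle here: the statement is a scaling computation followed by one H\"older estimate. The only places that require care are the exponent bookkeeping in the second paragraph — checking that the two distinct $\varepsilon$-weights attached respectively to the local fractional Laplacian and to the nonlocal Kirchhoff nonlinearity really do collapse to the single common factor $\varepsilon^{N}$ — and the choice of the cut-off radius $r_{0}/(2\varepsilon)$, which must be large enough for the decay tail of $U$ to beat $\varepsilon^{\alpha}$ yet small enough that $\varepsilon z+y$ stays inside the H\"older neighbourhood $B_{r_{0}}(x_0)$ uniformly for $y$ in a small fixed ball around $x_0$.
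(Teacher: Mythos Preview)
Your proposal is correct and follows essentially the same approach as the paper: rescale each piece of $I_\varepsilon(U_{\varepsilon,y})$ via $x=\varepsilon z+y$, split $V(\varepsilon z+y)-V(x_0)$ through the intermediate value $V(y)$, and bound the fluctuation $V(\varepsilon z+y)-V(y)$ using the H\"older assumption $(V_2)$ together with the decay of $U$. Your treatment is in fact slightly more careful than the paper's, since you split the $z$-integral at $|z|=r_0/(2\varepsilon)$ so as to remain inside the H\"older neighbourhood $B_{r_0}(x_0)$, whereas the paper applies the H\"older bound $|V(\varepsilon z+y)-V(y)|\le C|\varepsilon z|^{\alpha}$ directly over all of $\mathbb{R}^N$.
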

\begin{proof}
By direct computation, we can know
\begin{equation}\label{eq4.14}
\begin{aligned}
I_{\varepsilon}\left(U_{\varepsilon, y}\right)=&\frac{1}{2} \int_{\mathbb{R}^{N}}\left(\varepsilon^{2} a\left|(-\Delta)^{\frac{s}{2}} U_{\varepsilon, y}\right|^{2}+V(x) U_{\varepsilon, y}^{2}\right)dx+\frac{\varepsilon^{4s-N} b}{4}\left(\int_{\mathbb{R}^{N}}\left|(-\Delta)^{\frac{s}{2}} U_{\varepsilon, y}\right|^{2}dx\right)^{2}\\
&\quad-\frac{1}{p+1} \int_{\mathbb{R}^{N}} U_{\varepsilon, y}^{p+1}dx \\
=&A\varepsilon^N+ \frac{1}{2} \int_{\mathbb{R}^{N}}\left(V(x)-V\left(x_0\right)\right) U_{\varepsilon, y}^{2}dx.
\end{aligned}
\end{equation}
Now, we discuss the last term in the right hand of \eqref{eq4.14}. Firstly, by direct calculation, we have
\begin{equation}\label{eq4.15}
\begin{aligned}
\int_{\mathbb{R}^{N}}\left(V(x)-V(x_0)\right) U_{\varepsilon, y}^{2} d x
&= \int_{\mathbb{R}^{N}}\left(V(x)-V\left(y\right)+V\left(y\right)-V(x_0)\right) U_{\varepsilon, y}^{2} d x \\
&=\varepsilon^{N} \int_{\mathbb{R}^{N}} U^{2}\left(V(y)-V\left(x_0\right)\right)+\int_{\mathbb{R}^{N}} \left(V(x)-V\left(y\right)\right) U_{\varepsilon, y}^{2} d x. \\
\end{aligned}
\end{equation}
Secondly, we have
\begin{equation}\label{eq4.16}
\begin{aligned}
\int_{\mathbb{R}^{N}}\left(V(x)-V\left(y\right)\right) U_{\varepsilon, y}^{2} d x &=\varepsilon^{N} \int_{\mathbb{R}^{N}}\left(V\left(\varepsilon z+y\right)-V\left(y\right)\right) U^{2} dz \\
& \leq \varepsilon^{N} \int_{\mathbb{R}^{N}} \frac{|\varepsilon z|^{\alpha}}{1+|z|^{2(N+2 s)}}d z \\
& \leq \varepsilon^{N+\alpha}\left(C+\int_{\mathbb{R}^{N} \backslash B_{R}(0)} \frac{1}{|z|^{(2 N+4 s-\alpha)}}dz\right) \\
& \leq C \varepsilon^{N+\alpha}.
\end{aligned}
\end{equation}
So, combining \eqref{eq4.15}-\eqref{eq4.14}, we obtain
\begin{equation}\label{eq4.19}
\int_{\mathbb{R}^{N}}\left(V(x)-V(x_0)\right) U_{\varepsilon, y}^{2} d x=\varepsilon^{N} \left(V(y)-V\left(x_0\right)\right)+O\left(\varepsilon^{N+\alpha}\right).
\end{equation}
Combining above estimates gives the desired estimates. The proof is complete.
\end{proof}

\section{Semiclassical solutions for the fractional Kirchhoff equation}

\subsection{Finite dimensional reduction}
In this subsection we complete {\bf Step 1} for the Lyapunov-Schmidt reduction method as in Section $4 .$ We first consider the operator $\mathcal{L}_{\varepsilon}$,
\begin{equation*}
\begin{aligned}
\left\langle\mathcal{L}_{\varepsilon} \varphi, \psi\right\rangle&=\langle\varphi, \psi\rangle_{\varepsilon}+\varepsilon^{4s-N} b\int_{\mathbb{R}^{N}}\left| (-\Delta)^{\frac{s}{2}} U_{\varepsilon, y}\right|^{2}dx \int_{\mathbb{R}^{N}} (-\Delta)^{\frac{s}{2}} \varphi \cdot (-\Delta)^{\frac{s}{2}} \psi dx\\
&\quad+2 \varepsilon^{4s-N} b\left(\int_{\mathbb{R}^{N}} (-\Delta)^{\frac{s}{2}} U_{\varepsilon, y} \cdot (-\Delta)^{\frac{s}{2}} \varphi dx \right)\left(\int_{\mathbb{R}^{N}} (-\Delta)^{\frac{s}{2}} U_{\varepsilon, y} \cdot (-\Delta)^{\frac{s}{2}} \psi dx\right)-p \int_{\mathbb{R}^{N}} U_{\varepsilon, y}^{p-1} \varphi \psi dx
\end{aligned}
\end{equation*}
for $\varphi, \psi \in H_{\varepsilon} .$ The following result shows that $\mathcal{L}_{\varepsilon}$ is invertible when restricted on $E_{\varepsilon, y}$

\begin{Lem}\label{Lem5.1}
There exist $\varepsilon_{1}>0, \delta_{1}>0$ and $\rho>0$ sufficiently small, such that for every $\varepsilon \in\left(0, \varepsilon_{1}\right), \delta \in\left(0, \delta_{1}\right)$, there holds
\begin{equation*}
\left\|\mathcal{L}_{\varepsilon} \varphi\right\|_{\varepsilon} \geq \rho\|\varphi\|_{\varepsilon}, \quad \forall \varphi \in E_{\varepsilon, y}
\end{equation*}
uniformly with respect to $y \in B_{\delta}(x_0)$.
\end{Lem}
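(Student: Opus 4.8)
\noindent The plan is to argue by contradiction and rescale to the fixed-domain problem, reducing the estimate to the nondegeneracy of $L_+$ established in Theorem \ref{Thm1.2}. Suppose the assertion fails; then there are $\varepsilon_n\to0$, $\delta_n\to0$, points $y_n\in B_{\delta_n}(x_0)$ (hence $y_n\to x_0$) and $\varphi_n\in E_{\varepsilon_n,y_n}$ with $\|\varphi_n\|_{\varepsilon_n}=1$ and $\|\mathcal{L}_{\varepsilon_n}\varphi_n\|_{\varepsilon_n}\to0$. I would set $w_n(z):=\varepsilon_n^{N/2}\varphi_n(\varepsilon_n z+y_n)$ and, using the scaling identities $\|(-\Delta)^{\frac{s}{2}}U_{\varepsilon,y}\|_2^2=\varepsilon^{N-2s}\|(-\Delta)^{\frac{s}{2}}U\|_2^2$ and $\int_{\mathbb{R}^{N}}(-\Delta)^{\frac{s}{2}}U_{\varepsilon,y}(-\Delta)^{\frac{s}{2}}\varphi\,dx=\varepsilon^{N-2s}\int_{\mathbb{R}^{N}}(-\Delta)^{\frac{s}{2}}U(-\Delta)^{\frac{s}{2}}\tilde\varphi\,dz$ (with $\tilde\varphi(z)=\varphi(\varepsilon z+y)$), check that $\langle\varphi_n,\psi\rangle_{\varepsilon_n}$ and $\langle\mathcal{L}_{\varepsilon_n}\varphi_n,\psi\rangle$ become expressions in $w_n$ and $v_n(z):=\varepsilon_n^{N/2}\psi(\varepsilon_n z+y_n)$ from which every power of $\varepsilon_n$ disappears; in particular $a\|(-\Delta)^{\frac{s}{2}}w_n\|_2^2+\int_{\mathbb{R}^{N}}V(\varepsilon_n z+y_n)w_n^2\,dz=1$, so by $(V_1)$ the sequence $(w_n)$ is bounded in $H^s(\mathbb{R}^{N})$. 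Up to a subsequence, $w_n\rightharpoonup w$ in $H^s(\mathbb{R}^{N})$, $w_n\to w$ in $L^q_{\mathrm{loc}}(\mathbb{R}^{N})$ for $q<2^*_s$, and $\int_{\mathbb{R}^{N}}(-\Delta)^{\frac{s}{2}}U(-\Delta)^{\frac{s}{2}}w_n\,dz\to\int_{\mathbb{R}^{N}}(-\Delta)^{\frac{s}{2}}U(-\Delta)^{\frac{s}{2}}w\,dz$, since $(-\Delta)^{\frac{s}{2}}U\in L^2(\mathbb{R}^{N})$.

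\noindent Next I would identify $w$. For a fixed $\phi\in C_c^\infty(\mathbb{R}^{N})$, take $\psi_n(x):=\varepsilon_n^{-N/2}\phi((x-y_n)/\varepsilon_n)$, so that $v_n=\phi$ and $\|\psi_n\|_{\varepsilon_n}$ stays bounded; then $\langle\mathcal{L}_{\varepsilon_n}\varphi_n,\psi_n\rangle=o(1)$. Passing to the limit in the rescaled identity — using $V(\varepsilon_n z+y_n)\to V(x_0)$ locally uniformly together with dominated convergence, the weak $H^s$-convergence in the leading term and in the two Kirchhoff integrals, and the strong $L^2_{\mathrm{loc}}$-convergence against the compactly supported weight $U^{p-1}\phi$ — I obtain that $w$ solves weakly $L_+w=0$, where, with $c:=a+b\|(-\Delta)^{\frac{s}{2}}U\|_2^2$ and $m:=V(x_0)>0$, $L_+$ is precisely the operator of Theorem \ref{Thm1.2} attached to the ground state $U$ of \eqref{eq1.2}. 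Hence, by the nondegeneracy in Theorem \ref{Thm1.2}, $w=\sum_{i=1}^N c_i\,\partial_{x_i}U$. On the other hand, rescaling the constraints $\langle\partial_{y^i}U_{\varepsilon_n,y_n},\varphi_n\rangle_{\varepsilon_n}=0$, dividing by $\varepsilon_n^{\frac{N}{2}-1}$ and letting $n\to\infty$, gives $a\int_{\mathbb{R}^{N}}(-\Delta)^{\frac{s}{2}}\partial_{x_i}U\,(-\Delta)^{\frac{s}{2}}w\,dz+m\int_{\mathbb{R}^{N}}\partial_{x_i}U\,w\,dz=0$ for $i=1,\dots,N$. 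Because $U$ is radial, the Gram matrix $G_{ij}:=a\int_{\mathbb{R}^{N}}(-\Delta)^{\frac{s}{2}}\partial_{x_i}U\,(-\Delta)^{\frac{s}{2}}\partial_{x_j}U\,dz+m\int_{\mathbb{R}^{N}}\partial_{x_i}U\,\partial_{x_j}U\,dz$ is diagonal with strictly positive diagonal entries (the off-diagonal integrals vanish by parity, since $(-\Delta)^sU$ is radial), so $c_1=\dots=c_N=0$ and $w=0$.

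\noindent Finally I would reach the contradiction by testing the quadratic form against $\varphi_n$ itself:
\begin{equation*}
\langle\mathcal{L}_{\varepsilon_n}\varphi_n,\varphi_n\rangle=\|\varphi_n\|_{\varepsilon_n}^2+\varepsilon_n^{4s-N}b\,\|(-\Delta)^{\frac{s}{2}}U_{\varepsilon_n,y_n}\|_2^2\,\|(-\Delta)^{\frac{s}{2}}\varphi_n\|_2^2+2\varepsilon_n^{4s-N}b\Big(\int_{\mathbb{R}^{N}}(-\Delta)^{\frac{s}{2}}U_{\varepsilon_n,y_n}(-\Delta)^{\frac{s}{2}}\varphi_n\,dx\Big)^2-p\int_{\mathbb{R}^{N}}U_{\varepsilon_n,y_n}^{p-1}\varphi_n^2\,dx.
\end{equation*}
The first term equals $1$, the next two are nonnegative, and $p\int_{\mathbb{R}^{N}}U_{\varepsilon_n,y_n}^{p-1}\varphi_n^2\,dx=p\int_{\mathbb{R}^{N}}U^{p-1}w_n^2\,dz\to0$: on a ball $B_R(0)$ this follows from $w_n\to w=0$ in $L^2_{\mathrm{loc}}$, and on $\mathbb{R}^{N}\setminus B_R(0)$ from the boundedness of $(w_n)$ in $L^{2^*_s}$, H\"older's inequality and the decay $U^{p-1}(z)\le C(1+|z|)^{-(p-1)(N+2s)}$ of Theorem \ref{Thm1.1}, followed by $R\to\infty$. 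Hence $\langle\mathcal{L}_{\varepsilon_n}\varphi_n,\varphi_n\rangle\ge 1-o(1)$, contradicting $|\langle\mathcal{L}_{\varepsilon_n}\varphi_n,\varphi_n\rangle|\le\|\mathcal{L}_{\varepsilon_n}\varphi_n\|_{\varepsilon_n}\|\varphi_n\|_{\varepsilon_n}=o(1)$. This yields the existence of $\varepsilon_1,\delta_1,\rho$ with the stated property.

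\noindent The step I expect to be the main obstacle is the limit passage above: one must track precisely the powers of $\varepsilon$ generated by the \emph{two} distinct nonlocal structures so that the rescaled bilinear forms converge to exactly $L_+$, and one must control the behaviour at infinity, where $U^{p-1}$ and the $\partial_{x_i}U$ decay only polynomially — this is where the decay estimates of Theorem \ref{Thm1.1} and the scale-invariant inequality of Lemma \ref{Lem4.3} enter. The remaining estimates are routine.
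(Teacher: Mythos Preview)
Your argument follows the same contradiction--rescaling--nondegeneracy route as the paper's proof, and the three main stages (identification of the weak limit via $L_+$, elimination of the kernel directions through the orthogonality constraints, and the final quadratic-form contradiction using the decay of $U$) match the paper essentially step for step.

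There is, however, one point that needs care. The operator $\mathcal{L}_\varepsilon$ in the statement is meant as the restriction $\mathcal{L}_\varepsilon:E_{\varepsilon,y}\to E_{\varepsilon,y}$ (the paper says so immediately after the lemma, and this is what is used in Lemma~\ref{Lem5.2}). Consequently the contradiction hypothesis only yields
\[
\langle\mathcal{L}_{\varepsilon_n}\varphi_n,\psi\rangle=o(\|\psi\|_{\varepsilon_n})\qquad\text{for }\psi\in E_{\varepsilon_n,y_n},
\]
not for arbitrary $\psi$. Your test functions $\psi_n(x)=\varepsilon_n^{-N/2}\phi((x-y_n)/\varepsilon_n)$ do not in general belong to $E_{\varepsilon_n,y_n}$, so the line ``then $\langle\mathcal{L}_{\varepsilon_n}\varphi_n,\psi_n\rangle=o(1)$'' is not justified as written. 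The paper closes this gap by replacing the (rescaled) test function $g$ by its projection $\tilde g_n=g-\sum_{l}a_n^l\,\partial_{x_l}U$ onto the constraint space; after passing to the limit one obtains
\[
\langle L_+ w,g\rangle-\sum_{l=1}^{N}a^l\langle L_+ w,\partial_{x_l}U\rangle=0,
\]
and the correction terms vanish because $\partial_{x_l}U\in\ker L_+$ (Theorem~\ref{Thm1.2}), leaving $\langle L_+w,g\rangle=0$ as you wanted. Inserting this projection step fixes your proof; everything else --- in particular the final step, where $\varphi_n$ itself lies in $E_{\varepsilon_n,y_n}$ --- is fine.
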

\begin{proof}We use a contradiction argument. Suppose that there exist $\varepsilon_{n}, \delta_{n} \rightarrow 0, y_{n} \in B_{\delta}(x_0)$ and $\varphi_{n} \in E_{\varepsilon_{n}, y_{n}}$ satisfying
\begin{equation}\label{eq5.1}
\left\langle\mathcal{L}_{\varepsilon_{n}} \varphi_{n}, g\right\rangle \leq \frac{1}{n}\left\|\varphi_{n}\right\|_{\varepsilon_{n}}\|g\|_{\varepsilon_{n}}, \quad \forall g \in E_{\varepsilon_{n}, y_{n}}.
\end{equation}
Since this inequality is homogeneous with respect to $\varphi_{n}$, we can assume that
\begin{equation*}
\left\|\varphi_{n}\right\|_{\varepsilon_{n}}^{2}=\varepsilon_{n}^{N} \quad \text { for all } n.
\end{equation*}
Denote $\tilde{\varphi}_{n}(x)=\varphi_{n}\left(\varepsilon_{n} x+y_{n}\right) .$ Then
\begin{equation*}
\int_{\mathbb{R}^{N}}\left(a\left|(-\Delta)^{\frac{s}{2}} \tilde{\varphi}_{n}\right|^{2}+V\left(\varepsilon_{n} x+y_{n}\right) \tilde{\varphi}_{n}^{2}\right)dx=1.
\end{equation*}
As $V$ is bounded and $\inf\limits_{\mathbb{R}^{N}} V>0$, we infer that $\left\{\tilde{\varphi}_{n}\right\}$ is a bounded sequence in $H_{\varepsilon}$. Hence, up to a subsequence, we may assume that
\begin{equation*}
\begin{aligned}
\tilde{\varphi}_{n} \rightarrow \varphi & \text { in } H_{\varepsilon}, \\
\tilde{\varphi}_{n} \rightarrow \varphi & \text { in } L_{\text {loc }}^{p+1}\left(\mathbb{R}^{N}\right), \\
\tilde{\varphi}_{n} \rightarrow \varphi & \text { a.e. in } \mathbb{R}^{N},
\end{aligned}
\end{equation*}
for some $\varphi \in H_{\varepsilon}$. We will prove that $\varphi \equiv 0$.
\par
First we prove that $\varphi=\sum\limits_{l=1}^{N} c^{l} \partial_{x_{l}} U$ for some $c^{l} \in \mathbb{R}$. To this end, let $\tilde{E}_{n}=\left\{\tilde{g} \in H_{\varepsilon}: \tilde{g}_{\varepsilon_{n}, y_{n}} \in\right.$ $\left.E_{\varepsilon_{n}, y_{n}}\right\}$, that is,
\begin{equation*}
\tilde{E}_{n}=\left\{\tilde{g} \in H_{\varepsilon}: \int_{\mathbb{R}^{N}}\left(a (-\Delta)^{\frac{s}{2}} \partial_{x_{i}} U \cdot (-\Delta)^{\frac{s}{2}} \tilde{g}+V\left(\varepsilon_{n} x+y_{n}\right) \partial_{x_{i}} U \tilde{g}\right)dx=0 \text { for } i=1,2,\cdots,N\right\}.
\end{equation*}
For convenience, denote at the moment
\begin{equation*}
\langle u, v\rangle_{*, n}=\int_{\mathbb{R}^{N}}\left(a (-\Delta)^{\frac{s}{2}} u \cdot (-\Delta)^{\frac{s}{2}} v+V\left(\varepsilon_{n} x+y_{n}\right) u v\right)dx \quad \text { and } \quad\|u\|_{*, n}^{2}=\langle u, u\rangle_{*, n}
\end{equation*}
Then \eqref{eq5.1} can be rewritten in terms of $\tilde{\varphi}_{n}$ as follows:
\begin{equation}\label{eq5.2}
\begin{aligned}
&\left\langle\tilde{\varphi}_{n}, \tilde{g}\right\rangle_{*, n}+b \int_{\mathbb{R}^{N}}|(-\Delta)^{\frac{s}{2}} U|^{2} dx \int_{\mathbb{R}^{N}} (-\Delta)^{\frac{s}{2}} \tilde{\varphi}_{n} \cdot (-\Delta)^{\frac{s}{2}} \tilde{g}dx \\
+& 2 b \int_{\mathbb{R}^{N}} (-\Delta)^{\frac{s}{2}} U \cdot (-\Delta)^{\frac{s}{2}} \tilde{\varphi}_{n}dx \int_{\mathbb{R}^{N}} (-\Delta)^{\frac{s}{2}} U \cdot (-\Delta)^{\frac{s}{2}} \tilde{g}dx-p \int_{\mathbb{R}^{N}} U^{p-1} \tilde{\varphi}_{n} \tilde{g}dx \\
\leq & n^{-1}\left\|\tilde{g}_{n}\right\|_{*, n}
\end{aligned}
\end{equation}
where $\tilde{g}_{n}(x)=g\left(\varepsilon_{n} x+y_{n}\right) \in \tilde{E}_{n}$.
\par
Now, for any $g \in C_{0}^{\infty}\left(\mathbb{R}^{N}\right)$, define $a_{n}^{l} \in \mathbb{R}(1 \leq l \leq N)$ by
\begin{equation*}
a_{n}^{l}=\frac{\left\langle\partial_{x I} U, g\right\rangle_{*, n}}{\left\|\partial_{x l} U\right\|_{*, n}^{2}}
\end{equation*}
and let $\tilde{g}_{n}=g-\sum\limits_{l=1}^{N} a_{n}^{l} \partial_{x_{l}} U$. Note that
\begin{equation*}
\left\|\partial_{x_{l}} U\right\|_{*, n}^{2} \rightarrow \int_{\mathbb{R}^{N}}\left(a\left|(-\Delta)^{\frac{s}{2}} \partial_{x_{l}} U\right|^{2}+\left(\partial_{x_{l}} U\right)^{2}\right)dx>0
\end{equation*}
and for $l \neq j$
\begin{equation*}
\left\langle\partial_{x_{l}} U, \partial_{x_{j}} U\right\rangle_{*, n}=\int_{\mathbb{R}^{N}} V\left(\varepsilon_{n} x+y_{n}\right) \partial_{x_{l}} U \partial_{x_{j}} Udx \rightarrow \int_{\mathbb{R}^{N}} \partial_{x_{l}} U \partial_{x_{j}} Udx=0
\end{equation*}
Hence the dominated convergence theorem implies that
\begin{equation*}
a_{n}^{l} \rightarrow a^{l}=\frac{\int_{\mathbb{R}^{N}}\left(a (-\Delta)^{\frac{s}{2}} \partial_{x_{l}} U \cdot (-\Delta)^{\frac{s}{2}} g+\partial_{x_{l}} U g\right)dx}{ \int_{\mathbb{R}^{N}}\left(a\left|(-\Delta)^{\frac{s}{2}} \partial_{x_{l}} U\right|^{2}+\left(\partial_{x_{l}} U\right)^{2}\right)dx}
\end{equation*}
and
\begin{equation*}
\left\langle\partial_{x_{l}} U, \tilde{g}_{n}\right\rangle_{*, n} \rightarrow 0
\end{equation*}
as $n \rightarrow \infty$. Moreover, we infer that
\begin{equation*}
\left\|\tilde{g}_{n}\right\|_{*, n}=O(1)
\end{equation*}
Now substituting $\tilde{g}_{n}$ into \eqref{eq5.2} and letting $n \rightarrow \infty$, we find that
\begin{equation*}
\langle\mathcal{L}_+ \varphi, g\rangle-\sum_{l=1}^{N} a^{I}\left\langle\mathcal{L}_+ \varphi, \partial_{x_{l}} U\right\rangle=0
\end{equation*}
where $\mathcal{L}_+$ is defined as in Theorem \ref{Thm1.2}. Since $U_{x l} \in \operatorname{Ker} \mathcal{L}_+$ by Theorem \ref{Thm1.2}, we have $\left\langle\mathcal{L}_+ \varphi, \partial_{x_{i}} U\right\rangle=0$. Thus
\begin{equation*}
\langle\mathcal{L}_+ \varphi, g\rangle=0, \quad \forall g \in C_{0}^{\infty}\left(\mathbb{R}^{n}\right).
\end{equation*}
This implies that $\varphi \in \operatorname{Ker} \mathcal{L}_+$. Applying Theorem \ref{Thm1.2} again gives $c^{l} \in \mathbb{R}$ ( $1 \leq l \leq N$ ) such that
\begin{equation*}
\varphi=\sum_{l=1}^{N} c^{l} \partial_{x_{l}} U.
\end{equation*}
Next we prove $\varphi \equiv 0$. Note that $\tilde{\varphi}_{n} \in \tilde{E}_{n}$, that is,
\begin{equation*}
\int_{\mathbb{R}^{N}}\left(a (-\Delta)^{\frac{s}{2}} \tilde{\varphi}_{n} \cdot (-\Delta)^{\frac{s}{2}} \partial_{x l} U+V\left(\varepsilon_{n} x+y_{n}\right) \tilde{\varphi}_{n} \partial_{x l} U\right)dx=0
\end{equation*}
for each $l=1,2\cdots,N$. By sending $n \rightarrow \infty$, we derive
\begin{equation*}
c^{l} \int_{\mathbb{R}^{N}}\left(a\left|(-\Delta)^{\frac{s}{2}} \partial_{x_{l}} U\right|^{2}+\left(\partial_{x_{l}} U\right)\right)dx=0
\end{equation*}
which implies $c^{l}=0$. Hence
\begin{equation*}
\varphi \equiv 0 \quad \text { in } \mathbb{R}^{N}.
\end{equation*}
Now we can complete the proof. We have proved that $\tilde{\varphi}_{n} \rightarrow 0$ in $H_{\varepsilon}$ and $\tilde{\varphi}_{n} \rightarrow 0$ in $L_{\text {loc }}^{p+1}\left(\mathbb{R}^{N}\right)$. As a result we obtain
\begin{equation*}
\begin{aligned}
p \int_{\mathbb{R}^{N}} U_{\varepsilon_{n}, y_{n}}^{p-1} \varphi_{n}^{2}dx &=p \varepsilon_{n}^{N} \int_{\mathbb{R}^{N}} U^{p-1} \tilde{\varphi}_{n}^{2}dx \\
&=p \varepsilon_{n}^{N}\left(\int_{B_{R}(0)} U^{p-1} \tilde{\varphi}_{n}^{2}dx+\int_{\mathbb{R}^{N} \backslash B_{R}(0)} U^{p-1} \tilde{\varphi}_{n}^{2}dx\right) \\
&=p \varepsilon_{n}^{N}\left(o(1)+o_{R}(1)\right)
\end{aligned}
\end{equation*}
where $o(1) \rightarrow 0$ as $n \rightarrow \infty$ since $\tilde{\varphi}_{n} \rightarrow 0$ in $L_{\text {loc }}^{p+1}\left(\mathbb{R}^{N}\right)$, and $o_{R}(1) \rightarrow 0$ as $R \rightarrow \infty$ since $\tilde{\varphi}_{n} \in$ $H_{\varepsilon}$ is uniformly bounded. Take $R$ sufficiently large. We get
\begin{equation*}
p \int_{\mathbb{R}^{N}} U_{\varepsilon_{n}, y_{n}}^{p-1} \varphi_{n}^{2}dx \leq \frac{1}{2} \varepsilon_{n}^{N}
\end{equation*}
for $n$ sufficiently large. However, this implies that
\begin{equation*}
\begin{aligned}
\frac{1}{n} \varepsilon_{n}^{N}=\frac{1}{n}\left\|\varphi_{n}\right\|_{\varepsilon_{n}}^{2} \geq &\left\langle\mathcal{L}_{\varepsilon_{n}} \varphi_{n}, \varphi_{n}\right\rangle \\
=&\left\|\varphi_{n}\right\|_{\varepsilon_{n}}^{2}+b \varepsilon_{n} \int_{\mathbb{R}^{N}}\left|(-\Delta)^{\frac{s}{2}} U_{\varepsilon_{n}, y_{n}}\right|^{2} \int_{\mathbb{R}^{N}}\left|(-\Delta)^{\frac{s}{2}} \varphi_{n}\right|^{2}dx \\
&+2 b \varepsilon_{n}\left(\int_{\mathbb{R}^{N}} (-\Delta)^{\frac{s}{2}} U_{\varepsilon_{n}, y_{n}} \cdot (-\Delta)^{\frac{s}{2}} \varphi_{n}dx\right)^{2}-p \int_{\mathbb{R}^{N}} U_{\varepsilon_{n}, y_{n}}^{p-1} \varphi_{n}^{2}dx \\
\geq & \frac{1}{2} \varepsilon_{n}^{N}.
\end{aligned}
\end{equation*}
We reach a contradiction. The proof is complete.
\end{proof}
\par
Lemma \ref{Lem5.1} implies that by restricting on $E_{\varepsilon, y}$, the quadratic form $\mathcal{L}_{\varepsilon}: E_{\varepsilon, y} \rightarrow E_{\varepsilon, y}$ has a bounded inverse, with $\left\|\mathcal{L}_{\varepsilon}^{-1}\right\| \leq \rho^{-1}$ uniformly with respect to $y \in B_{\delta}(x_0)$. This further implies the following reduction map.
\begin{Lem}\label{Lem5.2}
There exist $\varepsilon_{0}>0, \delta_{0}>0$ sufficiently small such that for all $\varepsilon \in\left(0, \varepsilon_{0}\right), \delta \in$ $\left(0, \delta_{0}\right)$, there exists a $C^{1}$ map $\varphi_{\varepsilon}: B_{\delta}(x_0) \rightarrow H_{\varepsilon}$ with $y \mapsto \varphi_{\varepsilon, y} \in E_{\varepsilon, y}$ satisfying
\begin{equation*}
\left\langle\frac{\partial J_{\varepsilon}\left(y, \varphi_{\varepsilon, y}\right)}{\partial \varphi}, \psi\right\rangle_{\varepsilon}=0, \quad \forall \psi \in E_{\varepsilon, y}.
\end{equation*}
Moreover, there exists a constant $C>0$ independent of $\varepsilon$ small enough and $\kappa\in(0,\frac{\alpha}{2})$ such that
\begin{equation*}
\|\varphi_{\varepsilon, y}\|_{\varepsilon} \leq C \varepsilon^{\frac{N}{2}+\alpha-\kappa}+C \varepsilon^{\frac{N}{2}} \left(V\left(y\right)-V\left(x_{0}\right)\right)^{1-\kappa}.
\end{equation*}
\end{Lem}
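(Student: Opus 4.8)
The plan is to produce $\varphi_{\varepsilon,y}$ by solving the auxiliary equation $\partial_\varphi J_\varepsilon(y,\varphi)=0$ on the constraint space $E_{\varepsilon,y}$ through a contraction mapping argument built on the uniform invertibility of $\mathcal L_\varepsilon$ established in Lemma \ref{Lem5.1}, and then to read off the $C^1$ dependence on $y$ from the uniform contraction principle together with the elementary estimates of Section 4.

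\textbf{Step 1: fixed-point reformulation.} Fix $\varepsilon,\delta$ small and $y\in B_\delta(x_0)$. Starting from
\[
J_\varepsilon(y,\varphi)=J_\varepsilon(y,0)+l_\varepsilon(\varphi)+\frac{1}{2}\langle\mathcal L_\varepsilon\varphi,\varphi\rangle+R_\varepsilon(\varphi),
\]
with $l_\varepsilon,R_\varepsilon$ as in \eqref{eq4.6} and \eqref{eq4.7}, I identify linear functionals on $H_\varepsilon$ with elements of $H_\varepsilon$ via the Riesz isomorphism of $(H_\varepsilon,\langle\cdot,\cdot\rangle_\varepsilon)$ and denote by $P_{\varepsilon,y}$ the $\langle\cdot,\cdot\rangle_\varepsilon$-orthogonal projection onto $E_{\varepsilon,y}$. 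For $\varphi\in E_{\varepsilon,y}$, the requirement $\langle\partial_\varphi J_\varepsilon(y,\varphi),\psi\rangle_\varepsilon=0$ for every $\psi\in E_{\varepsilon,y}$ is then equivalent to $\mathcal L_\varepsilon\varphi+P_{\varepsilon,y}\bigl(l_\varepsilon+R_\varepsilon'(\varphi)\bigr)=0$ in $E_{\varepsilon,y}$, where I abbreviate $P_{\varepsilon,y}\mathcal L_\varepsilon|_{E_{\varepsilon,y}}$ by $\mathcal L_\varepsilon$. By Lemma \ref{Lem5.1} this operator is invertible on $E_{\varepsilon,y}$ with $\|\mathcal L_\varepsilon^{-1}\|\le\rho^{-1}$ uniformly in $y\in B_\delta(x_0)$, so the problem amounts to the fixed-point equation
\[
\varphi=\Phi_{\varepsilon,y}(\varphi):=-\mathcal L_\varepsilon^{-1}P_{\varepsilon,y}\bigl(l_\varepsilon+R_\varepsilon'(\varphi)\bigr).
\]

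\textbf{Step 2: contraction and the a priori bound.} Fix any $\kappa\in(0,\alpha/2)$ and a large constant $C_*>0$, put
\[
r_{\varepsilon,y}:=C_*\Bigl(\varepsilon^{\frac{N}{2}+\alpha-\kappa}+\varepsilon^{\frac{N}{2}}\bigl(V(y)-V(x_0)\bigr)^{1-\kappa}\Bigr),
\]
and work in $B_{\varepsilon,y}:=\{\varphi\in E_{\varepsilon,y}:\|\varphi\|_\varepsilon\le r_{\varepsilon,y}\}$. By Lemma \ref{Lem4.4}, $\|l_\varepsilon\|_\varepsilon\le C\varepsilon^{N/2}\bigl(\varepsilon^\alpha+|V(y)-V(x_0)|\bigr)$; since $\varepsilon<1$ gives $\varepsilon^\alpha\le\varepsilon^{\alpha-\kappa}$ and, for $\delta$ small, $(V_2)$ and continuity give $0\le V(y)-V(x_0)\le1$, whence $V(y)-V(x_0)\le(V(y)-V(x_0))^{1-\kappa}$, one gets $\rho^{-1}\|l_\varepsilon\|_\varepsilon\le\frac{1}{2}r_{\varepsilon,y}$ once $C_*$ is large enough. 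Next, because $r_{\varepsilon,y}=\varepsilon^{N/2}o(1)$ as $\varepsilon,\delta\to0$ (uniformly in $y$, since $\alpha-\kappa>0$ and $V(y)-V(x_0)\to0$), Lemma \ref{Lem4.5} with $i=1,2$ shows that on $B_{\varepsilon,y}$ one has $\rho^{-1}\|R_\varepsilon'(\varphi)\|_\varepsilon=o(1)\,r_{\varepsilon,y}$ and that $\varphi\mapsto\mathcal L_\varepsilon^{-1}P_{\varepsilon,y}R_\varepsilon'(\varphi)$ is Lipschitz on $B_{\varepsilon,y}$ with constant $o(1)$: each negative power of $\varepsilon$ present in $R_\varepsilon$ — namely $\varepsilon^{-N(p-1)/2}$, $\varepsilon^{-N/2}$ and $\varepsilon^{-N}$ — is multiplied by a strictly superlinear power of $\|\varphi\|_\varepsilon\le r_{\varepsilon,y}$, and the surplus factors of $r_{\varepsilon,y}\varepsilon^{-N/2}$ drive the products to $0$. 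Hence, for $\varepsilon,\delta$ small, $\Phi_{\varepsilon,y}$ maps $B_{\varepsilon,y}$ into itself and is a contraction with constant $\le\frac{1}{2}$, so Banach's theorem provides a unique fixed point $\varphi_{\varepsilon,y}\in B_{\varepsilon,y}$; by construction it satisfies $\langle\partial_\varphi J_\varepsilon(y,\varphi_{\varepsilon,y}),\psi\rangle_\varepsilon=0$ for all $\psi\in E_{\varepsilon,y}$ and obeys $\|\varphi_{\varepsilon,y}\|_\varepsilon\le r_{\varepsilon,y}$, which is the stated estimate. (Feeding $\varphi_{\varepsilon,y}$ back into $\varphi_{\varepsilon,y}=\Phi_{\varepsilon,y}(\varphi_{\varepsilon,y})$ and bootstrapping even yields the sharper $\|\varphi_{\varepsilon,y}\|_\varepsilon\le C\varepsilon^{N/2}(\varepsilon^\alpha+|V(y)-V(x_0)|)$; the slack $\kappa$ is kept only for convenience in the sequel.)

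\textbf{Step 3: $C^1$ dependence on $y$, and the main difficulty.} The map $\Phi_{\varepsilon,y}$ depends on $y$ only through $U_{\varepsilon,y}$ and $P_{\varepsilon,y}$; by the polynomial decay of $U$ and its first two derivatives (Theorem \ref{Thm1.1}), $y\mapsto U_{\varepsilon,y}$ and $y\mapsto\partial_{y^i}U_{\varepsilon,y}$ are of class $C^1$ into $H_\varepsilon$, hence so is $y\mapsto P_{\varepsilon,y}$, and since $I_\varepsilon,R_\varepsilon\in C^2(H_\varepsilon)$ the map $(y,\varphi)\mapsto\Phi_{\varepsilon,y}(\varphi)$ is $C^1$. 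Being a uniform contraction, it falls under the uniform contraction principle — equivalently, the implicit function theorem applies to $\varphi-\Phi_{\varepsilon,y}(\varphi)=0$, whose partial differential in $\varphi$ at $\varphi_{\varepsilon,y}$ is $\mathrm{Id}-\Phi_{\varepsilon,y}'(\varphi_{\varepsilon,y})$ and is invertible because $\|\Phi_{\varepsilon,y}'(\varphi_{\varepsilon,y})\|\le\frac{1}{2}$ — so $y\mapsto\varphi_{\varepsilon,y}$ is $C^1$ with values in $H_\varepsilon$. The one delicate point is that the ambient space $E_{\varepsilon,y}$ itself varies with $y$; this I handle in the standard way, by conjugating, near an arbitrary reference point $y_*$, with the isomorphism $P_{\varepsilon,y}|_{E_{\varepsilon,y_*}}:E_{\varepsilon,y_*}\to E_{\varepsilon,y}$ (invertible for $|y-y_*|$ small, again thanks to the decay of $\partial_{y^i}U_{\varepsilon,y}$), thereby recasting the fixed-point problem on the fixed space $E_{\varepsilon,y_*}$ before differentiating. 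I expect the real obstacle to lie in Step 2: unlike in the fractional Schr\"odinger case, $R_\varepsilon$ contains the Kirchhoff quartic, which generates the negative $\varepsilon$-powers $\varepsilon^{-N/2}$ and $\varepsilon^{-N}$, while the pure-power term $|\varphi|^{p+1}$ is only slightly superlinear when $p$ is close to $1$; one must verify that along the ball of radius $r_{\varepsilon,y}\sim\varepsilon^{N/2}o(1)$ all of these remain strictly subordinate — in size and in Lipschitz constant — to the datum $l_\varepsilon$ and to the coercivity constant $\rho$ of Lemma \ref{Lem5.1}, uniformly for $y\in B_\delta(x_0)$.
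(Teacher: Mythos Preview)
Your proposal is correct and follows essentially the same route as the paper: recast $\partial_\varphi J_\varepsilon=0$ on $E_{\varepsilon,y}$ as the fixed-point equation $\varphi=-\mathcal L_\varepsilon^{-1}\bigl(l_\varepsilon+R_\varepsilon'(\varphi)\bigr)$ via Lemma \ref{Lem5.1}, run Banach's contraction theorem on the ball of radius $r_{\varepsilon,y}$ using Lemmas \ref{Lem4.4} and \ref{Lem4.5}, and then invoke a standard argument for the $C^1$ dependence on $y$. Your write-up is in fact somewhat more careful than the paper's --- you make the projection $P_{\varepsilon,y}$ explicit, justify $V(y)-V(x_0)\le (V(y)-V(x_0))^{1-\kappa}$, and address the variation of the constraint space $E_{\varepsilon,y}$ in the $C^1$ step --- whereas the paper simply cites Cao--Noussair--Yan for the latter.
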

\begin{proof}This existence of the mapping $y \mapsto \varphi_{\varepsilon, y}$ follows from the contraction mapping theorem. We construct a contraction map as follows.
Let $\varepsilon_{1}$ and $\delta_{1}$ be defined as in Lemma \ref{Lem5.1}. Let $\varepsilon_{0} \leq \varepsilon_{1}$ and $\delta_{0} \leq \delta_{1}$. We will choose $\varepsilon_{0}$ and $\delta_{0}$ later. Fix $y \in B_{\delta}(x_0)$ for $\delta<\delta_{0}$. Recall that
\begin{equation*}
J_{\varepsilon}(y, \varphi)=I_{\varepsilon}\left(U_{\varepsilon, y}\right)+l_{\varepsilon}(\varphi)+\frac{1}{2}\left\langle\mathcal{L}_{\varepsilon} \varphi, \varphi\right\rangle+R_{\varepsilon}(\varphi).
\end{equation*}
So we have
\begin{equation*}
\frac{\partial J_{\varepsilon}(\varphi)}{\partial \varphi}=l_{\varepsilon}+\mathcal{L}_{\varepsilon} \varphi+R_{\varepsilon}^{\prime}(\varphi).
\end{equation*}
Since $E_{\varepsilon, y}$ is a closed subspace of $H_{\varepsilon}$, Lemma \ref{Lem4.4} and \ref{Lem4.5} implies that $l_{\varepsilon}$ and $R_{\varepsilon}^{\prime}(\varphi)$ are bounded linear operators when restricted on $E_{\varepsilon, y} .$ So we can identify $l_{\varepsilon}$ and $R_{\varepsilon}^{\prime}(\varphi)$ with their representatives in $E_{\varepsilon, y}$. Then, to prove Lemma \ref{Lem5.2}, it is equivalent to find $\varphi \in E_{\varepsilon, y}$ that satisfies
\begin{equation}\label{eq5.3}
\varphi=\mathcal{A}_{\varepsilon}(\varphi) \equiv-\mathcal{L}_{\varepsilon}^{-1}\left(l_{\varepsilon}+R_{\varepsilon}^{\prime}(\varphi)\right).
\end{equation}
To solve \eqref{eq5.3}, we set
\begin{equation*}
\begin{aligned}
S_{\varepsilon}:=&\left\{\varphi \in E_{\varepsilon,y}:\|\varphi\|_{\varepsilon} \leq \varepsilon^{\frac{N}{2}+\alpha-\kappa}+\varepsilon^{\frac{N}{2}}\left|V\left(y\right)-V\left(x_{0}\right)\right|^{1-\kappa}\right\}
\end{aligned}
\end{equation*}
for $\kappa\in(0,\frac{\alpha}{2})$ sufficiently small.
We shall verify that $\mathcal{A}_{\varepsilon}$ is a contraction mapping from $S_{\varepsilon}$ to itself. For $\varphi \in S_{\varepsilon}$, by Lemma \ref{Lem4.4} and \ref{Lem4.5}, we obtain
\begin{equation*}
\begin{aligned}
\|\mathcal{A}_{\varepsilon}(\varphi) \| \leq & C\left(\left\|l_{\varepsilon}\right\|+\left\|R_{\varepsilon}^{\prime}(\omega)\right\|\right) \\
\leq & C\left\|l_{\varepsilon}\right\|+C \varepsilon^{-\frac{N}{2}}\|\varphi\|^{2} \\
\leq & C\left(\varepsilon^{\frac{N}{2}+\alpha}+\varepsilon^{\frac{N}{2}} \left|V\left(y\right)-V\left(x_{0}\right)\right|+\varepsilon^{-\frac{N}{2}}\left(\varepsilon^{N+2\alpha-2 \kappa}+\varepsilon^{N} \left|V\left(y\right)-V\left(x_{0}\right)\right|^{2(1-\kappa)}\right)\right) \\
\leq & C\left(\varepsilon^{\frac{N}{2}+\alpha-\kappa}+\varepsilon^{\frac{N}{2}} \left|V\left(y\right)-V\left(x_{0}\right)\right|^{1-\kappa}\right) .
\end{aligned}
\end{equation*}
Then, we get $\mathcal{A}_{\varepsilon}$ maps $S_{\varepsilon}$ to $S_{\varepsilon}$.
On the other hand, for any $\varphi_{1}, \varphi_{2} \in S_{\varepsilon}$
\begin{equation*}
\begin{aligned}
\left\|\mathcal{A}_{\varepsilon}\left(\varphi_{1}\right)-\mathcal{A}_{\varepsilon}\left(\varphi_{2}\right)\right\| &=\left\|L_{\varepsilon}^{-1} R_{\varepsilon}^{\prime}\left(\varphi_{1}\right)-L_{\varepsilon}^{-1} R_{\varepsilon}^{\prime}\left(\varphi_{2}\right)\right\| \\
& \leq C\left\|R_{\varepsilon}^{\prime}\left(\varphi_{1}\right)-R_{\varepsilon}^{\prime}\left(\varphi_{2}\right)\right\| \\
& \leq C\left\|R_{\varepsilon}^{\prime \prime}\left(\theta \varphi_{1}+(1-\theta) \varphi_{2}\right)\right\|\left\|\varphi_{1}-\varphi_{2}\right\|_{\varepsilon} \\
& \leq \frac{1}{2}\left\|\varphi_{1}-\varphi_{2}\right\|_{\varepsilon}.
\end{aligned}
\end{equation*}
So, $\mathcal{A}_{\varepsilon}$ is a contraction map from $S_{\varepsilon}$ to $S_{\varepsilon}$. Thus, there exists a contraction map $y \mapsto$ $\varphi_{\varepsilon, y}$ such that \eqref{eq5.3} holds.
\par
At last, we claim that the map $y \mapsto \varphi_{\varepsilon, y}$ belongs to $C^{1}$. Indeed, by similar arguments as that of Cao, Noussair and Yan \cite{MR1686700}, we can deduce a unique $C^{1}$-map $\tilde{\varphi}_{\varepsilon, y}: B_{\delta}(x_0) \rightarrow E_{\varepsilon, y}$ which satisfies \eqref{eq5.3}. Therefore, by the uniqueness $\varphi_{\varepsilon, y}=\tilde{\varphi}_{\varepsilon, y}$, and hence the claim follows.
\end{proof}

\subsection{Proof of Theorem \ref{Thm1.3}}
Let $\varepsilon_{0}$ and $\delta_{0}$ be defined as in Lemma \ref{Lem5.2} and let $\varepsilon<\varepsilon_{0}$. Fix $0<$ $\delta<\delta_{0}$. Let $y \mapsto \varphi_{\varepsilon, y}$ for $y \in B_{\delta}(x_0)$ be the map obtained in Lemma \ref{Lem5.2}. As aforementioned in {\bf Step 2} in Section 3, it is equivalent to find a critical point for the function $j_{\varepsilon}$ defined as in \eqref{eq4.1} by Lemma \ref{Lem4.2}. By the Taylor expansion, we have
\begin{equation*}
j_{\varepsilon}(y)=J\left(y, \varphi_{\varepsilon, y}\right)=I_{\varepsilon}\left(U_{\varepsilon, y}\right)+l_{\varepsilon}\left(\varphi_{\varepsilon, y}\right)+\frac{1}{2}\left\langle\mathcal{L}_{\varepsilon} \varphi_{\varepsilon, y}, \varphi_{\varepsilon, y}\right\rangle+R_{\varepsilon}\left(\varphi_{\varepsilon, y}\right).
\end{equation*}
We analyze the asymptotic behavior of $j_{\varepsilon}$ with respect to $\varepsilon$ first.
\par
By Lemma \ref{Lem4.4}, Lemma \ref{Lem4.5}, Lemma \ref{Lem4.7} and Lemma \ref{Lem5.2}, we have
\begin{equation}\label{eq5.4}
\begin{aligned}
j_{\varepsilon}(y)&=I_{\varepsilon}\left(U_{\varepsilon, y}\right)+O\left(\left\|l_{\varepsilon}\right\|\left\|\varphi_{\varepsilon}\right\|+\left\|\varphi_{\varepsilon}\right\|^{2}\right)\\
&=A \varepsilon^{N}+B \varepsilon^{N}\left( V\left(y\right)-V\left(x_{0}\right)\right)+ O(\varepsilon^{N})\left( \varepsilon^{\alpha-\kappa}+ \left(V\left(y\right)-V\left(x_{0}\right)\right)^{1-\kappa}\right)^2+O(\varepsilon^{N+\alpha})\\
\end{aligned}
\end{equation}
\par
Now consider the minimizing problem
\begin{equation*}
j_{\varepsilon}\left(y_{\varepsilon}\right) \equiv \inf _{y \in B_{\delta}(x_0)} j_{\varepsilon}(y).
\end{equation*}
Assume that $j_{\varepsilon}$ is achieved by some $y_{\varepsilon}$ in $B_{\delta}(x_0) .$ We will prove that $y_{\varepsilon}$ is an interior point of $B_{\delta}(x_0)$.
\par
To prove the claim, we apply a comparison argument. Let $e \in \mathbb{R}^{N}$ with $|e|=1$ and $\eta>1$. We will choose $\eta$ later. Let $z_{\epsilon}=\epsilon^{\eta} e \in B_{\delta}(0)$ for a sufficiently large $\eta>1$. By the above asymptotics formula, we have
\begin{equation*}
\begin{aligned}
j_{\epsilon}\left(z_{\epsilon}\right)=& A \epsilon^{N}+B \epsilon^{N}\left(V\left(z_{\epsilon}\right)-V(0)\right)+O\left(\epsilon^{N+\alpha}\right) \\
&+O\left(\epsilon^{N}\right)\left(\epsilon^{\alpha-\kappa}+\left(V\left(z_{\epsilon}\right)-V(0)\right)^{1-\kappa}\right)^{2}.
\end{aligned}
\end{equation*}
Applying the H\"{o}lder continuity of $V$, we derive that
\begin{equation*}
\begin{aligned}
j_{\epsilon}\left(z_{\epsilon}\right)=& A \epsilon^{N}+O\left(\epsilon^{N+\alpha \eta}\right)+O\left(\epsilon^{N+\alpha}\right) \\
&+O\left(\epsilon^{N}\left(\epsilon^{2(\alpha-\tau)}+\epsilon^{2 \eta \alpha(1-\kappa)}\right)\right) \\
=& A \epsilon^{N}+O\left(\epsilon^{N+\alpha}\right).
\end{aligned}
\end{equation*}
where $\eta>1$ is chosen to be sufficiently large accordingly. Note that we also used the fact that $\kappa\ll \alpha / 2$. Thus, by using $j\left(y_{\epsilon}\right) \leq j\left(z_{\epsilon}\right)$ we deduce
\begin{equation*}
B \epsilon^{N}\left(V\left(y_{\epsilon}\right)-V(0)\right)+O\left(\epsilon^{N}\right)\left(\epsilon^{\alpha-\kappa}+\left(V\left(y_{\epsilon}\right)-V(0)\right)^{1-\kappa}\right)^{2} \leq O\left(\epsilon^{N+\alpha}\right)
\end{equation*}
That is,
\begin{equation}\label{eq5.5}
B\left(V\left(y_{\epsilon}\right)-V(0)\right)+O(1)\left(\epsilon^{\alpha-\kappa}+\left(V\left(y_{\epsilon}\right)-V(0)\right)^{1-\kappa}\right)^{2} \leq O\left(\epsilon^{\alpha}\right)
\end{equation}
If $y_{\epsilon} \in \partial B_{\delta}(0)$, then by the assumption $(V_2)$, we have
\begin{equation*}
V\left(y_{\epsilon}\right)-V(0) \geq c_{0}>0
\end{equation*}
for some constant $0<c_{0} \ll 1$ since $V$ is continuous at $x=0$ and $\delta$ is sufficiently small. Thus, by noting that $B>0$ from Lemma \ref{Lem4.7} and sending $\epsilon \rightarrow 0$, we infer from \eqref{eq5.5} that
\begin{equation*}
c_{0} \leq 0.
\end{equation*}
We reach a contradiction. This proves the claim. Thus $y_{\epsilon}$ is a critical point of $j_{\epsilon}$ in $B_{\delta}(x_0)$. Then Theorem \ref{Thm1.3} now follows from the claim and Lemma \ref{Lem4.2}.

\section*{Acknowledgments}
The research of Vicen\c{t}iu D. R\u{a}dulescu was supported by a grant of
the Romanian Ministry of Research, Innovation and Digitization, CNCS/CCCDI-UEFISCDI,
project number PCE 137/2021, within PNCDI III.
The research of Zhipeng Yang was supported by the RTG 2491 "Fourier Analysis and Spectral Theory".

\bibliographystyle{plain}
\bibliography{yang}

\end{document}